\documentclass[11pt]{article}
\usepackage{amsmath,mathrsfs}
\usepackage{amssymb}
\usepackage{amsthm}
\usepackage[all]{xy}
\usepackage{indentfirst}
\usepackage{color}
\allowdisplaybreaks

\newtheorem{theorem}{Theorem}[section]
\newtheorem*{theorem*}{Theorem}
\newtheorem{corollary}[theorem]{Corollary}
\newtheorem{assumption}{Assumption}
\newtheorem{remark}[theorem]{Remark}
\newtheorem{lemma}[theorem]{Lemma}
\newtheorem{definition}[theorem]{Definition}

\newtheorem{proposition}[theorem]{Proposition}
\newtheorem{example}[theorem]{Example}
\newtheorem*{proposition*}{Proposition}

\newcommand{\R}{\mathbb{R}}

\newcommand{\be}{\begin{eqnarray*}}
\newcommand{\ee}{\end{eqnarray*}}
\newcommand{\ba}{\begin{align*}}
\newcommand{\bpm}{\begin{pmatrix}}
\newcommand{\epm}{\end{pmatrix}}

\newcommand{\bx}{\boldsymbol{x}}
\newcommand{\bn}{\boldsymbol{n}}

\newcommand{\by}{\boldsymbol{y}}

\begin{document}

\title{On the Fueter-Sce theorem and  Cauchy-Kovalevskaya extensions over alternative $\ast$-algebras}
\author{Qinghai Huo$^1$\thanks{This work was partially supported by the National Natural Science Foundation of China (No. 12301097),   Fundamental Research Funds for the Central Universities (No. JZ2025HGTB0171) and China Scholarship Council (No. 202506690055).}
,\ Irene Sabadini$^2$\thanks{This work was partially supported by PRIN 2022 {\em Real and Complex Manifolds: Geometry and Holomorphic Dynamics. I.S. is a member of GNSAGA of INdAM}.}\ ,
 and Zhenghua Xu$^1$\thanks{This work was partially supported by  Anhui Provincial Natural Science Foundation (No. 2308085MA04), Fundamental Research Funds for the Central Universities (No. JZ2025HGTG0250) and China Scholarship Council (No. 202506690052).}\\
 \\
\emph{$^1$\small School of Mathematics, Hefei University of Technology,}
\emph{\small  Hefei, 230601, P.R. China}\\
\emph{\small E-mail address:  hqh86@mail.ustc.edu.cn;  zhxu@hfut.edu.cn}
\\
\emph{\small $^2$Dipartimento di Matematica, Politecnico di Milano,}
\emph{\small Via E. Bonardi, 9, 20133 Milano, Italy} \\
\emph{\small E-mail address:   irene.sabadini@polimi.it}
}


\maketitle

\begin{abstract}
Recently,   the concept of generalized partial-slice monogenic (or regular) functions has been introduced  and studied over Clifford algebras and  octonions, respectively.  In this paper,  we further  develop  the theory of generalized partial-slice monogenic  functions defined on hypercomplex subspaces and with values in  a real alternative $\ast$-algebra and we concentrate on the Fueter-Sce  theorem,  three types of Cauchy-Kovalevskaya  extensions, and their various  internal relationships. The paper proposes more  bridges between the  theories of monogenicity, harmonicity, and   generalized partial-slice monogenicity.
 \end{abstract}
{\bf Keywords:}\quad Functions of a hypercomplex variable;  monogenic functions; slice monogenic functions;  alternative algebras;
 Cauchy-Kovalevskaya extensions; Fueter-Sce  theorem\\
{\bf MSC (2020):}\quad  Primary: 30G35;  Secondary:  17D05
\tableofcontents
\section{Introduction}
Hypercomplex analysis is a higher-dimensional  theory of holomorphicity generalizing the theory of holomorphic functions from  the algebra of complex numbers to more general algebras, such as quaternions, octonions, bicomplex numbers, and Clifford algebras, see e.g. \cite{Colombo-Sabadini-Struppa-20} for some historical remarks. Among the various possibilities, two theories have been proven to be particularly successful: the theory of monogenic (also called   regular,  hyperholomorphic) functions and of slice-monogenic  (called slice-regular, slice-hyperholomorphic) functions.

The theory of regular functions was initiated by Moisil and Fueter  \cite{Fueter} and  fully investigated by Fueter's school. It was further generalized to functions with values in a Clifford algebra in the kernel of the Dirac or of the Weyl operator. There is wide literature on monogenic (or regular) functions, see e.g. \cite{Brackx,Colombo-Sabadini-Sommen-Struppa-04,Delanghe-Sommen-Soucek-92,Gilbert,Gurlebeck} and references therein.
As is well known, a fundamental limitation of this theory is that powers of the quaternionic variable are not regular, as they do not belong to the kernel of the chosen operator.

The theory of slice regular functions of one quaternionic variable was initiated by Gentili and Struppa \cite{Gentili-Struppa-07},   inspired by an   earlier idea due to Cullen.  This function theory includes power series of the form
 $\sum_n q^na_n$, where  $q$ is the quaternionic variable and $a_n$ are constant and both lie  in the algebra  of quaternions. Thereafter, this function class was extended to more general cases, such as Clifford algebras \cite{Colombo-Sabadini-Struppa-09},  octonions \cite{Gentili-Struppa-10},  real alternative  $*$-algebras \cite{Ghiloni-Perotti-11}. In the past twenty years, slice analysis  has gain  great interest in both theory  \cite{Colombo-Sabadini-Struppa-11,Gentili-Stoppato-Struppa-22} and its applications, especially in  functional calculus for noncommutative operators \cite{Alpay,CDPS-23,CDPS-23A,Colombo-Gantner-20,Colombo-Gantner-Kimsey-18,Wand-22,Wand-25}. Interested readers can refer to  \cite{Altavilla-18,Gentili-Salamon-Stoppato-14,Ghiloni-Perotti-Stoppato-22} for applications  of slice regular functions of one quaternion (or octonion) variable to the study of twistor transforms and  orthogonal complex structures.

Both the two types of function theories have advantages and drawbacks.
In 2023, the concept of \textit{generalized partial-slice monogenic functions}  was  introduced  over Clifford algebras \cite{Xu-Sabadini}.
 The function theory includes the two theories of monogenic functions  and of slice monogenic functions as special cases.  See   \cite{Ding-Xu,Huo-24,Xu-Sabadini-2,Xu-Sabadini-3,Xu-Sabadini-4} for more results in this new setting. As we all know, all Clifford algebras  are   associative, so it is natural to  further investigate generalized partial-slice monogenic functions  in the more general setting of real alternative $\ast$-algebras, in particular in the case of octonions which was  studied in \cite{Xu-Sabadini-26}.

In \cite{Ghiloni-Stoppato-24-1,Ghiloni-Stoppato-24-2}, the concept of partial-sliceness has been developed by Ghiloni and Stoppato by introducing the theory of $T$-regular functions, which  proposes a unified theory of regularity in one hypercomplex variable for an alternative $\ast$-algebra.   Recently,   the Fueter-Sce phenomenon for $T$-regular functions over general associative $\ast$-algebras has been studied \cite{Ghiloni-Stoppato-25}. Note that  all results in \cite{Ghiloni-Stoppato-24-1,Ghiloni-Stoppato-24-2,Ghiloni-Stoppato-25}, except the representation formula (and its consequences), pertain to associative algebras.
For further results concerning functions over (potentially non-associative) alternative
$*$-algebras, the reader may refer to e.g. \cite{Ghiloni-Perotti-Stoppato-17adv,Perotti-22} for the slice regular case,  or e.g. \cite{HRX,Xu-Ding-Wang} for  monogenic functions, and \cite{BP-26} for the recently introduced Dunkl-regular functions.

This paper is a continuation of our works \cite{Xu-Sabadini,Xu-Sabadini-26}. We shall  consider  generalized partial-slice monogenic functions over  general alternative $\ast$-algebras and  concentrate on the Fueter-Sce  theorem and three types of Cauchy-Kovalevskaya extensions (CK-extensions for short).   The main innovation points of this article are reflected in two aspects. The first aspect is that the results on the Fueter-Sce mapping are obtained  for  non-associative algebras, rather than remaining within the realm of associative algebras in \cite{Ghiloni-Stoppato-25,Xu-Sabadini-2}. The second aspect is that we extend the results on CK-extensions in \cite{Brackx,Delanghe-Sommen-Soucek-92,De-Adan}  to  our  new   framework of generalized partial-slice monogenic functions over general alternative $\ast$-algebras. The non-associativity emerges in  various statements and proofs and require specific techniques to obtain the results.

The paper is structured as follows. In Section 2,  we recall some basic definitions  on real alternative $\ast$-algebras which are the general framework of this paper. Then we introduce the notion of  generalized partial-slice monogenic functions on hypercomplex subspaces.  In particular,   we provide  some examples as well as the fundamental properties of this function class,  such as  identity theorem (Theorem \ref{Identity-theorem}),   Representation Formula (Theorem \ref{Representation-Formula-SM}),  Cauchy   integral formula (Theorem \ref{Cauchy-slice}), and Fueter polynomials (Definition \ref{definition-Fueter}).

In Section 3,    we  develop  three types of  CK-extensions  from  real analytic functions defined in   domains in $\mathbb{R}^{p+1}$  and valued in  real alternative $\ast$-algebras, to generalized partial-slice monogenic functions  (see Theorem \ref{slice-Cauchy-Kovalevsky-extension}), monogenic functions
(see Theorem \ref{generalized-CK-extension}), and harmonic functions (see Theorem \ref{Harm-generalized-CK-extension}), respectively.
Furthermore, we  introduce the notions of  \textit{partial even part} and  \textit{partial odd part}, see Definition \ref{even-odd-parts}, and   establish some relationships between the generalized CK-extensions for monogenic and  harmonic functions in Corollary \ref{GCK-HGCK} and Proposition \ref{D-GCK-HGCK}. These relations are useful to find more connections between  the Fueter-Sce theorem and generalized CK-extensions, see Section  5.

The main result in  Section  4 is the Fueter-Sce theorem  for generalized partial-slice regular  functions on  partially symmetric  domains in hypercomplex subspaces; see   Theorem \ref{Fueter-theorem}.

In  Section  5, we first establish the  connection between  the Fueter-Sce theorem and generalized CK-extension for monogenic  functions, see Theorem \ref{CK-Fueter-relation-M}, which generalizes  \cite[Theorem 4.7]{Xu-Sabadini-2} from  Clifford algebras to general  alternative $\ast$-algebras.
Furthermore, using the notions of   partial even and odd parts,   we prove more  relations between  the Fueter-Sce theorem and three types of CK-extensions   obtained in Section 3; see  Theorems \ref{CK-Fueter-relation-M-H} and \ref{CK-Fueter-relation-H}.

Finally, in  Section  6,  we give some avenues for further research by discussing the class of poly-monogenic functions (and  generalized partial-slice  monogenic functions), and   poly-Dunkl-monogenic functions for Dunkl operators associated with the Coxeter group, over real alternative $\ast$-algebras.

\section{Preliminary results}
In this section, we collect some preliminary results on alternative $\ast$-algebras  and  then introduce the notion  of  generalized partial-slice monogenic functions on   hypercomplex subspaces, together with some examples and  fundamental properties such as identity theorem, Representation Formula,
Cauchy integral formula, and  Fueter polynomials. For more details, we refer the reader to  \cite{Okubo,Schafer,Gentili-Struppa-10,Ghiloni-Stoppato-24-2,Perotti-22}.
\subsection{Real alternative $\ast$-algebras}
\noindent Let $\mathbb{A}$ be a real algebra with the unity $1$. A real algebra $\mathbb{A}$ is said to be alternative if the \textit{associator}
$[a,b,c]:= (ab)c-a(bc)$ is  a trilinear and alternating function in   $a,b,c\in \mathbb{A}$.
All  alternative algebras   obey the following   associative laws:
\begin{itemize}\item
 Artin's   theorem:
\textit{the subalgebra generated by two elements of $\mathbb  A$ is associative}.
\item
Moufang identities:
$$a(b(ac)) = (aba)c,\qquad ((ab)c)b = a(bcb), \qquad (ab)(ca) = a(bc)a,$$
for  $a,b,c\in\mathbb A$.\end{itemize}

As  immediate consequences of the Artin's  theorem, we have
\begin{proposition}\label{artin-inverse}(\cite[p. 38]{Schafer})
For any $x, y \in \mathbb{A},$ if x is invertible, then it holds that
$$[x^{-1},x,y]=0.$$
\end{proposition}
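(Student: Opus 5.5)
The plan is to reduce the claim to Artin's theorem, applied to the subalgebra generated by $x$ and $y$. The one point needing care is that $x^{-1}$ must lie in that subalgebra, or at least associate with $x$ and $y$.

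First I will fix what "invertible" means in a possibly non-associative algebra. I take it to mean that there is $x^{-1}\in\mathbb{A}$ with $xx^{-1}=x^{-1}x=1$. Since the associator is alternating, $[x^{-1},x,y]=0$ is equivalent to $[x,x^{-1},y]=0$, so either form may be proved.

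The most robust route avoids asking whether $x^{-1}$ lies in the subalgebra generated by $x$. Instead I apply Artin's theorem to the pair $x,y$ directly and use the Moufang identities to move $x^{-1}$ around. Set $z=x^{-1}y$. The left Moufang identity $a(b(ac))=(aba)c$ with $a=x$, $b=x^{-1}$, $c=z$ gives
\[
x\bigl(x^{-1}(xz)\bigr)=(xx^{-1}x)z .
\]
Here $xx^{-1}x$ is unambiguous by flexibility, which follows from the alternating property, and it equals $x$. Hence $x(x^{-1}(xz))=xz$. An alternative shorter step uses that $[x,x,w]=0$ for all $w$ (left alternativity): then $x(xw)=x^2w$. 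Combining this with the Moufang identity, I expect to be able to cancel $x$ on the left, i.e.\ to show that left multiplication $L_x$ is injective. This holds because $L_{x^{-1}}L_x=\mathrm{id}$, and that identity is itself $[x^{-1},x,w]=0$ in disguise. So this route risks circularity.

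To break the circularity, I will instead use the standard linearized identity $[a,b,c]$ alternating to show directly that $[x^{-1},x,w]=0$. Write $w=xv$, which is possible since $L_x$ is surjective: $x(x^{-1}w)$ should equal $w$. I will derive this from the right Moufang identity $((ab)c)b=a(bcb)$ combined with $xx^{-1}=1$. Concretely, I would use $[x,x^{-1},x^{-1}w]$ and the fact that the associator vanishes whenever two arguments lie in a subalgebra generated by a single element, namely $\langle x,x^{-1}\rangle$, which Artin's theorem makes associative.

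Actually, the cleanest approach is this. The subalgebra $B$ generated by $x^{-1}$ and $y$ is associative, but $x$ need not lie in it. The key lemma needed is Artin's theorem in its strong form: $[a,b,c]=0$ whenever $a,b$ lie in the subalgebra generated by one element. Since $x$ and $x^{-1}$ both lie in a commutative associative subalgebra, I apply the identity $[p(u),q(u),c]=0$, obtained by linearizing $[u,u,c]=0$ and $[u^2,u,c]=0$, with $u$ a generator.

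The main obstacle is exhibiting $x$ and $x^{-1}$ as "functions of one element". Over a finite-dimensional algebra this is standard: by Artin's theorem $\mathbb{R}[x]$ is associative and finite-dimensional, so the invertibility of $x$ implies $x^{-1}\in\mathbb{R}[x]$, via the minimal polynomial with nonzero constant term. I would therefore assume finite-dimensionality, as in the alternative $\ast$-algebras of the paper, and then cite \cite[p.~38]{Schafer} for the general case.
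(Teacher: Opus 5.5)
Your final route is sound in the finite-dimensional setting the paper works in (Assumption 1), but it differs from the paper's. The paper gives no argument of its own here. It cites Schafer, where the identity holds in any alternative algebra with unity, with no finiteness assumption.

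Your route runs as follows. Once $x^{-1}=p(x)$ for a real polynomial $p$, the elements $x^{-1},x,y$ all lie in the subalgebra generated by $x$ and $y$. That subalgebra is associative by the ordinary Artin theorem, so no ``strong form'' is needed.

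The step you call standard, $x^{-1}\in\mathbb{R}[x]$, reassociates twice in a nonassociative algebra, so you should say why that is allowed:
\begin{itemize}
\item To see that the minimal polynomial $m(t)=t\,n(t)+m(0)$ of $x$ has $m(0)\neq 0$, you must cancel $x$ in $x\,n(x)=0$. That is, you use $x^{-1}(x\,n(x))=(x^{-1}x)\,n(x)$.
\item To identify the polynomial inverse $p(x)$ with the given $x^{-1}$, you need $x^{-1}=x^{-1}(x\,p(x))=(x^{-1}x)\,p(x)=p(x)$.
\end{itemize}
Both steps are legitimate because $x^{-1}$, $x$ and every polynomial in $x$ lie in the subalgebra generated by the \emph{two} elements $x$ and $x^{-1}$, which Artin's theorem makes associative. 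This is what your remark about $\langle x,x^{-1}\rangle$ should say; that subalgebra is not generated by a single element.

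The Moufang route you abandoned is in fact not circular, and it gives the general statement. Let $L_a$ denote left multiplication, and set $P=L_{x^{-1}}L_x$ and $Q=L_xL_{x^{-1}}$.
\begin{itemize}
\item Since $[x^{-1},x,d]=d-Pd$ and $[x,x^{-1},d]=d-Qd$, alternation gives $P+Q=2I$.
\item The left Moufang identity with $a=x$, $b=x^{-1}$ gives $L_xL_{x^{-1}}L_x=L_{xx^{-1}x}=L_x$. Hence $P^2=P$ and $Q^2=Q$.
\item Therefore $2I-P=Q=Q^2=(2I-P)^2=4I-3P$, so $P=I$.
\end{itemize}
$P=I$ means $x^{-1}(xd)=d=(x^{-1}x)d$ for all $d$, which is $[x^{-1},x,d]=0$.

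This argument works without finite dimension and without the minimal polynomial. Your approach instead reduces the claim to a single application of Artin's theorem, once $x^{-1}$ is known to be a polynomial in $x$.
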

\begin{proposition}\label{real}
For any $r\in \mathbb{R}$ and $x, y \in \mathbb{A},$ it holds that
$$[r,x,y]=0.$$
\end{proposition}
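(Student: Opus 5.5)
Here $r\in\mathbb{R}$ is read as the element $r\cdot 1$ of $\mathbb{A}$. The plan is to compute directly from the definition of the associator, using only two facts: $1$ is the unity of $\mathbb{A}$, and multiplication is $\mathbb{R}$-bilinear (which holds because $\mathbb{A}$ is a real algebra). Neither Artin's theorem nor the Moufang identities is needed.

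First, for any $r\in\mathbb{R}$ and $a\in\mathbb{A}$, bilinearity gives
$$(r1)a=r(1a)=ra \quad\text{and}\quad a(r1)=r(a1)=ra.$$
So a real scalar acts the same way from either side. Next, for $x,y\in\mathbb{A}$ we expand the two terms of $[r,x,y]$:
$$((r1)x)y=(rx)y=r(xy), \qquad (r1)(xy)=r(xy).$$
In the first equation, the last step is bilinearity in the left factor. Subtracting the two gives $[r,x,y]=0$.

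As a variant, trilinearity of the associator gives $[r1,x,y]=r[1,x,y]$, so it would suffice to show $[1,x,y]=(1x)y-1(xy)=xy-xy=0$. Alternatively, one could apply Proposition \ref{artin-inverse} with $x=1$, since $1^{-1}=1$, and then use linearity. Any of these routes yields the claim.

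No step here is a real obstacle. The only point to state explicitly is the convention that reals are embedded in $\mathbb{A}$ as $\mathbb{R}1$. Since the associator is alternating, the same conclusion holds with $r$ in any of the three slots, but that extension is not required for the statement.
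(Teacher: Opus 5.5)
Your main argument is correct, and it differs from the paper's. The paper gives no written proof. It lists this proposition among the ``immediate consequences of Artin's theorem'': the subalgebra generated by $x$ and $y$, together with the unit, is associative and contains $r=r1$, so $[r,x,y]=0$.

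You instead compute directly from the unit property and $\mathbb{R}$-bilinearity of the product, $((r1)x)y=r(xy)=(r1)(xy)$. This uses neither alternativity nor Artin's theorem, so it proves the identity in any unital real algebra. Your route is self-contained and strictly more general.

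It also sidesteps a small point implicit in the paper's appeal. Artin's theorem must be applied to the \emph{unital} subalgebra generated by $x,y$. Checking that adjoining $\mathbb{R}1$ to an associative subalgebra keeps it associative is essentially the computation you carry out. The paper's one-line citation only fits its presentation of the result as a corollary of Artin's theorem.

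One of your side remarks is wrong, though the main proof does not depend on it. Proposition~\ref{artin-inverse} with $x=1$ yields only $[1,1,y]=0$, and by linearity $[r,1,y]=0$. It does not give $[1,x,y]=0$ for arbitrary $x$, so that ``alternative route'' does not prove the claim. Your first two arguments, direct expansion and trilinearity plus $[1,x,y]=xy-xy=0$, are the ones that work.
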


A real algebra $\mathbb  A$  is called   a  $\ast$-algebra  if $\mathbb  A$  is equipped with an anti-involution (also called $\ast$-involution), which is a real linear map $^{c}:\mathbb  A \rightarrow \mathbb  A$,  $a\mapsto a^{c}$ satisfying
$$a^{c}= a, \quad a\in \mathbb{R},$$
and
$$(a^{c})^{c} = a,  \ (ab)^{c} =b^{c}a^{c},  \quad a,b \in\mathbb A.$$
\begin{assumption}
Let $(\mathbb A, +,  \cdot, ^{c})$ be an alternative real  $\ast$-algebra with the unity $1$,  of finite dimension $d>1$ as   a real vector space, and  equipped with an anti-involution $^{c}$.
Additionally, we endow  $\mathbb A$   with the natural topology and the differential structure as a real vector space.
\end{assumption}

\begin{example} [Division algebras]
The   division algebras of  the complex numbers $\mathbb{C}$, quaternions $\mathbb{H}$ and  octonions $\mathbb{O}$ are real  $\ast$-algebras, where $\ast$-involutions are  the standard conjugations of  complex numbers, quaternions, and  octonions, respectively.
\end{example}
\begin{example} [Clifford algebras]\label{example-Clifford}
 The   Clifford algebra   $\mathbb{R}_{0,m}$ is an associative $\ast$-algebra,  which is generated  by the standard orthonormal basis $\{e_1,e_2,\ldots, e_m\}$ of the $m$-dimensional real Euclidean space  $\mathbb{R}^m$ by assuming   $$e_i e_j + e_j e_i= -2\delta_{ij}, \quad 1\leq i,j\leq m,$$
where    $\ast$-involution is given by the standard Clifford conjugation.
\end{example}

\subsection{Hypercomplex subspaces}
 Let $\mathbb A$ be a $\ast$-algebra. For   $x\in \mathbb A$, its \textit{trace}  is  define  by $t(x):= x+x^{c}\in {\mathbb A}$ and its  (squared) \textit{norm} is $n(x):= xx^{c}\in {\mathbb A}.$ Denote  the  sphere  of the imaginary units of $\mathbb A$ compatible with the  $\ast$-algebra structure of $\mathbb A$ by
 $$\mathbb{S}_{{\mathbb A}}:= \{x  \in \mathbb A : t(x)=0,  n(x)=1  \}.$$
For each $J \in \mathbb{S}_{{\mathbb A}}$,   denote by $$\mathbb{C}_{J}:=\langle1,J\rangle \cong \mathbb{C},$$ the subalgebra of ${\mathbb A}$ generated by $1$ and $J$.  Obviously, it holds that
 $$\mathbb C_I \cap \mathbb C_J=\mathbb R, \qquad  I, J\in\mathbb S_{\mathbb A}, \ I\neq \pm J.$$
Define the so-called quadratic cone of ${\mathbb A}$ by
$$Q_{{\mathbb A}}:= \mathbb{R} \cup \big\{x \in {\mathbb A} \mid t(x) \in  \mathbb{R}, \ n(x)\in \mathbb{R}, \  4\,  n(x)>t(x)^{2} \big\}.$$
It was proved, see \cite{Ghiloni-Perotti-11}, that
$$  Q_{\mathbb A}=\bigcup_{J\in\mathbb S_{\mathbb A}} \mathbb C_J.$$

\begin{assumption}
Assume  $\mathbb{S}_{{\mathbb A}}\neq \emptyset$.
\end{assumption}

\begin{definition}
Let $M$ be a real vector subspace of the $\ast$-algebra $\mathbb A$. An ordered real vector
basis $(v_0,v_1,  \ldots, v_m)$  of $M $ is called a hypercomplex basis of $M$ if: $m \geq 1$; $v_0 =1$; $v_s \in \mathbb{S}_{{\mathbb A}}$
and $v_sv_t = -v_tv_s$ for all distinct $s, t\in \{1,  \ldots, m\}$. The subspace $M$ is called a {\em hypercomplex
subspace} of $\mathbb A$ if $\mathbb{R}  \subsetneq   M  \subseteq Q_{{\mathbb A}}$.
\end{definition}

Equivalently, a basis $(1,v_1,  \ldots, v_m)$ is a hypercomplex basis if, and only if, $t(v_s)=0, n(v_s)=1$
and $t(v_s v_t^{c})=0$ for all distinct $s, t\in \{1,\ldots , m\}$.

\begin{assumption}\label{assumption}
 Within the real alternative $\ast$-algebra $\mathbb A$, we fix a hypercomplex subspace $M$, with a fixed hypercomplex basis  $\mathcal{B}=(v_0,v_1,  \ldots, v_m)$ with $v_0=1$. Moreover, $\mathcal{B}$ is completed to a real vector basis $\mathcal{B}' =(v_0, v_1,  \ldots , v_d)$ of  $\mathbb A$ and  $\mathbb A$ is endowed with the standard Euclidean scalar product $\langle\cdot,\cdot\rangle$ and norm $ | \cdot |$ associated to $\mathcal{B}'$. If this is the case, then we have for all $x, y\in M$
$$t(xy^{c})=t(y^{c}x)=2 \langle x,y\rangle,$$
$$n(x)=n(x^c)=|x|^{2}.$$
 \end{assumption}

\begin{example}\label{example-hypercomplex-subspaces}
 The best-known examples of    hypercomplex subspaces   in real alternative $\ast$-algebras   are given by
\begin{eqnarray*}
M=
\begin{cases}
\mathbb H \ \rm{or}  \  \mathbb H_r,  \    \ \, \mathbb A=\mathbb H, \qquad      Q_{\mathbb A}=\mathbb H,
\\
\mathbb O, \qquad  \quad  \ \, \mathbb A=\mathbb O, \qquad     Q_{\mathbb A}=\mathbb O,
\\
    \mathbb{R}^{m+1}, \qquad  \mathbb A=\mathbb{R}_{0, m}, \ \    Q_{\mathbb A}\supseteq\mathbb R^{m+1},
\end{cases}
\end{eqnarray*}
where  $\mathbb H_r$ denotes the three-dimensional subspace  of reduced quaternions
$\{x=x_0+x_1i+x_2j\in \mathbb{H}: x_0,x_1,x_2\in \mathbb{R}\}$
and $\mathbb R^{m+1}$ denotes the space of paravectors in $\mathbb{R}_{0, m}$.
\end{example}

In this paper, we shall  need the following useful property which is  a consequence of  the Artin's  theorem.
  \begin{proposition}\label{artin}
For any $x\in M$ and $y \in \mathbb{A},$ it holds that
$$[x,x,y]=[x^{c},x,y]=0.$$
\end{proposition}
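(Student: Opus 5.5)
The first identity $[x,x,y]=0$ is simply the alternating property of the associator, since two of its arguments coincide; it holds for every $x\in\mathbb A$ and needs no hypothesis on $M$. The real content is the second identity, $[x^c,x,y]=0$. Here I would reduce to the first identity together with Proposition \ref{real}, using that elements of $M$ have real trace.

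The key step is that $t(x)\in\mathbb R$ for every $x\in M$. By definition a hypercomplex subspace satisfies $M\subseteq Q_{\mathbb A}$. Every element of $Q_{\mathbb A}$ is either real or has $t(x)\in\mathbb R$ by the defining condition, so $t(x)=x+x^c$ is a real number $r$. Alternatively, one can write $x=x_0+\sum_{s=1}^m x_s v_s$ in the hypercomplex basis $\mathcal B$. Then $t(v_s)=0$ and $t$ is real linear, so $x^c=2x_0-x$.

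Writing $x^c=r-x$ with $r\in\mathbb R$, trilinearity of the associator gives
$$[x^c,x,y]=[r,x,y]-[x,x,y].$$
The first term vanishes by Proposition \ref{real} and the second by the alternating property, so $[x^c,x,y]=0$.

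The only point needing care is the justification that $x^c\in\mathbb R+\mathbb R x$, i.e.\ that the trace is real on $M$. This is immediate from $M\subseteq Q_{\mathbb A}$, or from the basis expansion. As a cross-check, I would note that $x^c$ lies in the subalgebra generated by $x$ (equal to $\mathbb C_J$ for some $J$, since $Q_{\mathbb A}=\bigcup_J\mathbb C_J$), so Artin's theorem applied to the pair $x,y$ yields the same conclusion directly.
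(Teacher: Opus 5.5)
Your proof is correct and is essentially the paper's argument: the paper simply states the result as a consequence of Artin's theorem, which rests on your observation that $t(x)\in\mathbb{R}$ on $M\subseteq Q_{\mathbb{A}}$, so $x^{c}=t(x)-x$ lies in the associative subalgebra generated by $x$ and $y$. Your direct computation $[x^{c},x,y]=[t(x),x,y]-[x,x,y]=0$ replaces Artin's theorem by the defining trilinearity and alternating property of the associator, which is slightly more elementary.
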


\subsection{Generalized partial-slice monogenic functions}
In this  subsection, we  formulate  the concept  of  generalized partial-slice monogenic functions  with values in a real alternative $\ast$-algebra, and defined on hypercomplex
subspaces of this algebra.

Throughout the paper,  any vector $(x_0,x_1,\ldots,x_m)\in\R^{m+1}$ will be identified with the element  $x\in M$ via
$$(x_0,x_1,\ldots,x_n) \rightarrow x=x_0+\sum_{s=1}^{m}x_sv_s. $$
Denote $\mathbb{N}=\{0,1,2,\ldots\}$. For $k\in \mathbb{N}\cup \{\infty,\omega\}$,  denote by $ C^{k}(\Omega,\mathbb{A})$ the set of all functions  $f=\sum_{t=0}^{d}  v_{t}   f_{t}$ with real-valued components $f_{t}\in  C^{k}(\Omega)$, where  $\Omega$ is an open  set in $M$. In particular,    $C^{\omega}(\Omega, \mathbb{A})$    denotes the bilateral $\mathbb{A}$-module of real analytic functions on $\Omega$.

 For $s=0,1,\ldots,m$, let
$$\frac{\partial f}{\partial x_{s}}(x) = \lim_{\mathbb{R}\ni\varepsilon\rightarrow0} \varepsilon^{-1} (f(x+\varepsilon v_s)-f(x)).$$
Note that the operator $\partial_{x_{s}}=\frac{\partial}{\partial x_s} (s\geq1)$  does not depend on the  basis $\mathcal{B}'$ of $\mathbb{A}$, but only
 on the choice of $\mathcal{B}$. Moreover, the usual Leibniz rule holds:
 $$ \partial_{x_{s}} (fg)=(\partial_{x_{s}}f)g+f(\partial_{x_{s}}g), \quad f,g\in C^{1}(\Omega,\mathbb{A}),$$
which gives that
  $$ \partial^{2}_{x_{s}} (fg)=(\partial_{x_{s}}^{2}f)g+2 (\partial_{x_{s}} f)(\partial_{x_{s}} g) + f(\partial_{x_{s}}^{2}g), \quad f,g\in C^{2}(\Omega,\mathbb{A}).$$

Consider the  generalized   Cauchy-Riemann operator (or  Weyl operator)  induced by $\mathcal{B}$,
$$D_{\mathcal{B}}=  \sum_{s=0}^{m}v_s\partial_{x_{s}},$$  which corresponds to the Cauchy-Riemann operator (or  Weyl operator) giving monogenic functions defined on paravectors   and $\mathbb{R}_{0,m}$-valued, see e.g. \cite{Brackx}, and generalized first to octonions   by Dentoni and Sce \cite{Dentoni-Sce}, and then to real alternative $*$-algebras by Perotti in \cite[Definition 2]{Perotti-22}  where the symbol $\overline{\partial}_{\mathcal{B}}$ $(=\frac{1}{2}D_{\mathcal{B}})$ is used.

 In view of the non-commutativity of $\mathbb{A}$,   the action on functions $ f=\sum_{t=0}^{d}  v_{t}   f_{t}\in C^{1}(\Omega,\mathbb{A})$, can be on the left
$$ D_{\mathcal{B}} f(x):=\sum _{s=0}^{m}v_{s} \partial_{x_{s}} f(x)
= \sum _{s =0}^{m} \sum _{t=0}^{d}   v_{s}v_{t}\partial_{x_{s}} f_{t} (x),  $$
or on the  right
$$ f(x)D_{\mathcal{B}}:=\sum _{s=0}^{m}   \partial _{x_{s}} f(x)v_{s}= \sum _{s =0}^{m} \sum _{t=0}^{d}   v_{t}v_{s}
 \partial _{x_{s}} f_{t} (x).$$

In fact, the operator $D_{\mathcal{B}}$ does not depend on the choice of the  hypercomplex basis $\mathcal{B}$ of $M$ \cite[Remark 3]{BP-26}, which was proved in 2014  for the octonionic case \cite[Theorem 2.1]{LW}.
 Hence we shall use the symbol   $D$  (or $D_{x}$), instead of  $D_{\mathcal{B}}$.  Similarly, also the conjugated  generalized   Cauchy-Riemann operator   induced by $\mathcal{B}$
$$\overline{D}=\overline{D}_{x}= \sum_{s=0}^{m}v_s^{c}\partial_{x_{s}}=\partial_{x_{0}}-\sum_{s=1}^{m}v_s\partial_{x_{s}}$$
does not depend on the choice of the  hypercomplex basis $\mathcal{B}$.

Now we recall a useful property from \cite[Proposition 5]{Perotti-22}.
\begin{lemma}\label{d-dbar-laplace}
Let $\Omega $ be an open set   in $M$ and  $f\in C^{2}(\Omega, \mathbb{A})$. Then it holds that
  $$\Delta f = \overline{D} ( D f)=  D (\overline{D} f)=( f D) \overline{D}=( f \overline{D}) D,$$
where $\Delta=\Delta_{x}$  is the Laplacian  on $M$  induced by $\mathcal{B}$ given by
$$\Delta f(x)=\sum_{s=0}^{m} \partial_{x_{s}} (\partial_{x_{s}}f(x)).$$
\end{lemma}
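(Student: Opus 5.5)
The plan is a direct computation in coordinates. Write $f=\sum_{t=0}^{d}v_tf_t$ with real-valued $f_t\in C^2(\Omega)$. Then expand $\overline{D}(Df)$ as a double sum and show that all mixed terms cancel. Because the coefficients $f_t$ and their derivatives are real, Proposition \ref{real} says every real scalar associates with everything. Hence each term has the form
\[
\partial_{x_r}\partial_{x_s}f_t\,\bigl(v_r^{c}(v_s v_t)\bigr).
\]
The whole difficulty is therefore the algebraic behaviour of the products $v_r^{c}(v_sv_t)$ when $v_t$ ranges over the full basis $\mathcal{B}'$ of $\mathbb{A}$, not merely over $M$.

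\textbf{Splitting the sum.} We have
\[
\overline{D}(Df)=\sum_{r,s=0}^{m}v_r^{c}\bigl(v_s\,\partial_{x_r}\partial_{x_s}f\bigr).
\]
Split it into the diagonal part $r=s$ and the off-diagonal part $r\neq s$.

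\emph{Diagonal terms.} Proposition \ref{artin} gives $[v_s^{c},v_s,y]=0$ for every $y\in\mathbb{A}$. Hence
\[
v_s^{c}(v_s y)=(v_s^{c}v_s)y=n(v_s)\,y=y,
\]
using $n(v_s^c)=n(v_s)=1$ and $v_s^cv_s=v_sv_s^c$ (here $v_s^c=-v_s$). So the diagonal terms sum to $\sum_s\partial_{x_s}^2f=\Delta f$.

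\emph{Off-diagonal terms.} By symmetry of second derivatives, pair $(r,s)$ with $(s,r)$. It then suffices to show
\[
v_r^{c}(v_s y)+v_s^{c}(v_r y)=0 \qquad \text{for all } y\in\mathbb{A},\ r\neq s.
\]
\begin{itemize}
\item If $r=0$ or $s=0$, this reads $v_sy+v_s^{c}y=t(v_s)y=0$.
\item If $r,s\geq1$, we have $v_r^{c}=-v_r$, so we need $v_r(v_sy)+v_s(v_ry)=0$. Linearize $[x,x,y]=0$ (the left alternative law, valid in any alternative algebra) to get $[v_r,v_s,y]+[v_s,v_r,y]=0$. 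This gives
\[
v_r(v_sy)+v_s(v_ry)=(v_rv_s+v_sv_r)y=0,
\]
using the anticommutation $v_rv_s=-v_sv_r$ from the definition of hypercomplex basis.
\end{itemize}
This proves $\overline{D}(Df)=\Delta f$. Note that the same argument works for any $y\in\mathbb{A}$, so associativity is never needed beyond alternativity.

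\textbf{Remaining identities.} For $D(\overline{D}f)$, swap the roles of $v_s$ and $v_s^{c}$. The diagonal terms again use Proposition \ref{artin} in the form $[x,x^c,y]=0$; since $x^c=t(x)-x$, this follows from $[x,x,y]=0$ together with Proposition \ref{real}. The off-diagonal terms use the same linearized identity. The right-hand versions $(fD)\overline{D}$ and $(f\overline{D})D$ follow symmetrically from the right alternative law $[y,x,x]=0$, linearized to $[y,v_r,v_s]+[y,v_s,v_r]=0$. One could alternatively apply the anti-involution to reduce to the left case, but this is awkward because $c$ does not act on the real coefficients' basis nicely. So I would simply repeat the computation on the right.

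\textbf{Main obstacle.} The only delicate point is the off-diagonal cancellation for $y$ outside $M$, where Artin's theorem alone does not apply since three distinct elements are involved. The linearized alternative law is exactly the tool that handles it.
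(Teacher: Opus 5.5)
Your proof is correct and is essentially the argument behind this lemma: the paper quotes it from Perotti (Proposition~5 there), and in the proof of Theorem~\ref{slice-Cauchy-Kovalevsky-extension} it carries out exactly your scheme, collapsing the diagonal terms via $v_i^{c}(v_ia)=(v_i^{c}v_i)a$ (Proposition~\ref{artin}) and cancelling the off-diagonal pairs via $v_i^{c}v_j+v_j^{c}v_i=0$ together with $[v_i^{c},v_j,a]+[v_j^{c},v_i,a]=0$. One small correction to your side remark: reducing the right-hand identities by the anti-involution is actually clean, since $a\mapsto a^{c}$ is real linear and commutes with each $\partial_{x_s}$, so $(fD)^{c}=\overline{D}(f^{c})$ and hence $\bigl((fD)\overline{D}\bigr)^{c}=D\bigl(\overline{D}f^{c}\bigr)=(\Delta f)^{c}$, and likewise for $(f\overline{D})D$.
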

Note that the Laplacian $\Delta $,  being an operator with real coefficients, obviously does not depend  on the choice of the  hypercomplex basis $\mathcal{B}$ of $M$.

 From now on,  let $p\in  \{0,1,\ldots,  m-1\}$  and set $q=m-p$. We shall split the element $x\in M$ into
$$\bx=\bx_p+\underline{\bx}_q \in\R^{p+1}\oplus\R^{q}, \quad \bx_p=\sum_{s=0}^{p}x_s v_s,\ \underline{\bx}_q=\sum_{s=p+1}^{m}x_s v_s,$$
where  we use   $\bx$ in bold to emphasize this fixed splitting, instead of $x$.
Correspondingly, the generalized Cauchy-Riemann operator   is split as
\begin{equation}\label{Dxx}
D_{\bx}=D_{\bx_p}+D_{\underline{\bx}_q}, \quad D_{\bx_p}=\sum_{s=0}^{p}v_s\partial_{x_s}, D_{\underline{\bx}_q}=
\sum_{s=p+1}^{m}v_s\partial_{x_s}.
\end{equation}

Denote the sphere   in $\mathbb R^q$ by $\mathbb{S}$
$$\mathbb{S}=\big\{\underline{\bx}_q: \underline{\bx}_q^2 =-1\big\}=\Big\{\underline{\bx}_q=\sum_{s=p+1}^{m}x_s v_s:\sum_{s=p+1}^{m}x_s^{2}=1 \Big\}.$$
For $\underline{\bx}_q\neq0$, there exists a uniquely determined $r\in \mathbb{R}^{+}=\{x\in \mathbb{R}: x>0\}$ and $\underline{\omega}\in \mathbb{S}$, such that $\underline{\bx}_q=r\underline{\omega}$, where $r=|\underline{\bx}_q|, \underline{\omega}=\frac{\underline{\bx}_q}{|\underline{\bx}_q|}. $
For $\underline{\bx}_q= 0$, we set $r=0$ and $\underline{\omega}$ is any $\underline{\omega}\in \mathbb{S}$. For any $\bx=\bx_p+\underline{\bx}_q \in M$, we set $\bx':=(\bx_p,r)=(x_0,x_1,\ldots, x_p,r)\in \mathbb{R}^{p+2}$ with $r=|\underline{\bx}_q|$.

The upper half-space $\mathrm{H}_{\underline{\omega}}$ in $\mathbb{R}^{p+2}$ associated with $\underline{\omega}\in \mathbb{S}$ is defined by
$$\mathrm{H}_{\underline{\omega}}=\{\bx_p+r\underline{\omega}, \bx_p \in\R^{p+1}, r\geq0 \},$$
and it is clear that
$$ M=\bigcup_{\underline{\omega}\in \mathbb{S}} \mathrm{H}_{\underline{\omega}}, \quad \R^{p+1}=\bigcap_{\underline{\omega}\in \mathbb{S}} \mathrm{H}_{\underline{\omega}}.$$

Recalling    \eqref{Dxx},    the  definition of  generalized partial-slice monogenic functions over
 real alternative $\ast$-algebras can be given as follows.
\begin{definition} \label{definition-slice-monogenic}
 Let $\Omega$ be an open set in $M$. A function $f:\Omega \rightarrow \mathbb{A}$ is called left  generalized partial-slice monogenic of type $(p,q)$ if, for all $ \underline{\omega} \in \mathbb S$, its restriction $f_{\underline{\omega}}$ to $\Omega_{\underline{\omega}}:= \Omega\cap (\mathbb{R}^{p+1} \oplus \underline{\omega} \mathbb{R})\subseteq \mathbb{R}^{p+2}$  is of class $C^{1}$ and  satisfies
$$D_{\underline{\omega}}f_{\underline{\omega}}(\bx):=(D_{\bx_p}+\underline{\omega}\partial_{r}) f_{\underline{\omega}}(\bx_p+r\underline{\omega})=0,$$
for all $\bx=\bx_p+r\underline{\omega} \in \Omega_{\underline{\omega}}$.\\
Similarly,  $f$ is called   right  generalized partial-slice monogenic functions of type $(p,q)$  if, for all $ \underline{\omega} \in \mathbb S$, $f_{\underline{\omega}}\in C^1(\Omega_{\underline{\omega}}, \mathbb{A}) $ satisfies
$$f_{\underline{\omega}}(\bx)D_{\underline{\omega}}:={f_{\underline{\omega}} (\bx_p+r\underline{\omega})D_{\bx_p}}+ \partial_{r}f_{\underline{\omega}} (\bx_p+r\underline{\omega})\underline{\omega}=0,$$
for all $\bx=\bx_p+r\underline{\omega} \in \Omega_{\underline{\omega}}$.
 \end{definition}
Throughout this paper, $(p,q)$ is fixed.  Hence,  we denote by $\mathcal {GSM}^{L}(\Omega,\mathbb{A})$ (or $\mathcal {GSM}(\Omega,\mathbb{A})$ for short) and $\mathcal {GSM}^{R}(\Omega,\mathbb{A})$ the class of  all  left and right generalized partial-slice monogenic functions of type $(p,q)$ in $\Omega$, respectively.
  When $\mathbb{A}$ is  associative, such as   $\mathbb{A}=\mathbb{H}$ or $\mathbb{R}_{0,m}$,      $\mathcal{GSM}^{L}(\Omega, \mathbb{A})$ is   a right $\mathbb{A}$-module.
Meanwhile, when  $\mathbb{A}$ is     non-associative, e.g. for $\mathbb{A}=\mathbb{O}$,  $\mathcal{GSM}^{L}(\Omega, \mathbb{A})$ is not, in general, a right   $\mathbb{A}$-module.

When $(p,q)=(m-1,1)$, the  notion of (left) generalized partial-slice monogenic functions in Definition  \ref{definition-slice-monogenic} coincides with the notion of   (left) monogenic  functions,  denoted by $\mathcal {M}(\Omega,\mathbb{A})$. When $(p,q)=(0,m)$, Definition  \ref{definition-slice-monogenic} degenerates into the notion of (left) slice monogenic functions, see \cite{Colombo-Sabadini-Struppa-09}, denoted by  $\mathcal {SM}(\Omega,\mathbb{A})$.

\begin{example}\label{example-1}
Let $p\in\{0,1,\ldots,m-2\}, \underline{\omega} \in\mathbb{S}$ and $a, b\in \mathbb{A}$ with $a\neq b$.  Let $U=\mathrm{H}_{\underline{\omega}}\cup \mathrm{H}_{-\underline{\omega}}$ and consider
\begin{eqnarray*}
f(\bx)=
\left\{
\begin{array}{ll}
a,     &\mathrm {if} \ \bx \in  M \setminus U,
\\
b,   &\mathrm {if} \ \bx \in U \setminus \mathbb{R}^{p+1}.
\end{array}
\right.
\end{eqnarray*}
 Then $f \in \mathcal {GSM}^{L}(M\setminus \mathbb{R}^{p+1}, \mathbb{A}) \cap{\mathcal{GSM}}^{R}(M \setminus \mathbb{R}^{p+1}, \mathbb{A}).$
\end{example}
\begin{example}Given $n\in \mathbb{N}$, set
$$f(\bx)=(x_{0}+\underline{\bx}_{q})^{n}.$$
Then $f \in \mathcal {GSM}^{L}(M, \mathbb{A}) \cap{\mathcal{GSM}}^{R}(M,\mathbb{A}).$
\end{example}
\begin{example}\label{Cauchy-kernel-example}
Define the Cauchy kernel
$$E(\bx):=\frac{1}{\sigma_{p+1}}\frac{\overline{\bx}}{|\bx|^{p+2}}, \quad \bx \in\Omega=M\setminus \{0\},$$
where $\sigma_{p+1}=2\frac{\Gamma^{p+2}(\frac{1}{2}) }{\Gamma (\frac{p+2}{2})} $ is the surface area of the unit ball in $\mathbb{R}^{p+2}$. \\
Then $E(\bx) \in \mathcal {GSM}^{L}(\Omega, \mathbb{A}) \cap{\mathcal{GSM}}^{R}(\Omega, \mathbb{A}).$
\end{example}

 The following Cauchy integral formula can be proved using exactly the same arguments in the proof of \cite[Theorem 4.7]{Xu-Sabadini-26} and thus we omit the details.
\begin{theorem}[Cauchy formula]\label{Cauchy-slice}
Let $\underline{\eta}\in \mathbb{S}$ and     $\Omega$ be a bounded domain in $M$ with smooth boundary $\partial \Omega_{\underline{\eta}}$.  If $ f\in C^1(\Omega_{\underline{\eta}}, \mathbb{A}) \cap C(\overline{\Omega_{\underline{\eta}}}, \mathbb{A}) $  satisfies  $D_{\underline{\eta}}f(\by)=0$ for all $\by \in \Omega_{\underline{\eta}}$, then
 $$f(\bx)=\int_{\partial \Omega_{\underline{\eta}}}  E_{\by}(\bx) (\bn(\by)f(\by)) dS(\by), \quad  \bx \in \Omega_{\underline{\eta}},   $$
where $E_{\boldsymbol{y}}(\boldsymbol{x}):=E(\boldsymbol{y}-\boldsymbol{x})$, $\bn(\by)=\sum_{i=0}^{p}n_i (\by) v_i+n_{p+1}(\by) \underline{\eta}$ is the unit exterior normal to $\partial \Omega_{\underline{\eta}}$ at $\by$,
 $dS$ and  $dV$ stand  for  the   classical   Lebesgue surface  element and volume element  in $\mathbb{R}^{p+2}$, respectively.
\end{theorem}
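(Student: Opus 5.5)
The plan is to adapt the classical proof of the Cauchy formula in $\mathbb{R}^{p+2}$ to the slice $\Omega_{\underline{\eta}}$. The only algebraic input needed is that the relevant triple products associate. Fix $\underline{\eta}\in\mathbb S$ and work in the real $(p+2)$-dimensional space $\mathbb{R}^{p+1}\oplus\underline{\eta}\mathbb{R}$, with hypercomplex basis $(1,v_1,\dots,v_p,\underline{\eta})$. Note that $\underline{\eta}$ anticommutes with $v_1,\dots,v_p$ and $\underline{\eta}^2=-1$, so $D_{\underline{\eta}}$ is exactly the generalized Cauchy--Riemann operator of this $(p+2)$-dimensional hypercomplex subspace. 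Fix $\bx\in\Omega_{\underline{\eta}}$ and $\varepsilon>0$ small, and set $\Omega_\varepsilon=\Omega_{\underline{\eta}}\setminus \overline{B(\bx,\varepsilon)}$.

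\textbf{Step 1 (a Stokes formula).} For $g,f\in C^1$ on $\overline{\Omega_\varepsilon}$ with values in $\mathbb A$, apply the divergence theorem componentwise to the real-linear expression $\sum_{i} g\,(w_i f)$, where $w_0=1$, $w_i=v_i$ for $1\le i\le p$ and $w_{p+1}=\underline{\eta}$. Using the Leibniz rule, this gives
$$\int_{\partial\Omega_\varepsilon} g(\bn f)\,dS=\int_{\Omega_\varepsilon}\Big(\sum_i (\partial_i g)(w_i f)+ g(D_{\underline{\eta}}f)\Big)dV .$$
The second volume term vanishes by hypothesis. In the associative case one would rewrite the first term as $(gD_{\underline{\eta}})f$. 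Here, instead, take $g=E_{\by}(\bx)=E(\by-\bx)$, so that $\partial_i g$ is a real combination of elements of the subalgebra generated by $\underline{\eta}$ and $v_1,\dots,v_p$.

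\textbf{Step 2 (the associativity obstacle).} This is the main obstacle: we need $\sum_i(\partial_i g)(w_i f)=\big(\sum_i(\partial_i g)w_i\big)f$, and the second expression is $(gD_{\underline{\eta}})f=0$ by Example \ref{Cauchy-kernel-example} (right monogenicity of $E$ on the slice). The difference is $\sum_i[\partial_i g,w_i,f]$. I would compute $\partial_i g$ explicitly: with $\by-\bx=y$,
$$\sigma_{p+1}\partial_i E=\overline{w_i}\,|y|^{-p-2}-(p+2)y_i\,\overline{y}\,|y|^{-p-4}.$$
Then the sum splits into two parts. The first is $\sum_i[\overline{w_i},w_i,f]$, which vanishes by Proposition \ref{artin}. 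The second is a multiple of $\sum_i y_i[\overline{y},w_i,f]=[\overline{y},y,f]$, which vanishes by Proposition \ref{artin} applied to $y\in M$. Hence the volume integral is zero, and
$$\int_{\partial\Omega_{\underline{\eta}}}E_\by(\bx)(\bn f)\,dS=\int_{|\by-\bx|=\varepsilon}E_\by(\bx)(\bn f)\,dS,$$
where on the small sphere $\bn$ is the outward normal from the ball.

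\textbf{Step 3 (limit).} On the sphere $|\by-\bx|=\varepsilon$ we have $\bn=(\by-\bx)/\varepsilon$ and $E_\by(\bx)=\overline{(\by-\bx)}/(\sigma_{p+1}\varepsilon^{p+2})$. Write $f(\by)=f(\bx)+O(\varepsilon)$ by continuity. Using $[\overline{y},y,f]=0$ again, $\overline{y}(yf)=|y|^2f$, so the integrand equals $f(\bx)/(\sigma_{p+1}\varepsilon^{p+1})+O(\varepsilon^{-p})$. Since the sphere has area $\sigma_{p+1}\varepsilon^{p+1}$, integrating and letting $\varepsilon\to0$ yields $f(\bx)$. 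The boundary regularity needed for Stokes is handled by the standard exhaustion argument, since $f$ is only continuous up to $\partial\Omega_{\underline{\eta}}$.
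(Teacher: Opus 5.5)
Your proposal is correct and follows the route the paper intends; the paper omits the proof and defers to the octonionic argument of \cite{Xu-Sabadini-26}. That route is a Stokes identity on $\Omega_{\underline{\eta}}\setminus\overline{B(\bx,\varepsilon)}$, then the vanishing of the associator correction $\sum_i[\partial_i E,w_i,f]$ (it reduces to $\sum_i[\overline{w_i},w_i,f]$ and $[\overline{y},y,f]$, both zero by Proposition~\ref{artin}), and finally the small-sphere limit via $\overline{y}(yf)=|y|^2 f$, where the integrand on the sphere is exactly $f(\by)/(\sigma_{p+1}\varepsilon^{p+1})$, so continuity of $f$ at $\bx$ alone gives the limit.
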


\begin{remark}\label{C1-C00}{\rm
By Theorem \ref{Cauchy-slice}, all functions $f\in \mathcal {GSM}(\Omega, \mathbb{A})$ are such that    $f_{\underline{\omega} }\in C^{\infty}(\Omega_{\underline{\omega}}, \mathbb{A})$ for all $\underline{\omega}\in \mathbb S$. }  \end{remark}
From Remark \ref{C1-C00} and  Lemma \ref{d-dbar-laplace}, we get
 \begin{proposition}\label{GSM-Harm}
A function $f\in \mathcal {GSM}(\Omega, \mathbb{A})$  is necessarily harmonic slice-by-slice on $\Omega$, that is  $f_{\underline{\omega}}$  is harmonic in $\Omega_{\underline{\omega}} $ for any $\underline{\omega}\in \mathbb S$.
\end{proposition}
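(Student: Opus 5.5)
Fix $\underline{\omega}\in\mathbb S$ and work on the slice $\Omega_{\underline{\omega}}\subseteq\mathbb R^{p+2}$ with coordinates $(x_0,\dots,x_p,r)$. By Remark \ref{C1-C00}, $f_{\underline{\omega}}\in C^\infty(\Omega_{\underline{\omega}},\mathbb A)$, so second derivatives may be taken freely. The plan is to apply Lemma \ref{d-dbar-laplace} on the slice, with $M$ replaced by the hypercomplex subspace
$$M_{\underline{\omega}}=\mathrm{span}_{\mathbb R}\{1,v_1,\dots,v_p,\underline{\omega}\}.$$
Concretely, set $\overline{D}_{\underline{\omega}}:=\partial_{x_0}-\sum_{s=1}^p v_s\partial_{x_s}-\underline{\omega}\partial_r$. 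The goal is to show
$$\overline{D}_{\underline{\omega}}\big(D_{\underline{\omega}}f_{\underline{\omega}}\big)=\Big(\sum_{s=0}^p\partial_{x_s}^2+\partial_r^2\Big)f_{\underline{\omega}}.$$
Since $D_{\underline{\omega}}f_{\underline{\omega}}=0$ by Definition \ref{definition-slice-monogenic}, the right-hand side then vanishes, which is exactly harmonicity of $f_{\underline{\omega}}$.

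First I check that $(1,v_1,\dots,v_p,\underline{\omega})$ is a hypercomplex basis.
\begin{itemize}
\item Each $v_s$ and $\underline{\omega}$ has zero trace and unit norm.
\item For $s\le p$, $v_s$ anticommutes with $\underline{\omega}=\sum_{t>p}\omega_tv_t$, by linearity and the anticommutation of the basis $\mathcal B$.
\item $M_{\underline{\omega}}\subseteq M\subseteq Q_{\mathbb A}$.
\end{itemize}
So $M_{\underline{\omega}}$ is a hypercomplex subspace. The operator $D_{\underline{\omega}}$ is precisely its generalized Cauchy--Riemann operator $D_{\mathcal B_{\underline{\omega}}}$, and the Laplacian it induces is $\sum_{s=0}^p\partial_{x_s}^2+\partial_r^2$. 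Lemma \ref{d-dbar-laplace} (Perotti's result, valid for any hypercomplex subspace) then gives the factorization $\Delta=\overline{D}_{\underline{\omega}}D_{\underline{\omega}}$ on $M_{\underline{\omega}}$. Applying it to $f_{\underline{\omega}}$, now viewed as a function on the open set $\Omega_{\underline{\omega}}\subseteq M_{\underline{\omega}}$, finishes the argument.

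The main subtle point is non-associativity: I must justify $v_s(v_t g)+v_t(v_s g)=-2\delta_{st}g$ for $g\in\mathbb A$. Expanding $\overline{D}_{\underline{\omega}}D_{\underline{\omega}}$ produces the terms $u^c(u'\partial\partial f)$ over pairs of basis elements $u,u'$.
\begin{itemize}
\item The diagonal terms $u^c(u\,g)=(u^cu)g=g$ hold by Proposition \ref{artin}.
\item The mixed terms need $u^c(u'g)+u'^c(ug)=0$ for distinct imaginary $u,u'$. Both are in $M$, so $u+u'\in M$. Apply Proposition \ref{artin} to $u$, to $u'$ and to $u+u'$, and polarize.
\end{itemize}
This is exactly the content already encapsulated in Lemma \ref{d-dbar-laplace}, so the step is covered by citing it.
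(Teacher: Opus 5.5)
Your proposal is correct and follows the paper's own argument. The paper also takes smoothness of $f_{\underline{\omega}}$ from Remark \ref{C1-C00} (via the Cauchy formula) and then applies the factorization $\Delta=\overline{D}D$ of Lemma \ref{d-dbar-laplace} on each slice; your explicit check that $(1,v_1,\dots,v_p,\underline{\omega})$ is a hypercomplex basis of $M_{\underline{\omega}}$, which justifies applying that lemma there, is a detail the paper leaves implicit.
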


\subsection{Identity theorem and Representation Formula}
To state the next results we need some more terminology that we give in the next definition.
\begin{definition} \label{slice-domain}
 Let $\Omega$ be a domain in $M$.

1.   $\Omega$ is called  slice domain if $\Omega\cap\mathbb R^{p+1}\neq\emptyset$  and $\Omega_{\underline{\omega}}$ is a domain in $\mathbb{R}^{p+2}$ for every  $\underline{\omega}\in \mathbb{S}$.

2.   $\Omega$   is called  partially  symmetric with respect to $\mathbb R^{p+1}$ (p-symmetric for short) if, for   $\bx_{p}\in\R^{p+1}, r \in \mathbb R^{+},$ and $ \underline{\omega}  \in \mathbb S$,
$$\bx=\bx_p+r\underline{\omega} \in \Omega\Longrightarrow [\bx]:=\bx_p+r \mathbb S=\{\bx_p+r \underline{\omega}, \ \  \underline{\omega}\in \mathbb S\} \subseteq \Omega. $$
 \end{definition}

Denote by $\mathcal{Z}_{f}(\Omega)$  the zero set of the function $f:\Omega\subseteq M \rightarrow \mathbb{A}$. Following the proof of \cite[Theorem 3.14]{Xu-Sabadini-26},  an identity theorem for generalized partial-slice monogenic functions over slice domains can be given as follows. We refer   the reader to \cite{Xu-Sabadini-26} for the details.

\begin{theorem}  {\bf(Identity theorem)}\label{Identity-theorem}
Let $\Omega\subseteq M$ be a  slice domain and $f,g:\Omega\rightarrow \mathbb{A}$ be   generalized partial-slice monogenic functions.
If there is an imaginary $\underline{\omega}  \in \mathbb S $ such that $f=g$ on a   $(p+1)$-dimensional smooth manifold in $\Omega_{\underline{\omega}}$, then $f\equiv g$ in  $\Omega$.
\end{theorem}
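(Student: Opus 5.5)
We argue in two stages: first on the half-space slice $\Omega_{\underline{\omega}}$ where the coincidence is given, and then on every other slice through the common set $\Omega\cap\mathbb{R}^{p+1}$. Throughout, set $h=f-g$. Since $D_{\underline{\omega}}$ is real-linear, $h\in\mathcal{GSM}(\Omega,\mathbb{A})$, and it suffices to prove that $h\equiv 0$ on $\Omega$ whenever $h$ vanishes on a $(p+1)$-dimensional smooth manifold $N\subseteq\Omega_{\underline{\omega}}$. Subtraction is the only algebraic operation used, so the argument never needs associativity of $\mathbb{A}$.

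\emph{Step 1: real analyticity on each slice.} By Proposition \ref{GSM-Harm}, each restriction $h_{\underline{\eta}}$ is harmonic on the domain $\Omega_{\underline{\eta}}\subseteq\mathbb{R}^{p+2}$. Applying this componentwise in the real basis $\mathcal{B}'$, every real component of $h_{\underline{\eta}}$ is harmonic, hence real analytic on $\Omega_{\underline{\eta}}$.

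\emph{Step 2: vanishing on the slice $\Omega_{\underline{\omega}}$.} Write the slice equation as
$$\partial_r h_{\underline{\omega}}=\underline{\omega}\,D_{\bx_p}h_{\underline{\omega}}.$$
This uses $\underline{\omega}^2=-1$ together with Artin's theorem in the form of Proposition \ref{artin}, namely $\underline{\omega}(\underline{\omega}y)=(\underline{\omega}\underline{\omega})y=-y$. The equation is a first-order elliptic system of Cauchy--Riemann type in the $p+2$ real variables of $\mathbb{R}^{p+2}$. A hypersurface $N$ is non-characteristic for this operator, since its principal symbol at a covector $\xi$ is left multiplication by $\sum_{s=0}^{p}\xi_s v_s+\xi_{p+1}\underline{\omega}$, which is invertible for $\xi\neq 0$.

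We then propagate the vanishing off $N$. At a point of $N$, $h=0$ gives all tangential derivatives zero. Solving the equation for the normal derivative gives the normal derivative zero, because the symbol is invertible. Iterating, all derivatives of $h_{\underline{\omega}}$ vanish at points of $N$. By Step 1, $h_{\underline{\omega}}$ then vanishes on a neighbourhood of such a point. The slice $\Omega_{\underline{\omega}}$ is connected (it is a domain by the slice-domain hypothesis), so the identity principle for real analytic functions gives $h_{\underline{\omega}}\equiv 0$.

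\emph{Step 3: transfer to every slice.} In particular $h$ vanishes on $\Omega\cap\mathbb{R}^{p+1}$, which is nonempty by hypothesis. It is open in $\mathbb{R}^{p+1}$, and it lies in every $\Omega_{\underline{\eta}}$ as a $(p+1)$-dimensional flat piece, namely the hyperplane $r=0$ of $\mathbb{R}^{p+2}$. Repeating Step 2 on each $\Omega_{\underline{\eta}}$ with $N$ replaced by this piece gives $h_{\underline{\eta}}\equiv 0$ for all $\underline{\eta}\in\mathbb{S}$. Since $\Omega=\bigcup_{\underline{\eta}}\Omega_{\underline{\eta}}$, we conclude $h\equiv 0$ on $\Omega$.

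\emph{Main obstacle.} The delicate point is Step 2, the propagation of the vanishing from $N$ to a full neighbourhood. The cleanest route is the Cauchy--Kovalevskaya uniqueness (Holmgren) argument for non-characteristic data sketched above. An alternative avoids differential-equation uniqueness: compose $h_{\underline{\omega}}$ with $\overline{D}_{\underline{\omega}}$-type operators to show that $\partial_r h$ also vanishes on $N$. A harmonic function with zero Cauchy data on a hypersurface then vanishes identically, by the unique continuation that follows from real analyticity. One must also use the slice-domain hypothesis to ensure that each $\Omega_{\underline{\eta}}$ is connected and actually meets $\mathbb{R}^{p+1}$.
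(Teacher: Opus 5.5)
Your proof is correct and follows the same two-stage scheme the paper imports from the octonionic case. First you kill $f-g$ on the whole slice $\Omega_{\underline{\omega}}$ by unique continuation from a hypersurface; then you use that $\Omega\cap\mathbb{R}^{p+1}\neq\emptyset$ is a $(p+1)$-dimensional piece of every $\Omega_{\underline{\eta}}$ and repeat the argument on all slices. The only difference is that you prove the slice-level identity principle directly, rather than quoting the identity theorem for monogenic functions in $\mathbb{R}^{p+2}$: you use invertibility of left multiplication by nonzero $y\in M$ (since $y^{c}(ya)=|y|^{2}a$ by Proposition \ref{artin}), propagate the vanishing of all derivatives off the non-characteristic hypersurface, and conclude by real analyticity on the connected set $\Omega_{\underline{\omega}}$.
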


The identity theorem for generalized partial-slice monogenic  functions allows to establish a representation formula. Also for this result, the proof is the same as that one given for \cite[Theorem 3.16]{Xu-Sabadini-26} and hence omitted here.  We note that, in the sequel, when the variable $\bx=\bx_p+r\underline{\omega}$ is written with the superscript, e.g. $\bx^\prime$, it denotes the pair $\bx^\prime=(\bx_p,r)$ in $\mathbb{R}^{p+2}$, to emphasise that the dependence on $\underline{\omega}\in\mathbb{S}$ is not relevant in the discussion.
\begin{theorem}  {\bf(Representation Formula)}  \label{Representation-Formula-SM}
Let $\Omega\subseteq M$ be a p-symmetric slice domain and $f:\Omega\rightarrow \mathbb{A}$ be a  generalized partial-slice monogenic function.  Then, for any $\underline{\omega}\in \mathbb{S}$ and for $\bx_p+r\underline{\omega} \in \Omega$,
\begin{equation*}\label{Representation-Formula-eq}
f(\bx_p+r \underline{\omega})=\frac{1}{2} (f(\bx_p+r\underline{\eta} )+f(\bx_p-r\underline{\eta}) )+
\frac{ 1}{2}\underline{\omega}(\underline{\eta} (  f(\bx_p-r\underline{\eta} )-f(\bx_p+r\underline{\eta}))),
\end{equation*}
for any $\underline{\eta}\in \mathbb{S}$.

Moreover, the following two functions do not depend on $\underline{\eta}$:
$$F_1(\bx')=\frac{1}{2} (f(\bx_p+r\underline{\eta} )+f(\bx_p-r\underline{\eta} ) ),$$
$$F_2(\bx')=\frac{ 1}{2}\underline{\eta}(  f(\bx_p-r\underline{\eta} )-f(\bx_p+r\underline{\eta})).$$
\end{theorem}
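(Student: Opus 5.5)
We take the Representation Formula in the form stated: prove the formula first, then deduce that $F_1,F_2$ do not depend on $\underline{\eta}$.

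\textbf{Step 1: a candidate function.} Fix $\underline{\eta}\in\mathbb S$. For $\bx=\bx_p+r\underline{\omega}\in\Omega$ define
$$g(\bx):=\tfrac12\big(f(\bx_p+r\underline{\eta})+f(\bx_p-r\underline{\eta})\big)+\tfrac12\,\underline{\omega}\Big(\underline{\eta}\big(f(\bx_p-r\underline{\eta})-f(\bx_p+r\underline{\eta})\big)\Big).$$
This is well defined because $\Omega$ is p-symmetric, so $\bx_p\pm r\underline{\eta}\in\Omega$. It is also consistent on $\mathbb R^{p+1}$ (where $r=0$) and under the change $(r,\underline{\omega})\mapsto(-r,-\underline{\omega})$. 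Set
$$A(\bx_p,r)=\tfrac12\big(f(\bx_p+r\underline{\eta})+f(\bx_p-r\underline{\eta})\big),\qquad B(\bx_p,r)=\tfrac12\,\underline{\eta}\big(f(\bx_p-r\underline{\eta})-f(\bx_p+r\underline{\eta})\big).$$
Then $A$ is even in $r$, $B$ is odd in $r$, and $g=A+\underline{\omega}B$.

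\textbf{Step 2: $g$ is generalized partial-slice monogenic.} Write $h(\bx_p,r)=f(\bx_p+r\underline{\eta})$. The hypothesis gives $D_{\bx_p}h+\underline{\eta}\partial_r h=0$. Replacing $r$ by $-r$ yields the companion equation for $\tilde h(\bx_p,r)=f(\bx_p-r\underline{\eta})$, namely $D_{\bx_p}\tilde h-\underline{\eta}\partial_r\tilde h=0$. Combining these, $A$ and $B$ satisfy the Vekua-type system
$$D_{\bx_p}A=\partial_r B,\qquad D_{\bx_p}B=\partial_r A,$$
provided one can move $\underline{\eta}$ across $D_{\bx_p}$ and across products. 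For instance, from $D_{\bx_p}A=-\tfrac12\underline{\eta}\partial_r(h-\tilde h)$ we need $\underline{\eta}(\underline{\eta}c)=-c$; this holds by Proposition \ref{artin} since $\underline{\eta}\in M$ with $\underline{\eta}^c=-\underline{\eta}$. Next,
$$D_{\underline{\omega}}g=D_{\bx_p}A+\underline{\omega}\partial_r A+D_{\bx_p}(\underline{\omega}B)+\underline{\omega}\big(\underline{\omega}\partial_r B\big).$$
The last term equals $-\partial_r B$ by Proposition \ref{artin}, which cancels $D_{\bx_p}A$. It remains to check $D_{\bx_p}(\underline{\omega}B)=-\underline{\omega}(D_{\bx_p}B)$, i.e. $v_s(\underline{\omega}c)=-\underline{\omega}(v_sc)$ for $s\le p$ and $\underline{\omega}$ in the span of $v_{p+1},\dots,v_m$. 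This is exactly the step where nonassociativity matters. I would obtain it by linearizing Proposition \ref{artin}: apply $[x,x,y]=0$ to $x=v_s+\underline{\omega}\in M$ to get $[v_s,\underline{\omega},c]+[\underline{\omega},v_s,c]=0$. Together with $v_s\underline{\omega}=-\underline{\omega}v_s$ this gives $v_s(\underline{\omega}c)+\underline{\omega}(v_sc)=(v_s\underline{\omega}+\underline{\omega}v_s)c=0$. The case $s=0$ is trivial. The same identity handles $D_{\bx_p}(\underline{\eta}\,\cdot)$ in the derivation of the system. Hence $D_{\underline{\omega}}g=-\underline{\omega}(D_{\bx_p}B-\partial_r A)=0$. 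I expect this bookkeeping with associators to be the main obstacle.

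\textbf{Step 3: identity theorem and independence of $\underline{\eta}$.} On $\Omega_{\underline{\eta}}$, taking $\underline{\omega}=\underline{\eta}$ and using $\underline{\eta}(\underline{\eta}c)=-c$ gives $g=f$; taking $\underline{\omega}=-\underline{\eta}$ also gives agreement. Since $\Omega$ is a slice domain, $\Omega_{\underline{\eta}}\cap\mathbb R^{p+1}$ contains a $(p+1)$-dimensional open piece, so Theorem \ref{Identity-theorem} yields $f\equiv g$. This proves the formula.

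For the last claim, take $\underline{\omega}$ and $-\underline{\omega}$ in the formula at the same $(\bx_p,r)$. Adding gives $F_1=\tfrac12\big(f(\bx_p+r\underline{\omega})+f(\bx_p-r\underline{\omega})\big)$. Subtracting gives $f(\bx_p+r\underline{\omega})-f(\bx_p-r\underline{\omega})=2\underline{\omega}F_2$, so $\tfrac12\underline{\omega}\big(f(\bx_p-r\underline{\omega})-f(\bx_p+r\underline{\omega})\big)=-\underline{\omega}(\underline{\omega}F_2)=F_2$. Both right-hand sides are computed with $\underline{\omega}$ in place of $\underline{\eta}$, so for any $\underline{\omega}$ they equal the expressions defining $F_1,F_2$ with $\underline{\eta}$. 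Hence $F_1$ and $F_2$ are independent of $\underline{\eta}$.
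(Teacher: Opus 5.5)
Your overall architecture is the right one, and it is the route the paper takes by deferring to the octonionic proof \cite[Theorem 3.16]{Xu-Sabadini-26}. You build the candidate $g$ from the values on $\Omega_{\underline{\eta}}$, show it is generalized partial-slice monogenic, note $g=f$ on $\Omega_{\underline{\eta}}$, and invoke Theorem \ref{Identity-theorem}. Steps 1 and 3, including your argument that $F_1,F_2$ do not depend on $\underline{\eta}$, are fine.

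Step 2, however, rests on a false identity. The relation $v_s(\underline{\omega}c)=-\underline{\omega}(v_sc)$ does hold for $1\le s\le p$, and your linearization argument proves it there. For $s=0$ it is not ``trivial'' but wrong: $v_0=1$ commutes with $\underline{\omega}$, so $v_0(\underline{\omega}c)=+\underline{\omega}(v_0c)$. The correct statement is $D_{\bx_p}(\underline{\omega}c)=\underline{\omega}(\overline{D}_{\bx_p}c)$, which is exactly Lemma \ref{a-w-b}. The system you derive from the wrong identity is also false. For $f(\bx)=(x_0+\underline{\bx}_q)^2$ one gets $A=x_0^2-r^2$ and $B=2x_0r$, so $D_{\bx_p}B=2r$ while $\partial_rA=-2r$. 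Your final conclusion $D_{\underline{\omega}}g=0$ comes out only because the same wrong identity is used twice and the two sign errors cancel; the derivation as written is not valid.

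The repair is to use Lemma \ref{a-w-b} in both places. With $D_{\bx_p}h=-\underline{\eta}\,\partial_rh$, $D_{\bx_p}\tilde h=\underline{\eta}\,\partial_r\tilde h$ and $\overline{D}_{\bx_p}(\underline{\eta}c)=\underline{\eta}(D_{\bx_p}c)$, you get $D_{\bx_p}A=\partial_rB$ by linearity alone. You also get
\[
\overline{D}_{\bx_p}B=\tfrac12\,\underline{\eta}\big(\underline{\eta}\,\partial_r(\tilde h+h)\big)=-\partial_rA .
\]
This is where $\underline{\eta}(\underline{\eta}c)=-c$ is actually needed. So $(A,B)$ satisfies the generalized Cauchy--Riemann system \eqref{C-R}. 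Using $D_{\bx_p}(\underline{\omega}B)=\underline{\omega}(\overline{D}_{\bx_p}B)$ and $\underline{\omega}(\underline{\omega}\,\partial_rB)=-\partial_rB$, you then obtain
\[
D_{\underline{\omega}}g=\big(D_{\bx_p}A-\partial_rB\big)+\underline{\omega}\big(\overline{D}_{\bx_p}B+\partial_rA\big)=0 .
\]
With this correction the rest of your proof goes through unchanged.
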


The interested reader can refer to \cite{Dou} for a version of the representation formula  for slice regular functions  on non-axially-symmetric domains.

\subsection{Generalized partial-slice   functions}\label{subs2.5}
Theorem   \ref{Representation-Formula-SM} shows that on some specific domains, the functions we are studying have a specific form and this fact suggests to define generalized partial-slice  (and then regular) functions, see \cite{Ghiloni-Perotti-11} for the special case of slice regular functions over real alternative $\ast$-algebras.

An open set $D$ of $\mathbb{R}^{p+2}$  is called invariant under the reflection  of the $(p+2)$-th variable if
$$ \bx'=(\bx_p,r) \in D \Longrightarrow   \bx_\diamond':=(\bx_p,-r)  \in D.$$
The  \textit{p-symmetric completion} $ \Omega_{D}$ of  $D$ is defined by
$$\Omega_{D}=\bigcup_{\underline{\omega} \in \mathbb{S}} \, \big \{\bx_p+r\underline{\omega}\in M:   \exists \bx_p \in \mathbb{R}^{p+1},  \ \exists r\geq 0,\  \mathrm{s.t.} \ (\bx_p,r)\in D \big\}.$$
\begin{definition}
A function $F=(F_1,F_2): D\longrightarrow  \mathbb{A}^{2}$ in an open set $D\subseteq  \mathbb{R}^{p+2}$, which is invariant under the reflection  of the $(p+2)$-th variable, is called  a \textit{stem function} if
the $\mathbb{A}$-valued components  $F_1, F_2$  satisfy  the so-called even-odd conditions
$$ F_1(\bx_{\diamond}')= F_1(\bx'), \qquad  F_2(\bx_{\diamond}')=-F_2(\bx'), \qquad  \bx'=(\bx_p,r) \in D.$$
Each stem function $F$ induces a (left)  generalized partial-slice
function $f=\mathcal I(F): \Omega_{D} \longrightarrow \mathbb{A}$ given by
 $$f(\bx)=\mathcal I(F)(\bx) :=F_1(\bx')+\underline{\omega} F_2(\bx'), \qquad   \bx=\bx_p+r\underline{\omega}   \in  \Omega_{D}.$$
\end{definition}
We introduce the set of  all induced  generalized partial-slice functions  on $\Omega_{D}$ denoted by
 $$ {\mathcal{GS}}(\Omega_{D},\mathbb{A}) =\Big\{f=\mathcal I(F):    \ F \ {\mbox {is an}}\ \mathbb{A}^{2} {\mbox {-valued stem function on }} D  \Big\}$$
and, for $k\in \mathbb{N}$, we set
 $${\mathcal{GS}}^{k}(\Omega_{D},\mathbb{A}) =\Big\{f=\mathcal I(F):    \ F\in C^{k}(D, \mathbb{A}^{2}) \ {\mbox {is an}}\ \mathbb{A}^{2}   {\mbox {-valued stem function}}   \Big\}.$$

\begin{definition}\label{definition-GSR}
Let $f \in {\mathcal{GS}}^{1}(\Omega_{D}, \mathbb{A})$. The function $f$ is called generalized partial-slice regular of type $(p,q)$ if its stem function $F=(F_1,F_2)\in  \mathbb{A}^{2}$ satisfies  the generalized Cauchy-Riemann equations
 \begin{eqnarray}\label{C-R}
 \left\{
\begin{array}{ll}
D_{\bx_p}  F_1- \partial_{r} F_2=0,
\\
 \overline{D}_{\bx_p}  F_2+ \partial_{r} F_1=0.
\end{array}
\right.
\end{eqnarray}
\end{definition}
As before, the type $(p,q)$ will be fixed and omitted in the sequel. Denote by $\mathcal {GSR}(\Omega_D, \mathbb{A})$ the set of all generalized partial-slice regular functions  on $\Omega_D$.

\begin{remark}
Let  $\bx'=(0,x_1,x_2,\ldots,x_p, r)=(0,x_1,x_2,\ldots,x_p, x_{m})\in \{0\} \oplus  \mathbb{R}^{m}$, where $m=p+1$, and consider   the   function $F=(F_1,F_2)\in C^1(D,\mathbb{A}^{2}) $  of the form
$$ F_1(\bx')=\sum_{i=1}^{p}f_i(\bx')v_i, \ F_2(\bx')=f_{m}(\bx'), \quad f_i(\bx')\in \mathbb{R}, i=1,2,\ldots,m.$$
Then (\ref{C-R}) reduces into the so-called Riesz system:
 \begin{eqnarray*}
 \left\{
\begin{array}{ll}
\sum_{i=1}^{m}\partial_{x_i} f_i =0,
\\
\partial_{x_i} f_j =\partial_{x_j} f_i,\quad i,j=1,2,\ldots,m,
\end{array}
\right.
\end{eqnarray*}
which was  considered in 1960 by  Stein and Weiss \cite{Stein-Weiss} in the study of  the theory of harmonic functions of several variables.
\end{remark}

Theorem  \ref{Representation-Formula-SM} provides  a relation between  functions classes $\mathcal {GSM}$ and $\mathcal {GSR}$ defined in  p-symmetric domains  can be formulated as the following result, whose proof is  the same as   \cite[Theorem 5.8]{Xu-Sabadini-26} and hence omitted here.

\begin{theorem} \label{relation-GSR-GSM}

(i) For a p-symmetric domain $\Omega=\Omega_{D}$ with $\Omega  \cap \mathbb{R}^{p+1}= \emptyset$, it holds that $\mathcal {GSM}(\Omega, \mathbb{A}) \supsetneqq \mathcal {GSR}(\Omega_{D}, \mathbb{A})$.

(ii) For a p-symmetric domain $\Omega=\Omega_{D}$ with $\Omega  \cap \mathbb{R}^{p+1}\neq \emptyset$,  it holds  that $\mathcal {GSM}(\Omega, \mathbb{A}) = \mathcal {GSR}(\Omega_{D}, \mathbb{A})$.
\end{theorem}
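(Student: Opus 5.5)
We prove the two inclusions $\mathcal{GSR}(\Omega_D,\mathbb{A})\subseteq\mathcal{GSM}(\Omega,\mathbb{A})$ in general, then equality in case (ii) and strictness in case (i).

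\emph{Step 1: $\mathcal{GSR}\subseteq\mathcal{GSM}$.} Let $f=\mathcal I(F)$ with $F=(F_1,F_2)\in C^1(D,\mathbb{A}^2)$ satisfying (\ref{C-R}). Fix $\underline{\omega}\in\mathbb S$. On $\Omega_{\underline{\omega}}$ we may write $f_{\underline{\omega}}(\bx_p+r\underline{\omega})=F_1(\bx_p,r)+\underline{\omega}F_2(\bx_p,r)$ for all real $r$, including $r\le 0$. This uses the even-odd conditions, since $\bx_p+r\underline{\omega}=\bx_p+(-r)(-\underline{\omega})$. Hence $f_{\underline{\omega}}$ is $C^1$, and
$$D_{\underline{\omega}}f_{\underline{\omega}}=D_{\bx_p}F_1+\sum_{s=0}^p v_s(\underline{\omega}\,\partial_{x_s}F_2)+\underline{\omega}\,\partial_rF_1+\underline{\omega}(\underline{\omega}\,\partial_rF_2).$$
The non-associativity is handled as follows:
\begin{itemize}
\item Proposition \ref{artin} with $x=\underline{\omega}\in M$ gives $\underline{\omega}(\underline{\omega}a)=(\underline{\omega}\underline{\omega})a=-a$.
\item For $1\le s\le p$, the element $v_s+\underline{\omega}\in M$, so Proposition \ref{artin} applied to it together with linearity of the associator shows that $[v_s,\underline{\omega},a]$ is alternating in $(v_s,\underline{\omega})$. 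Hence $v_s(\underline{\omega}a)+\underline{\omega}(v_sa)=(v_s\underline{\omega}+\underline{\omega}v_s)a=0$, because $\underline{\omega}$ anticommutes with $v_1,\dots,v_p$. Thus $v_s(\underline{\omega}a)=-\underline{\omega}(v_sa)$.
\end{itemize}
Substituting these, the expression becomes $(D_{\bx_p}F_1-\partial_rF_2)+\underline{\omega}(\overline{D}_{\bx_p}F_2+\partial_rF_1)$, which vanishes by (\ref{C-R}). So $f\in\mathcal{GSM}(\Omega,\mathbb{A})$.

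\emph{Step 2: $\mathcal{GSM}\subseteq\mathcal{GSR}$ when $\Omega\cap\mathbb{R}^{p+1}\neq\emptyset$.} A p-symmetric domain meeting $\mathbb{R}^{p+1}$ is a slice domain; this needs a small connectivity check on each $\Omega_{\underline{\omega}}$. Then Theorem \ref{Representation-Formula-SM} applies. We define $F_1,F_2$ as in that theorem; they are independent of $\underline{\eta}$ and visibly even and odd in $r$. The representation formula with $\underline{\omega}=\underline{\eta}$ gives $f=\mathcal I(F)$. Regularity of $F$ follows from Remark \ref{C1-C00}.

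It remains to verify (\ref{C-R}). Fix $\underline{\eta}$; by Step 1's computation read backwards, $0=D_{\underline{\eta}}f_{\underline{\eta}}=A+\underline{\eta}B$, where
$$A=D_{\bx_p}F_1-\partial_rF_2,\qquad B=\overline{D}_{\bx_p}F_2+\partial_rF_1$$
are independent of $\underline{\eta}$. Replacing $\underline{\eta}$ by $-\underline{\eta}$, or equivalently evaluating at $-r$ and using parity, gives $A-\underline{\eta}B=0$. Hence $A=0$ and $\underline{\eta}B=0$, so $B=-\underline{\eta}(\underline{\eta}B)=0$.

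\emph{Step 3: strictness in case (i).} When $\Omega\cap\mathbb{R}^{p+1}=\emptyset$, we adapt Example \ref{example-1}. With $U=\mathrm H_{\underline{\omega}}\cup\mathrm H_{-\underline{\omega}}$, the function is locally constant on slices, hence in $\mathcal{GSM}$. It is not of the form $F_1+\underline{\omega}F_2$: slice functions satisfy the representation formula, so their values on $[\bx]$ are determined by values at two antipodal points. This forces $f$ to be affine in $\underline{\omega}$, whereas this function takes value $b$ on one pair and $a$ elsewhere, with $a\neq b$. This needs $q\ge2$, i.e.\ $p\le m-2$; for $q=1$, where $\mathbb S=\{\pm v_m\}$, strictness is proved differently, e.g.\ by taking different functions on the two components. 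I expect this edge case, together with the slice-domain/connectivity check in Step 2, to be the main obstacle; the algebra in Step 1 is the key non-associative point.
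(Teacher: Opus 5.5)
Your proposal is correct and follows essentially the route the paper intends. The paper omits the proof and defers to the octonionic case, but it sets up exactly your ingredients. The inclusion $\mathcal{GSR}\subseteq\mathcal{GSM}$ is the direct slice computation; your anticommutation step is Lemma \ref{a-w-b}, and you do not need Proposition \ref{artin} for $v_s+\underline{\omega}$, since the associator is alternating in any alternative algebra. Equality in (ii) comes from the Representation Formula plus your parity argument, and strictness in (i) comes from Example \ref{example-1}.

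The slice-domain check you flag is immediate. The image of $\Omega$ under $\bx\mapsto(\bx_p,|\underline{\bx}_q|)$ is connected. By p-symmetry, $\Omega_{\underline{\omega}}$ is the union of its copies in $\mathrm{H}_{\underline{\omega}}$ and $\mathrm{H}_{-\underline{\omega}}$, and these meet in $\Omega\cap\mathbb{R}^{p+1}\neq\emptyset$.

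One correction concerns $q=1$. There is nothing to prove there: a p-symmetric set is then invariant under $x_m\mapsto -x_m$, and a connected open set with this symmetry must meet the hyperplane $\mathbb{R}^{p+1}$. So (i) is vacuous, which is why Example \ref{example-1} assumes $p\le m-2$. Your suggested alternative (different functions on two components) would in fact fail. For $q=1$, every function on a reflection-invariant open set is induced by a stem function, namely $F_1(\bx')=\frac12\big(f(\bx_p+rv_m)+f(\bx_p-rv_m)\big)$ and $F_2(\bx')=\frac12 v_m\big(f(\bx_p-rv_m)-f(\bx_p+rv_m)\big)$, using $v_m(v_m a)=-a$. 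Your Step 2 parity argument then turns $Df=0$ into (\ref{C-R}). Hence $\mathcal{GSM}=\mathcal{GSR}$ for $q=1$, even on disconnected reflection-invariant sets avoiding $\mathbb{R}^{p+1}$.
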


 The class of slice functions appears naturally as a first step in the Fueter-Sce construction in its classical formulations over quaternions, Clifford algebras, and octonions. It is then convenient to consider also for real alternative $*$-algebras the class of generalized partial-slice functions when dealing with the Fueter-Sce theorem. We point out  that the maps appearing in the construction can be factorized to produce the so-called fine structures, see \cite{CDPS-23,CDS-25}.

\subsection{Fueter polynomials}\label{secFueterpoly}
 To introduce the Fueter polynomials, we ned to introduce first the so-called Fueter variables which, in the partial-slice case, are of left and right type.
\begin{definition}\label{Fueter-variables}
 The so-called (left) Fueter variables induced by $\mathcal{B}$ are defined as
 $$ z_{\ell}(\bx)=z_{\ell}^{L}(\bx)= z_{\ell}(\bx_{p}+r\underline{\omega})= x_{\ell}+r \underline{\omega} v_\ell, \ \ell=0,1,\ldots,p.$$
Similarly, the so-called right  Fueter variables induced by $\mathcal{B}$ are defined as
 $$ z_{\ell}^{R}(\bx)= z_{\ell}^{R}(\bx_{p}+r\underline{\omega})=  x_{\ell}+r v_\ell  \underline{\omega},\ \ell=0,1,\ldots,p.$$
 \end{definition}
Let
$(a_{1}  a_{2} \cdots  a_{k})_{\otimes_{k}}$ be the product of the ordered $k$ elements $(a_1, a_2,\ldots ,a_k)\in \mathbb{A}^{k}$ in a fixed associative order $\otimes_{k}$. In particular, denote the multiplication  from left to right by
$$(a_1 a_2 \cdots a_k)_{L}:=(\cdots((a_1 a_2)a_3) \cdots )a_k,$$
and the multiplication  from  right to left by
$$(a_1 a_2 \cdots a_k)_{R}:=a_1(  \cdots (a_{k-2}(a_{k-1}a_k))\cdots ).$$

Let us recall \cite[Proposition 4.2]{HRX}, which can be used to define Fueter polynomials  in the general case of real
alternative algebras.
\begin{proposition}\label{no-order}
Let $a, a_0,a_1,  \ldots ,a_p\in \mathbb{A} $ and $(j_1,j_2,\ldots, j_k) \in \{0,1,\ldots,p\}^{k}$,  repetitions being allowed. Then the following sum is independent of the chosen associative order $\otimes_{(k+1)}$
\begin{eqnarray*}\label{sum}
  \sum_{(i_1,i_2, \ldots, i_k)\in \sigma} (a_{i_1}  a_{i_2} \cdots  a_{i_k}a)_{\otimes_{(k+1)}},
   \end{eqnarray*}
where the sum runs over  all distinguishable permutations $\sigma$ of $(j_1,j_2,\ldots, j_k)$. \\
In particular, we have
$$\sum_{(i_1,i_2, \ldots, i_k)\in \sigma} (a_{i_1}  a_{i_2} \cdots  a_{i_k}a)_L
=\sum_{(i_1,i_2, \ldots, i_k)\in \sigma} (a_{i_1}  a_{i_2} \cdots  a_{i_k}a)_{R}.$$
\end{proposition}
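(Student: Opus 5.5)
The plan is to prove the statement by strong induction on $k$. The key tool is Artin's theorem, combined with the fact that the associator is alternating and trilinear. It suffices to show that each sum $\sum_{\sigma}(a_{i_1}\cdots a_{i_k}a)_{\otimes_{(k+1)}}$ equals the left-to-right sum $\sum_{\sigma}(a_{i_1}\cdots a_{i_k}a)_L$, since every associative order can then be compared with this fixed one. By multilinearity we may replace the symmetrized sum over distinguishable permutations with a polarization. Indeed, it is (up to the multinomial factor) the coefficient of $t_0^{\alpha_0}\cdots t_p^{\alpha_p}$, where $\alpha$ is the multiplicity vector of $(j_1,\dots,j_k)$, in the expression $(u u\cdots u\, a)_{\otimes_{(k+1)}}$ with $u=\sum_{\ell} t_\ell a_\ell$ and $t_\ell\in\mathbb R$. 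Extracting coefficients of a real polynomial identity in the $t_\ell$ commutes with the multiplication, because reals associate with everything (Proposition \ref{real}).

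The key step is therefore to show that, for a single element $u\in\mathbb A$ and arbitrary $a\in\mathbb A$, every bracketing of the word $u u\cdots u a$ (with $k$ copies of $u$ followed by $a$) gives the same element. This is exactly Artin's theorem: $u$ and $a$ generate an associative subalgebra, so all bracketings of any word in $u,a$ coincide. In particular $(u\cdots u a)_{\otimes_{(k+1)}}=(u\cdots u a)_L=(u\cdots ua)_R=u^k a$, where $u^k$ is unambiguous by power-associativity.

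The final step is to expand both sides as polynomials in $(t_0,\dots,t_p)$ and compare coefficients. The coefficient of $t^\alpha$ in $(u\cdots ua)_{\otimes}$ is precisely $\sum_{\sigma}(a_{i_1}\cdots a_{i_k}a)_{\otimes}$, taken over all distinguishable permutations of the multiset determined by $\alpha$. The same holds for the $L$ and $R$ orders. Since the polynomials agree for all real $t$, their coefficients agree, which gives the claim and in particular the stated $L=R$ identity.

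The only point needing care is the bookkeeping in the coefficient extraction. We must check that each monomial $t^\alpha$ arises exactly once per ordered sequence $(i_1,\dots,i_k)$ with multiplicities $\alpha$, so that no extra combinatorial factor appears. This is immediate from trilinearity of multiplication in each slot. If one prefers to avoid polarization, the fallback is induction on $k$, moving brackets one at a time via the alternating associator identities $[x,y,z]=-[y,x,z]$ and symmetrizing. The polarization argument is the cleaner route.
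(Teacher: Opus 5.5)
Your polarization argument is correct and complete. The paper does not prove this statement; it only recalls it from \cite[Proposition 4.2]{HRX}. There is therefore no in-paper argument to compare against, and your proof supplies a self-contained one.

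The essential steps all hold:
\begin{itemize}
\item For $u=\sum_{\ell}t_\ell a_\ell$ with $t\in\mathbb{R}^{p+1}$, Artin's theorem applied to the pair $u,a$ makes $(u\cdots u\,a)_{\otimes_{(k+1)}}$ independent of the order $\otimes_{(k+1)}$.
\item Real bilinearity of the product expands this element as $\sum_{|\alpha|=k}t^{\alpha}S_{\otimes}(\alpha)$. Here $S_{\otimes}(\alpha)$ is exactly the sum over the distinguishable permutations with multiplicity vector $\alpha$, with the same fixed order used in every term.
\item Each index sequence occurs exactly once in the expansion, so there is no multinomial factor. Your early phrase ``up to the multinomial factor'' contradicts your own later bookkeeping and should be deleted.
\item The difference of the expansions for two orders is, componentwise in a basis of $\mathbb{A}$, a real polynomial vanishing on all of $\mathbb{R}^{p+1}$. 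Hence all its coefficients vanish.
\end{itemize}

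Two presentational points. The announced ``strong induction on $k$'' is never used and should be removed. The fallback of moving brackets with associator identities is unnecessary.

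A side benefit of your route is that it uses only Artin's theorem and $\mathbb{R}$-bilinearity. The identity therefore holds in any real alternative algebra, independently of the $\ast$-involution and of the hypercomplex-subspace setting.
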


\begin{definition}[Fueter polynomials]\label{definition-Fueter}
For  $\mathrm{k}=(k_0,k_1,\ldots,k_{p})\in \mathbb{N}^{p+1}$, let   $\overrightarrow{\mathrm{k}}:=(j_1,j_2,\ldots, j_k)$   be an alignment with the number of $0$ in the alignment is $k_0$, the number of $1$ is $k_1$, and the number of $p$ is $k_{p}$, where $k=|\mathrm{k}|:=k_0+k_1+\cdots+k_{p}, 0\leq j_1\leq j_2\leq \ldots\leq j_k\leq p$. Define
$$\mathcal{P}_{\mathrm{k}}(\bx)  =\mathcal{P}^{L}_{\mathrm{k}}(\bx) =\frac{1}{k!}  \sum_{(i_1,i_2, \ldots, i_k)\in \sigma(\overrightarrow{\mathrm{k}})} z_{i_1}  z_{i_2} \cdots  z_{i_k},$$
where the sum runs over the $\dfrac{k!}{\mathrm{k}!}$ different permutations $\sigma(\overrightarrow{\mathrm{k}})$ of $\overrightarrow{\mathrm{k}}$. When $\mathrm{k}=(0,\ldots,0)=\mathbf{0}$, we set $\mathcal{P}_{ \mathbf{0}}(\bx)=1$; when there is at least one negative component in $\mathrm{k}$, we set $\mathcal{P}_{\mathrm{k}}(\bx)=0$.

Similarly, we can define $\mathcal{P}^{R}_{  \mathrm{k}}$ when $z_{\ell}$ are replaced by $z_{\ell}^{R}$.
\end{definition}

In fact, unlike the monogenic case in \cite[Proposition 4.4]{HRX}, the non-associativity makes almost impossible to verify directly that $\mathcal{P}_{\mathrm{k}}$  are generalized partial-slice monogenic. Later, following the methods used in \cite[Theorem 6.12]{Xu-Sabadini-26} we shall prove that the Fueter polynomials can be also obtained by the CK-extension in Definition \ref{Slice-Cauchy-Kovalevska-extension}; see Theorem \ref{V-P}. This fact implies the generalized partial-slice monogenicity of the Fueter polynomials $\mathcal{P}_{\mathrm{k}}$.

\section{CK-extensions}
In this section, we shall discuss three types of  CK-extensions over
 real alternative $\ast$-algebras.  We first consider the CK-extension of a real analytic function to a generalized partial-slice monogenic function; then we consider the extension to a monogenic function which, as we shall see, will be of special type. Finally we shall discuss the CK-extension of a pair of real analytic functions to harmonic functions which will be, again, of special type.
\subsection{CK-extension for generalized partial-slice monogenic functions}
 First, we define  a  generalized partial-slice monogenic  Cauchy-Kovalevskaya extension   starting from  real analytic functions defined in some domain in $\mathbb{R}^{p+1}$.
\begin{definition}[CK-extension]\label{Slice-Cauchy-Kovalevska-extension}
Let   $\Omega_{0}$ be a domain in  $\mathbb{R}^{p+1}$ and denote $\Omega_{0}^{\ast}$   by
$$\Omega_{0}^{\ast}=\{ \bx_{p}+\underline{\bx}_{q}:\bx_{p}\in \Omega_{0}, \underline{\bx}_{q}\in \mathbb{R}^{q}\}.$$
Given a  real analytic function  $f_{0}\in C^{\omega}(\Omega_0, \mathbb{A})$, assume that there exists a left generalized partial-slice monogenic function $f^{\ast}$ in some domain $\Omega \subseteq \Omega_{0}^{\ast}$ with $\Omega_{0} \subset \Omega$ and such that $f^{\ast}(\bx_{p})=f_0(\bx_{p})$.
Such  a function $f^{\ast}$  is called the left generalized partial-slice Cauchy-Kovalevskaya extension (CK-extension for short) of  $f_{0}$.
\end{definition}

In the sequel, we shall use a fundamental  result whose proof     follows from   \cite[Lemma 3.15]{Xu-Sabadini-26}.

\begin{lemma}\label{a-w-b}
Let $a\in\mathbb{ R}^{p+1}, \underline{\omega}\in \mathbb{S}$. Then it holds that for any $b\in \mathbb{A}$
$$a(\underline{\omega}b)=\underline{\omega}(a^{c}b).$$
\end{lemma}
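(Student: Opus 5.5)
Since both sides of the identity $a(\underline{\omega}b)=\underline{\omega}(a^{c}b)$ are real-linear in $a$, it suffices to check it on the real basis $v_0=1,v_1,\ldots,v_p$ of $\mathbb{R}^{p+1}$. For $a=v_0=1$ the identity is trivial, because $a^c=1$. So the whole proof reduces to showing that for $\ell\in\{1,\ldots,p\}$ and every $b\in\mathbb{A}$,
$$v_\ell(\underline{\omega}b)=-\underline{\omega}(v_\ell b),$$
where we have used $v_\ell^c=-v_\ell$.

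The two facts needed are anticommutation and the vanishing of an associator. For anticommutation, write $\underline{\omega}=\sum_{s=p+1}^{m}\omega_s v_s$ with $\omega_s$ real and $\ell\le p<s$. The hypercomplex basis relations $v_\ell v_s=-v_sv_\ell$ then give $v_\ell\underline{\omega}=-\underline{\omega}v_\ell$.

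For the associator, the key point is that $[v_\ell,\underline{\omega},b]=-[\underline{\omega},v_\ell,b]$, since the associator is alternating. Expanding the associator on the left gives
$$[v_\ell,\underline{\omega},b]=(v_\ell\underline{\omega})b-v_\ell(\underline{\omega}b).$$
Expanding the one on the right gives
$$-[\underline{\omega},v_\ell,b]=-(\underline{\omega}v_\ell)b+\underline{\omega}(v_\ell b)=(v_\ell\underline{\omega})b+\underline{\omega}(v_\ell b),$$
where the anticommutation was used in the second equality. Equating the two expressions, the terms $(v_\ell\underline{\omega})b$ cancel and leave $-v_\ell(\underline{\omega}b)=\underline{\omega}(v_\ell b)$, which is exactly the required identity. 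Note that this step uses only that the associator is alternating; it needs neither Artin's theorem nor a Moufang identity.

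The main obstacle I anticipate is just the bookkeeping needed to get the anticommutation relation, and it is settled by the hypercomplex basis relations. To double-check the argument in one line: by linearity, for $a=a_0+\underline{a}$ with $\underline{a}=\sum_{\ell=1}^{p}a_\ell v_\ell$, we have $a(\underline{\omega}b)=a_0\underline{\omega}b+\underline{a}(\underline{\omega}b)=\underline{\omega}\bigl((a_0-\underline{a})b\bigr)=\underline{\omega}(a^cb)$. This uses Proposition \ref{real} so that real scalars pass freely through the products.
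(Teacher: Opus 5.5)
Your proof is correct and is essentially the paper's argument: both combine the alternating property of the associator with $a\underline{\omega}=\underline{\omega}a^{c}$, so that the $(a\underline{\omega})b$ terms cancel. The only difference is that you first reduce to the basis vectors $v_\ell$ by linearity, whereas the paper handles a general $a$ in one step via $[a,\underline{\omega},b]=-[a^{c},\underline{\omega},b]=[\underline{\omega},a^{c},b]$, which uses that $a+a^{c}$ is real.
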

\begin{proof}
Note that, for any $a,b,\underline{\omega}\in \mathbb{A}$,
$$[a,  \underline{\omega}, b]+[a^{c},\underline{\omega}, b]= [a+a^{c},  \underline{\omega},b].$$
In the particular case where  $a\in M$ and $b,\underline{\omega}\in \mathbb{A}$,
$$[a,  \underline{\omega}, b]+[a^{c},\underline{\omega}, b]=0,$$
which gives
$$ [a,  \underline{\omega}, b]=-[a^{c},\underline{\omega}, b]=[ \underline{\omega},a^{c}, b],$$
where we used the fact that the algebra is alternative to get the second equality.
Combining this with the fact that
$$ a \underline{\omega} =\underline{\omega} a^{c}, \quad a\in\mathbb{ R}^{p+1}, \underline{\omega}\in \mathbb{S}, $$
we get the assertion.
\end{proof}

\begin{theorem}\label{slice-Cauchy-Kovalevsky-extension}
Let   $\Omega_{0}$ be a domain in  $\mathbb{R}^{p+1}$ and  consider the real analytic function $f_{0}\in C^{\omega}(\Omega_0, \mathbb{A})$.  Then the function given by
\begin{eqnarray}\label{series-CK}
 CK[f_{0}](\bx):=\sum_{k=0}^{+\infty} \frac{r^{2k}}{(2k)!}  (-\Delta_{\bx_p})^{k} f_{0}(\bx_p)
+\underline{\omega}  \sum_{k=0}^{+\infty} \frac{r^{2k+1}}{(2k+1)!}(-\Delta_{\bx_p}) ^{k}(D_{\bx_p} f_{0}(\bx_p)),
\end{eqnarray}
is a   CK-extension of $f_0$ in a  p-symmetric slice domain $\Omega \subseteq \Omega_{0}^{\ast}$ with $\Omega_{0} \subset \Omega$, where $\bx=\bx_p+ r \underline{\omega}$ with $\bx_{p}\in\R^{p+1}, r \geq 0,$ and $\underline{\omega}  \in \mathbb S$.
\end{theorem}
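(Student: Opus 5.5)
Write $CK[f_0]=\mathcal I(F)$ with stem components
$$F_1(\bx_p,r)=\sum_{k\ge0}\frac{r^{2k}}{(2k)!}(-\Delta_{\bx_p})^kf_0,\qquad F_2(\bx_p,r)=\sum_{k\ge0}\frac{r^{2k+1}}{(2k+1)!}(-\Delta_{\bx_p})^k D_{\bx_p}f_0 .$$
The proof has three steps: show the two series converge on a suitable domain, show $f^*|_{\mathbb R^{p+1}}=f_0$, and show that each slice restriction is annihilated by $D_{\underline{\omega}}$.

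\textbf{Convergence.} Since $f_0$ is real analytic, for every compact $K\subset\Omega_0$ there are constants $C,\rho>0$ with $|\partial^\alpha f_0|\le C\,\alpha!\,\rho^{-|\alpha|}$ on $K$. Expanding $(-\Delta_{\bx_p})^k$ into $(p+1)^k$ monomials of order $2k$ gives a bound of the form $|(-\Delta_{\bx_p})^kf_0|\le C'(2k)!\,\rho'^{-2k}$, and similarly with one extra derivative for the second series. Hence both series converge normally, together with all their derivatives, for $r<\rho'(K)$. Define $D\subset\mathbb R^{p+2}$ as the union over points of $\Omega_0$ of small neighbourhoods $\{(\bx_p,r):|r|<\rho'\}$; this set is symmetric in $r$. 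By term-by-term differentiation $F_1,F_2$ are real analytic on $D$. $F_1$ is even in $r$ and $F_2$ is odd, so $F$ is a stem function. Setting $\Omega=\Omega_D$ gives a p-symmetric set, and taking the neighbourhoods connected (for instance tubes over a connected exhaustion) makes it a slice domain containing $\Omega_0$. At $r=0$ only the $k=0$ term of $F_1$ survives, so $f^*(\bx_p)=f_0(\bx_p)$.

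\textbf{Monogenicity.} I would verify the system \eqref{C-R} termwise; then Theorem \ref{relation-GSR-GSM}(ii) gives $CK[f_0]\in\mathcal{GSM}(\Omega,\mathbb A)$. One could also check $D_{\underline{\omega}}f_{\underline{\omega}}=0$ directly, which needs $D_{\bx_p}(\underline{\omega}F_2)=\underline{\omega}(\overline{D}_{\bx_p}F_2)$. That identity is exactly Lemma \ref{a-w-b} applied to each $v_s$, $s\le p$, because $v_s^c$ corresponds to the conjugate operator; it is the bridge between the two formulations.

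Checking \eqref{C-R} itself:
\begin{itemize}
\item For the first equation, $\partial_rF_2=\sum_k \frac{r^{2k}}{(2k)!}(-\Delta_{\bx_p})^kD_{\bx_p}f_0$. Since $D_{\bx_p}$ has constant coefficients and $\Delta_{\bx_p}$ is real, they commute, so this equals $D_{\bx_p}F_1$.
\item For the second equation, $\overline D_{\bx_p}F_2=\sum_k\frac{r^{2k+1}}{(2k+1)!}(-\Delta_{\bx_p})^k\,\overline D_{\bx_p}(D_{\bx_p}f_0)$. By Lemma \ref{d-dbar-laplace}, applied on $\mathbb R^{p+1}$ with the hypercomplex basis $v_0,\dots,v_p$, we have $\overline D_{\bx_p}D_{\bx_p}f_0=\Delta_{\bx_p}f_0$. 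So this sum equals $-\sum_k\frac{r^{2k+1}}{(2k+1)!}(-\Delta_{\bx_p})^{k+1}f_0=-\partial_rF_1$.
\end{itemize}

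\textbf{Main obstacle.} The genuinely delicate point is non-associativity. We need $\overline D_{\bx_p}(D_{\bx_p}g)=\Delta g$ for $\mathbb A$-valued $g$, and we need the passage through $\underline{\omega}$ in Lemma \ref{a-w-b}. Both are already available from earlier results, so the remaining care is in the convergence estimates and in choosing $D$ so that $\Omega_D$ is a slice domain.
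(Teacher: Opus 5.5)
Your proposal is correct and is essentially the paper's argument. Convergence comes from the same analytic bound $|\Delta_{\bx_p}^k f_0|\le \lambda(2k)!\,C^{2k}$ on compacta, giving the domain $\bigcup_K \mathring{K}\times\{|\underline{\bx}_q|<1/C_K\}$. Monogenicity rests on the same two non-associative facts, Lemma~\ref{a-w-b} together with Artin's theorem, and $\overline{D}_{\bx_p}(D_{\bx_p} f_0)=\Delta_{\bx_p} f_0$ (which the paper re-derives by hand from $v_i^c v_j+v_j^c v_i=0$ and the cancellation of $[v_i^c,v_j,a]+[v_j^c,v_i,a]$).

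The only difference is packaging. The paper applies $D_{\underline{\omega}}$ directly to the series, using $D_{\bx_p}(\underline{\omega}(D_{\bx_p} f_0))=\underline{\omega}\,\Delta_{\bx_p} f_0$ and $\underline{\omega}(\underline{\omega}a)=-a$. You instead verify \eqref{C-R} for the stem pair and absorb the Lemma~\ref{a-w-b}/Artin step into the easy inclusion $\mathcal{GSR}\subseteq\mathcal{GSM}$ of Theorem~\ref{relation-GSR-GSM}.
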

\begin{proof}
First of all, it should be pointed that  the series in (\ref{series-CK})  does not depend  on $\underline{\omega}$ at any $\bx_p\in\Omega_{0} \subseteq \mathbb{R}^{p+1}$. Let $f_{0}\in C^{\omega}(\Omega_0, \mathbb{A})$.
We show that $CK[f_{0}]$ given by the series in (\ref{series-CK}) is well-defined  on some  p-symmetric slice domain.
To this end, we recall a well-known result: a real-valued function $g$ is real analytic in $\Omega_{0}$ if and only if, for any  compact set $K\subset \Omega_0$, there exists some constant $C=C_{g, K}>0$ such that
$$| \partial^{\mathrm{k}} g(\bx_{p}) |\leq \mathrm{k}! C^{1+|\mathrm{k}|},  \quad  \forall\bx_{p}\in K, \mathrm{k} \in \mathbb{N}^{p+1},  $$
where  $\partial^{\mathrm{k}} =\frac{\partial^{k}  }{\partial_{x_{0}}^{k_0}\partial_{x_{1}}^{k_1}\cdots\partial_{x_{p}}^{k_p} }$ with $\mathrm{k}=(k_0,k_1,\ldots,k_{p})$ and $k=|\mathrm{k}|$.\\
Hence, given $f_{0} \in C^{\omega}(\Omega_0, \mathbb{A})$, there exist  some constants $\lambda=\lambda_{p,K}, C=C_{K}>0$ such that
$$| \Delta_{\bx_p}^{k} f_{0}(\bx_{p}) |\leq \lambda (2k)! C^{2k},  \quad  \bx_{p}\in K,  k\in    \mathbb{N},  $$
which gives that, for all $\bx_{p}\in K$,
$$\Big|\sum_{k=0}^{+\infty} \frac{r^{2k}}{(2k)!}  (-\Delta_{\bx_p})^{k} f_{0}(\bx_p)\Big|\leq \lambda \sum_{k=0}^{+\infty} (rC)^{2k},$$
and
$$\Big| \sum_{k=0}^{+\infty} \frac{r^{2k+1}}{(2k+1)!}(-\Delta_{\bx_p}) ^{k} f_{0}(\bx_p)\Big| \leq  \lambda r \sum_{k=0}^{+\infty}  \frac{(rC)^{2k}}{2k+1}.$$
Hence, the right side of    (\ref{series-CK})  converges      normally in  the p-symmetric, slice domain
$$\Omega=\bigcup_{K\subset  \Omega_{0}} \mathring{K} \times \{\underline{\bx}_{q}\in \mathbb{R}^{q}: |\underline{\bx}_{q}|<\frac{1}{C_{K}}\} \subseteq \Omega_{0}^{\ast}.$$

Now it remains to show that $CK[f_{0}]\in \mathcal{GSM}^{L}(\Omega, \mathbb{A})$. To this end, we fix $\underline{\omega}\in \mathbb{S}$ and we prove that, for $\bx_p+ r \underline{\omega}\in \Omega_{\underline{\omega}}$,
  \begin{eqnarray}\label{w-w-D-f}
  D_{\bx_p} (\underline{\omega}  (D_{\bx_p} f_{0}(\bx_p)))=\underline{\omega}\Delta_{\bx_p} f_{0}(\bx_p), \end{eqnarray}
and
  \begin{eqnarray}\label{w-w-f}
 \underline{\omega} (\underline{\omega} (D_{\bx_p} f_{0}) )= -D_{\bx_p} f_{0}.
 \end{eqnarray}
We note that (\ref{w-w-f}) follows by the Artin's theorem
 $$ \underline{\omega} (\underline{\omega} (D_{\bx_p} f_{0}) )= (\underline{\omega}^{2})(D_{\bx_p} f_{0}) =-D_{\bx_p} f_{0}.$$
To prove (\ref{w-w-D-f}), we recall that $M$ is a hypercomplex subspace of the alternative $\ast$-algebra $\mathbb A$, so that for all $i,j=0,1,\ldots,p,$ with $ i<j,$ and for all $a\in \mathbb A$, we have
 $$ v_i^{c}   v_j+v_j^{c}   v_i =0, \ \ \ \ [v_i^{c},v_j, a] +[v_j^{c},v_i,a]=0,$$
which give\begin{eqnarray*}
&&\sum_{i,j=0,  i<j}^{p}  \big(  v_i^{c}  (v_j  a) +  v_j^{c}  (v_i  a) \big)
 \\
 &=&  \sum_{i,j=0,  i<j}^{p} \big( (v_i^{c}   v_j)  a-[v_i^{c},v_j, a]+ (v_j^{c}   v_i)  a-[v_j^{c},v_i, a] \big)
 \\
 &=& \sum_{i,j=0,  i<j}^{p}  (v_i^{c}   v_j+v_j^{c}   v_i)  a-\sum_{i,j=0,  i<j}^{p} ([v_i^{c},v_j, a] +[v_j^{c},v_i, a]  )
 \\
  &=&0.
\end{eqnarray*}
Hence, by Lemma  \ref{a-w-b} and Proposition \ref{artin} and recalling that $f_{0}\in C^{\omega}(\Omega_0, \mathbb{A})$,
\begin{eqnarray*}
 &&D_{\bx_p} (\underline{\omega}  (D_{\bx_p} f_{0}))
=\sum_{i=0}^{p}v_i (    \underline{\omega}  (D_{\bx_p} \partial_{x_i}f_{0}))
 \\
 &=&\sum_{i=0}^{p} \underline{\omega} (v_i^{c}  (D_{\bx_p} \partial_{x_i}f_{0}))
 = \sum_{i,j=0}^{p} \underline{\omega} (v_i^{c}  (v_j  \partial_{x_i}\partial_{x_j}f_{0}))
 \\
  &=& \sum_{i =0 }^{p} \underline{\omega} (v_i^{c}  (v_i  \partial_{x_i}^{2} f_{0}))  +
  \sum_{ 0\leq i<j\leq p} \underline{\omega}\big(  v_i^{c}  (v_j  \partial_{x_i}\partial_{x_j}f_{0}) +  v_j^{c}  (v_i \partial_{x_i}\partial_{x_j}f_{0}) \big)
 \\
 &=& \sum_{i =0 }^{p} \underline{\omega}( (v_i^{c}  v_i ) \partial_{x_i}^{2} f_{0})  )
 =\underline{\omega} \Delta_{\bx_p} f_{0}.
\end{eqnarray*}
Now we can use (\ref{w-w-D-f}) and (\ref{w-w-f}) to obtain
\begin{eqnarray*}
 & &D_{\underline{\omega}} CK[f_{0}](\bx_p+ r \underline{\omega})\\
 &=& (D_{\bx_p}+\underline{\omega}\partial_{r}) \Big(\sum_{k=0}^{+\infty} \frac{r^{2k}}{(2k)!}  (-\Delta_{\bx_p})^{k} f_{0}(\bx_p)\Big)\\
&& +(D_{\bx_p}+\underline{\omega}\partial_{r}) \Big( \underline{\omega}  \sum_{k=0}^{+\infty} \frac{r^{2k+1}}{(2k+1)!}(-\Delta_{\bx_p}) ^{k}(D_{\bx_p} f_{0}(\bx_p)) \Big)
 \\
&=&  \sum_{k=0}^{+\infty} \frac{r^{2k}}{(2k)!}  (-\Delta_{\bx_p})^{k}D_{\bx_p} f_{0}(\bx_p)+\underline{\omega}\sum_{k=1}^{+\infty} \frac{r^{2k-1}}{(2k-1)!}  (-\Delta_{\bx_p})^{k} f_{0}(\bx_p)
\\
& &+\underline{\omega}  \sum_{k=0}^{+\infty} \frac{r^{2k+1}}{(2k+1)!}(-\Delta_{\bx_p}) ^{k}  \Delta_{\bx_p} f_{0}(\bx_p)-
\sum_{k=0}^{+\infty} \frac{r^{2k}}{(2k)!}(-\Delta_{\bx_p}) ^{k}D_{\bx_p} f_{0}(\bx_p)
\\
&=&0,
\end{eqnarray*}
so that $CK[f_{0}]\in \mathcal{GSM}^{L}(\Omega, \mathbb{A})$, as desired.
\end{proof}
\begin{remark}
Note that the series in (\ref{series-CK}) can be rewritten as
\begin{eqnarray*}
 CK[f_{0}](\bx)=\sum_{k=0}^{+\infty} \frac{\underline{\bx}_q^{2k}}{(2k)!}   \Delta_{\bx_p}^{k} f_{0}(\bx_p)
+  \sum_{k=0}^{+\infty} \frac{\underline{\bx}_q^{2k+1}}{(2k+1)!}\Delta_{\bx_p}^{k}(D_{\bx_p} f_{0}(\bx_p)),
\end{eqnarray*}
where $\bx=\bx_p+ \underline{\bx}_q$.
Similarly, we can prove that the series
\begin{eqnarray*}
 CK^{R}[f_{0}](\bx)=\sum_{k=0}^{+\infty} \frac{\underline{\bx}_q^{2k}}{(2k)!}   \Delta_{\bx_p}^{k} f_{0}(\bx_p)
+  \sum_{k=0}^{+\infty} \Delta_{\bx_p}^{k}( f_{0}(\bx_p)D_{\bx_p})\frac{\underline{\bx}_q^{2k+1}}{(2k+1)!},
\end{eqnarray*}
is a right  generalized partial-slice CK-extension   of  $f_{0}$.
\end{remark}

From  the obvious fact  $CK[f_{0}](\bx_p)=f_{0}(\bx_p)$ and Theorem \ref{Identity-theorem},  we get
\begin{theorem}\label{CK-slice-monogenic}
Let $f_{0}:  \mathbb{R}^{p+1} \to \mathbb{A}$ be a  polynomial. Then $CK[f_0]$  is the unique extension  of $f_{0}$ to $M$ which is generalized partial-slice monogenic.
\end{theorem}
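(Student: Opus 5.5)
The plan has two parts: existence on all of $M$, which comes from the series (\ref{series-CK}), and uniqueness, which comes from the Identity theorem (Theorem \ref{Identity-theorem}).

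\emph{Existence.} Let $f_0$ be a polynomial of degree $N$ with coefficients in $\mathbb{A}$. Every term of (\ref{series-CK}) is obtained from $f_0$ by applying the real-coefficient operators $\Delta_{\bx_p}^k$ and the operator $D_{\bx_p}$. Since $\Delta_{\bx_p}^k$ lowers degree by $2k$ and $D_{\bx_p}$ lowers it by one, $\Delta_{\bx_p}^{k} f_0=0$ for $2k>N$ and $\Delta_{\bx_p}^{k}D_{\bx_p}f_0=0$ for $2k+1>N$. So both series in (\ref{series-CK}) are finite sums.

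It follows that $CK[f_0](\bx)$ is a polynomial in $x_0,\dots,x_p,r$ of the form $F_1(\bx')+\underline{\omega}F_2(\bx')$, with $F_1$ even in $r$ and $F_2$ odd in $r$. This expression is well defined and real analytic on all of $M$: $F_2$ is odd, so at $r=0$ the value does not depend on the choice of $\underline{\omega}$, and $r^{2k}=|\underline{\bx}_q|^{2k}$ and $r^{2k+1}\underline{\omega}=|\underline{\bx}_q|^{2k}\underline{\bx}_q$ are polynomials in $\underline{\bx}_q$. The term-by-term computation in the proof of Theorem \ref{slice-Cauchy-Kovalevsky-extension}, using (\ref{w-w-D-f}) and (\ref{w-w-f}), only involves finitely many terms here. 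It is purely algebraic and needs no convergence estimate, so it gives $D_{\underline{\omega}}CK[f_0]=0$ on each full slice $\mathbb{R}^{p+2}\cong\mathbb{R}^{p+1}\oplus\underline{\omega}\mathbb{R}$. Hence $CK[f_0]\in\mathcal{GSM}(M,\mathbb{A})$, and clearly $CK[f_0](\bx_p)=f_0(\bx_p)$.

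\emph{Uniqueness.} Let $g\in\mathcal{GSM}(M,\mathbb{A})$ with $g|_{\mathbb{R}^{p+1}}=f_0$. The domain $M$ is a slice domain: it meets $\mathbb{R}^{p+1}$, and each $M_{\underline{\omega}}=\mathbb{R}^{p+1}\oplus\underline{\omega}\mathbb{R}$ is connected. Fix any $\underline{\omega}\in\mathbb{S}$. Then $g$ and $CK[f_0]$ coincide on $\mathbb{R}^{p+1}\subset M_{\underline{\omega}}$, which is a $(p+1)$-dimensional smooth manifold. Theorem \ref{Identity-theorem} then gives $g\equiv CK[f_0]$ on $M$.

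The only point that needs care is the first step: the polynomial $CK[f_0]$ must be defined and monogenic on the \emph{whole} of $M$, rather than only on the p-symmetric domain near $\mathbb{R}^{p+1}$ produced in Theorem \ref{slice-Cauchy-Kovalevsky-extension}. The finiteness of the sums is exactly what settles this.
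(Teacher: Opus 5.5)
Your proposal is correct and follows the paper's argument. The paper obtains the result directly from $CK[f_0](\bx_p)=f_0(\bx_p)$ together with the Identity theorem (Theorem \ref{Identity-theorem}) on the slice domain $M$. Your observation that the series in (\ref{series-CK}) terminate for polynomial $f_0$, so that $CK[f_0]$ is defined and generalized partial-slice monogenic on all of $M$, makes explicit a point the paper leaves implicit.
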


\begin{definition}\label{Cauchy-Kovalevska-extension-Fueter}
For  $\mathrm{k}=(k_0,k_1,\ldots,k_{p})\in\mathbb{N}^{p+1}$ and $\bx_{p}^{\mathrm{k}}=x_0^{k_0}x_1^{k_1}\ldots x_p^{k_p}$, define
  $$V_{\mathrm{k}}(\bx)=\frac{1}{\mathrm{k}!}CK[\bx_{p}^{\mathrm{k}}](\bx),$$
 where $\mathrm{k}!=k_0!k_1!\cdots k_p!$.
\end{definition}
We now note that, by definition, we have
$$z_{\ell}(\bx)=  CK[x_{\ell}](\bx)=x_\ell+r\underline{\omega} v_\ell,  \quad \ell=0,1,\ldots,p,$$
  where $\bx=\bx_p+ r \underline{\omega}$ with $\bx_{p}\in\R^{p+1}, r \geq 0,$ and $\underline{\omega}  \in \mathbb S$.
Thus, using
 the same arguments to prove \cite[Theorem 6.12]{Xu-Sabadini-26}, we deduce the following:
\begin{theorem}\label{V-P}
 For each $\mathrm{k} \in \mathbb{N}^{p+1}$, there holds
 $ V_{\mathrm{k}}(\bx)= \mathcal{P}_{\mathrm{k}}(\bx),$ so that the polynomial $\mathcal{P}_{\mathrm{k}}$ is generalized partial-slice monogenic.
  \end{theorem}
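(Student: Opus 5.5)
We prove $V_{\mathrm{k}}=\mathcal{P}_{\mathrm{k}}$ by induction on $k=|\mathrm{k}|$, using the uniqueness of Theorem \ref{CK-slice-monogenic}. The point is to avoid verifying generalized partial-slice monogenicity of $\mathcal{P}_{\mathrm{k}}$ directly. Instead we show that both sides satisfy the same recursion and have the same restriction to $\mathbb{R}^{p+1}$.

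\textbf{Restriction to $r=0$.} On $\mathbb{R}^{p+1}$ every $z_\ell$ reduces to $x_\ell$, which is real. Hence
$$\mathcal{P}_{\mathrm{k}}(\bx_p)=\frac{1}{k!}\cdot\frac{k!}{\mathrm{k}!}\,\bx_p^{\mathrm{k}}=\frac{\bx_p^{\mathrm{k}}}{\mathrm{k}!}=V_{\mathrm{k}}(\bx_p).$$
By Theorem \ref{CK-slice-monogenic} it therefore suffices to show that $\mathcal{P}_{\mathrm{k}}$ is generalized partial-slice monogenic. Equivalently, we can show directly that $\mathcal{P}_{\mathrm{k}}=CK[\bx_p^{\mathrm{k}}]/\mathrm{k}!$.

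\textbf{Recursion.} We aim for the identity
$$k\,\mathcal{P}_{\mathrm{k}}=\sum_{\ell=0}^{p} z_\ell\,\mathcal{P}_{\mathrm{k}-\epsilon_\ell},$$
where $\epsilon_\ell$ is the $\ell$-th unit multi-index. This follows by grouping the permutations according to their first letter and then using Proposition \ref{no-order} to justify the order $(z_{i_1}(\cdots))_R$. The corresponding identity for $V_{\mathrm{k}}$ is
$$kV_{\mathrm{k}}=\sum_\ell z_\ell V_{\mathrm{k}-\epsilon_\ell}.$$
To get it, we expand both sides on a fixed slice $\bx_p+r\underline{\omega}$ via the series \eqref{series-CK}. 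Write
$$V_{\mathrm{k}-\epsilon_\ell}=A_\ell+\underline{\omega}B_\ell,$$
where $A_\ell,B_\ell$ are the even and odd series with coefficients $\Delta^j g_\ell$ and $\Delta^jD g_\ell$, and $g_\ell=\bx_p^{\mathrm{k}-\epsilon_\ell}/(\mathrm{k}-\epsilon_\ell)!$. Then
$$z_\ell V_{\mathrm{k}-\epsilon_\ell}=x_\ell A_\ell+x_\ell\,\underline{\omega}B_\ell+r(\underline{\omega}v_\ell)A_\ell+r(\underline{\omega}v_\ell)(\underline{\omega}B_\ell).$$

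\textbf{Simplifying the terms.} The middle terms need care because of non-associativity.
\begin{itemize}
\item For $(\underline{\omega}v_\ell)A_\ell$: the coefficients of $A_\ell$ lie in the real span of $\mathbb{A}$-valued polynomials of the form $\Delta^j\bx_p^{\mathrm{m}}$, which are real. So this term is simply $\underline{\omega}v_\ell$ times a real polynomial.
\item For $B_\ell$: its coefficients $\Delta^jD g_\ell$ lie in $\mathbb{R}^{p+1}$, i.e.\ they are paravector-valued in $v_0,\dots,v_p$, since $g_\ell$ is real.
\item The product $(\underline{\omega}v_\ell)(\underline{\omega}b)$ with $b\in\mathbb{R}^{p+1}$ is then handled by Lemma \ref{a-w-b}, the Moufang identities and Artin's theorem (all entries lie in $M$). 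We get $(\underline{\omega}v_\ell)(\underline{\omega}b)=v_\ell^{c}b$.
\end{itemize}
After this simplification, both sides become explicit sums of the form $\text{real}+\underline{\omega}\,(\text{paravector})+\dots$. Matching them reduces to scalar identities among the real polynomials $\Delta^j\bx_p^{\mathrm{m}}$, namely commutation relations such as
$$\Delta(x_\ell h)=x_\ell\Delta h+2\partial_\ell h,\qquad D(x_\ell h)=x_\ell Dh+v_\ell h.$$
These are summed over $\ell$ using Euler's identity $\sum_\ell x_\ell\partial_\ell h=\deg(h)\,h$ for homogeneous $h$.

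\textbf{Main obstacle.} The hardest step is this coefficient matching, i.e.\ verifying that the $r^{2j}$ and $r^{2j+1}$ coefficients agree. An alternative that avoids bookkeeping is to show that
$$W:=\sum_\ell z_\ell V_{\mathrm{k}-\epsilon_\ell}$$
is itself generalized partial-slice monogenic. Applying $D_{\underline{\omega}}$ to $W$, the Leibniz rule and the induction hypothesis leave the term
$$\sum_\ell \big(D_{\underline{\omega}} z_\ell\big)\cdot V_{\mathrm{k}-\epsilon_\ell}$$
plus cross terms. One checks that these cross terms cancel, using $D_{\underline{\omega}}z_\ell=0$ and the fact that $V_{\mathrm{k}-\epsilon_\ell}$ splits as real plus $\underline{\omega}$ times a paravector.

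In either route, $W$ restricts to $k\,\bx_p^{\mathrm{k}}/\mathrm{k}!$ on $\mathbb{R}^{p+1}$. Hence $W=kV_{\mathrm{k}}$ by Theorem \ref{CK-slice-monogenic}, and together with the recursion for $\mathcal{P}_{\mathrm{k}}$ and the induction hypothesis this gives $\mathcal{P}_{\mathrm{k}}=V_{\mathrm{k}}$.
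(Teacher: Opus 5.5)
The outer structure of your argument is sound, but the one nontrivial step, $kV_{\mathrm{k}}=\sum_\ell z_\ell V_{\mathrm{k}-\epsilon_\ell}$, is not established. The simplification you rely on is false, and the alternative route is only asserted.

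\textbf{What is correct.} $V_{\mathrm{k}}$ and $\mathcal{P}_{\mathrm{k}}$ agree on $\mathbb{R}^{p+1}$. The recursion $k\mathcal{P}_{\mathrm{k}}=\sum_\ell z_\ell\mathcal{P}_{\mathrm{k}-\epsilon_\ell}$ follows correctly from Proposition \ref{no-order}, using the right-to-left order and grouping by the first letter.

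\textbf{The false simplification.} You use $(\underline{\omega}v_\ell)(\underline{\omega}b)=v_\ell^{c}b$ for $b\in\mathbb{R}^{p+1}$. This fails already for $\ell=0$, where Artin's theorem gives $\underline{\omega}(\underline{\omega}b)=-b$. In the associative case the correct sign is $-v_\ell^{c}b$.

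In a non-associative algebra no termwise formula of this type exists at all. In $\mathbb{O}$ (say $p=2$), take $v_1,v_2,\underline{\omega}$ orthonormal imaginary units with $\underline{\omega}\perp v_1v_2$. Then $(\underline{\omega}v_1)(\underline{\omega}\cdot 1)=v_1=-v_1^{c}\cdot 1$. The middle Moufang identity instead gives $(\underline{\omega}v_1)(\underline{\omega}v_2)=-\underline{\omega}(v_1v_2)\underline{\omega}=-v_1v_2=+v_1^{c}v_2$. If you follow your formula, the real coefficient of $r^{2j}$ comes out as $k-4j$ instead of $k$, so the coefficient matching fails.

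\textbf{The alternative route.} Showing directly that $W$ is monogenic does not rescue this. Write $u_s\in\{v_0,\dots,v_p,\underline{\omega}\}$ for the coefficients of $D_{\underline{\omega}}$. Since $\mathbb{A}$ is neither commutative nor associative, $\sum_s u_s\big((\partial_s z_\ell)V\big)\neq(D_{\underline{\omega}}z_\ell)V$ and $\sum_s u_s(z_\ell\,\partial_sV)\neq z_\ell(D_{\underline{\omega}}V)$. So $D_{\underline{\omega}}z_\ell=0$ makes nothing vanish. The ``cross terms'' you say cancel are essentially the direct verification the paper describes as almost impossible.

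\textbf{The missing idea.} Sum over $\ell$ before simplifying, and use the symmetry of the Hessian. With $g=\bx_p^{\mathrm{k}}/\mathrm{k}!$ and $h=\Delta_{\bx_p}^{j}g$ (real-valued),
\[
\sum_{\ell=0}^{p}(\underline{\omega}v_\ell)\big(\underline{\omega}\,D_{\bx_p}\partial_\ell h\big)=\sum_{i,\ell=0}^{p}\partial_i\partial_\ell h\,(\underline{\omega}v_\ell)(\underline{\omega}v_i)=-\Delta_{\bx_p}h .
\]
Because the Hessian is symmetric, only the combination $(\underline{\omega}v_\ell)(\underline{\omega}v_i)+(\underline{\omega}v_i)(\underline{\omega}v_\ell)=-2\delta_{i\ell}$ survives. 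This identity is the polarization of $(\underline{\omega}c)^2=(\underline{\omega}c)(c^{c}\underline{\omega})=\underline{\omega}(cc^{c})\underline{\omega}=-|c|^2$ for $c\in\mathbb{R}^{p+1}$. That computation needs only Lemma \ref{a-w-b} (in the form $\underline{\omega}c=c^{c}\underline{\omega}$) and the middle Moufang identity.

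\textbf{With this repair the proof closes.} Since $\partial_\ell g=\bx_p^{\mathrm{k}-\epsilon_\ell}/(\mathrm{k}-\epsilon_\ell)!$, consider $\sum_\ell z_\ell\,CK[\partial_\ell g]$ and apply your Euler-identity bookkeeping.
\begin{itemize}
\item Relative to $\frac{r^{2j}}{(2j)!}(-\Delta_{\bx_p})^jg$, the real coefficients give $(k-2j)+2j=k$.
\item Relative to $\frac{r^{2j+1}}{(2j+1)!}(-\Delta_{\bx_p})^jD_{\bx_p}g$, the $\underline{\omega}$-coefficients give $(k-1-2j)+(2j+1)=k$.
\end{itemize}
Hence the sum equals $kV_{\mathrm{k}}$, and your induction goes through.
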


\subsection{Generalized CK-extension for monogenic  functions}
In this subsection, we shall establish   the generalized CK-extension for monogenic functions over
 real alternative $\ast$-algebras.
\begin{lemma} \label{CK-formular-Four}
For any function $g=g(\bx_p)\in C^{1}(\mathbb{R}^{p+1}, \mathbb{A})$ and  $k\in \mathbb{N}$, the following formulas  hold
\begin{equation}\label{CK-formular-1} D_{\bx_p} (\underline{\bx}_q^{2k}g) = \underline{\bx}_q^{2k}(D_{\bx_p}g), \end{equation}
\begin{equation}\label{CK-formular-2} D_{\bx_p} (\underline{\bx}_q^{2k+1}g)  = \underline{\bx}_q^{2k+1}(\overline{D}_{\bx_p}g),\end{equation}
\begin{equation}\label{CK-formular-3}
D_{\underline{\bx}_q} (\underline{\bx}_q^{2k}g) =
 -2k \underline{\bx}_q^{2k-1}g, \end{equation}
\begin{equation}\label{CK-formular-4} D_{\underline{\bx}_q}(\underline{\bx}_q^{2k+1}g)=-(2k+q) \underline{\bx}_q^{2k}g. \end{equation}
\end{lemma}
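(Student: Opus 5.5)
The plan is to reduce everything to two facts: $\underline{\bx}_q^{2}=-r^2$ is real, and $\underline{\bx}_q=r\underline{\omega}$ with $\underline{\omega}\in\mathbb S$ anticommutes with each $v_i$, $1\le i\le p$. The non-associativity of $\mathbb{A}$ is then handled by Lemma \ref{a-w-b}, Proposition \ref{artin} and Proposition \ref{real}.

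\emph{Even powers.} Since $\underline{\bx}_q^{2k}=(-1)^k|\underline{\bx}_q|^{2k}$ is a real scalar, it commutes and associates with everything (Proposition \ref{real}). Moreover it does not depend on $\bx_p$. Hence \eqref{CK-formular-1} is immediate.

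\emph{Odd powers in $\bx_p$.} Write $\underline{\bx}_q^{2k+1}=(-1)^k r^{2k}\underline{\bx}_q$. It then suffices to show
\[
D_{\bx_p}(\underline{\bx}_q g)=\underline{\bx}_q(\overline{D}_{\bx_p}g).
\]
Since $\underline{\bx}_q$ is independent of $\bx_p$, we have $D_{\bx_p}(\underline{\bx}_q g)=\sum_{i=0}^p v_i(\underline{\bx}_q\,\partial_{x_i}g)$. Apply Lemma \ref{a-w-b} with $a=v_i\in\mathbb R^{p+1}$ and $\underline{\omega}=\underline{\bx}_q/r$, scaling by $r$, and treating $r=0$ trivially. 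This gives $v_i(\underline{\bx}_q b)=\underline{\bx}_q(v_i^{c}b)$. Summing, and using $\sum_i v_i^{c}\partial_{x_i}=\overline{D}_{\bx_p}$, yields \eqref{CK-formular-2}.

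\emph{The operator $D_{\underline{\bx}_q}$.} For \eqref{CK-formular-3}, the scalar $\underline{\bx}_q^{2k}=(-1)^k(\sum_{s>p}x_s^2)^k$ has $\partial_{x_s}\underline{\bx}_q^{2k}=(-1)^k k r^{2k-2}\,2x_s$. So
\[
D_{\underline{\bx}_q}(\underline{\bx}_q^{2k}g)=\sum_s v_s\bigl(2k(-1)^k r^{2k-2}x_s g\bigr)=2k(-1)^kr^{2k-2}\,\underline{\bx}_q g,
\]
where the real coefficients pass freely. Since $(-1)^k r^{2k-2}\underline{\bx}_q=-\underline{\bx}_q^{2k-1}$, this gives $-2k\,\underline{\bx}_q^{2k-1}g$.

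For \eqref{CK-formular-4}, write $\underline{\bx}_q^{2k+1}g=(-1)^k r^{2k}(\underline{\bx}_q g)$ and apply the Leibniz rule with a real factor:
\[
D_{\underline{\bx}_q}\bigl(r^{2k}(\underline{\bx}_q g)\bigr)=2k r^{2k-2}\sum_s x_s v_s(\underline{\bx}_q g)+r^{2k}\sum_s v_s(v_s g).
\]
The first sum is $\underline{\bx}_q(\underline{\bx}_q g)$, which equals $\underline{\bx}_q^2 g=-r^2g$ by Proposition \ref{artin}. Alternativity gives $v_s(v_s g)=v_s^2g=-g$, so the second sum is $-q\,g$. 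Collecting terms gives $(-1)^k r^{2k}(-2k-q)g=-(2k+q)\underline{\bx}_q^{2k}g$.

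The only delicate point is the non-associative step in \eqref{CK-formular-2}. That step is fully covered by Lemma \ref{a-w-b}, together with the observation that every other product involved has either a real factor or two equal factors from $M$, where Proposition \ref{artin} applies. I expect no further obstacle.
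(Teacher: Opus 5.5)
Your proposal is correct and follows essentially the same route as the paper. Both arguments use the real scalar $\underline{\bx}_q^{2k}=(-1)^kr^{2k}$ and Lemma \ref{a-w-b} for \eqref{CK-formular-2}; for \eqref{CK-formular-4} both use the Leibniz rule with $v_s(v_sg)=v_s^2g$ and $\underline{\bx}_q(\underline{\bx}_qg)=\underline{\bx}_q^2g$, which is exactly the paper's observation $[D_{\underline{\bx}_q},\underline{\bx}_q,g]=0$ combined with Artin's theorem.
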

\begin{proof}
Let us first show (\ref{CK-formular-1}) and  (\ref{CK-formular-2}). Set $r=|\underline{\bx}_q|=(x_{p+1}^2+\ldots+x_{p+q}^2)^{1/2}$. Noting  that  $\underline{\bx}_q^{2k}=(-1)^{k}r^{2k}$ is real and does not depend on $x_i, i=0,1,\ldots,p$,  for any function $g=g(\bx_p)\in C^{1}(\mathbb{R}^{p+1}, \mathbb{A})$ and  $k\in \mathbb{N}$ it follows that
$$D_{\bx_p} (\underline{\bx}_q^{2k}g) = \underline{\bx}_q^{2k}(D_{\bx_p}g),  $$
and
$$D_{\bx_p} (\underline{\bx}_q^{2k+1}g)  =(\underline{\bx}_q^{2k}) D_{\bx_p} (\underline{\bx}_q g).$$
 Hence, by Lemma \ref{a-w-b}, we have
$$D_{\bx_p} (\underline{\bx}_q^{2k+1}g)  =(\underline{\bx}_q^{2k}) D_{\bx_p} (\underline{\bx}_q g)= \underline{\bx}_q^{2k+1}(\overline{D}_{\bx_p}g).$$
\\
Now it remains to prove (\ref{CK-formular-3}) and (\ref{CK-formular-4}). Direct calculations show that
$$D_{\underline{\bx}_q} (r^{2k}g) =  \sum_{i=p+1}^{p+q} (e_i \partial_{x_i} r^{2k})g =2k(r^{2k-2}\underline{\bx}_q)g.$$
\\
Noticing  also that $\underline{\bx}_q^{2k}=(-1)^{k}r^{2k},$ we get
$$D_{\underline{\bx}_q} (r^{2k}g) =(-1)^{k-1}2k(  \underline{\bx}_q^{2k-1})g= (-1)^{k-1} 2 k  \underline{\bx}_q^{2k-1} g.$$
Hence,
$$D_{\underline{\bx}_q} (\underline{\bx}_q^{2k}g) = (-1)^{k} D_{\underline{\bx}_q} (r^{2k}g)=
 -2k \underline{\bx}_q^{2k-1}g,$$
which gives (\ref{CK-formular-3}).

Then, we observe that
$$[D_{\underline{\bx}_q},\underline{\bx}_q, g ]=\sum_{i=p+1}^{p+q}\sum_{j=p+1}^{p+q}[e_i, e_j \partial_{x_i} x_j,g]=\sum_{i=p+1}^{p+q}\sum_{j=p+1}^{p+q}[e_i, e_j  ,g] \delta_{i,j}=0, $$
and we deduce
$$D_{\underline{\bx}_q}( \underline{\bx}_qg )= ( D_{\underline{\bx}_q} \underline{\bx}_q ) g=-qg. $$
Hence, (\ref{CK-formular-4}) can be proved as
\begin{eqnarray*}
  D_{\underline{\bx}_q}(\underline{\bx}_q^{2k+1}g)
 &=&D_{\underline{\bx}_q}(\underline{\bx}_q^{2k} (\underline{\bx}_qg))
 \\
  &=&(-1)^{k}\big((D_{\underline{\bx}_q} r^{2k})(\underline{\bx}_qg)+ r^{2k} D_{\underline{\bx}_q} (\underline{\bx}_qg) \big )
 \\
  &=&(-1)^{k}\big((  2kr^{2k-2} \underline{\bx}_q )(\underline{\bx}_qg)-q r^{2k}  g \big )
 \\
 &=&(-1)^{k}\big((  2kr^{2k-2}( \underline{\bx}_q  \underline{\bx}_q) g-q r^{2k}  g \big )
 \\
&=&(-1)^{k+1} (2k+q) r^{2k} g
\\
&=&-(2k+q) \underline{\bx}_q^{2k}g,
\end{eqnarray*}
where the third and forth equalities  follow from  the Artin's theorem. The proof is complete.
\end{proof}
\begin{lemma} \label{CK-formular-Four-D}
For any function $g=g(\bx_p)\in C^{1}(\mathbb{R}^{p+1}, \mathbb{A})$ and  $k\in \mathbb{N}$, the following formulas hold
\begin{equation}\label{CK-formular-1-D}
D_{\bx} (\underline{\bx}_q^{2k}g) = \underline{\bx}_q^{2k}(D_{\bx_p}g)-2k \underline{\bx}_q^{2k-1}g, \end{equation}
\begin{equation}\label{CK-formular-2-D}
 D_{\bx} (\underline{\bx}_q^{2k+1}g)  = \underline{\bx}_q^{2k+1}(\overline{D}_{\bx_p}g)-(2k+q) \underline{\bx}_q^{2k}g,
 \end{equation}
 \begin{equation}\label{CK-formular-1-D-bar}
 \overline{D}_{\bx} (\underline{\bx}_q^{2k}g) = \underline{\bx}_q^{2k}(\overline{D}_{\bx_p}g)+2k \underline{\bx}_q^{2k-1}g, \end{equation}
\begin{equation}\label{CK-formular-2-D-bar}
 \overline{D}_{\bx} (\underline{\bx}_q^{2k+1}g)  = \underline{\bx}_q^{2k+1}(D_{\bx_p}g)+(2k+q) \underline{\bx}_q^{2k}g.
 \end{equation}
\end{lemma}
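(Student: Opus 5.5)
The plan is to derive all four identities directly from Lemma \ref{CK-formular-Four}, using the splittings
$$D_{\bx}=D_{\bx_p}+D_{\underline{\bx}_q}, \qquad \overline{D}_{\bx}=\overline{D}_{\bx_p}-D_{\underline{\bx}_q}.$$
The second splitting holds because $\overline{D}_{\bx}=\partial_{x_0}-\sum_{s=1}^{m}v_s\partial_{x_s}$. Its first $p+1$ terms give exactly $\overline{D}_{\bx_p}=\partial_{x_0}-\sum_{s=1}^{p}v_s\partial_{x_s}$, and the remaining terms give $-\sum_{s=p+1}^{m}v_s\partial_{x_s}=-D_{\underline{\bx}_q}$. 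Both operators act linearly on the function $\underline{\bx}_q^{j}g$, so no associativity issue arises when we split them: we only sum left multiplications by the fixed basis elements $v_s$ applied to partial derivatives.

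For \eqref{CK-formular-1-D}, we add \eqref{CK-formular-1} and \eqref{CK-formular-3}. For \eqref{CK-formular-2-D}, we add \eqref{CK-formular-2} and \eqref{CK-formular-4}. Both follow at once.

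For the conjugate operator we also need the analogues of \eqref{CK-formular-1} and \eqref{CK-formular-2} with $D_{\bx_p}$ replaced by $\overline{D}_{\bx_p}$.
\begin{itemize}
\item Even powers: $\underline{\bx}_q^{2k}=(-1)^kr^{2k}$ is real and independent of $\bx_p$. Hence $\overline{D}_{\bx_p}(\underline{\bx}_q^{2k}g)=\underline{\bx}_q^{2k}(\overline{D}_{\bx_p}g)$, by Proposition \ref{real} and the fact that real scalars commute with everything.
\item Odd powers: we factor out the real scalar $\underline{\bx}_q^{2k}$ and apply Lemma \ref{a-w-b} with $a=v_i^c$, which lies in $\mathbb{R}^{p+1}$. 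Writing $\underline{\bx}_q=r\underline{\omega}$, we have $v_i^c(\underline{\bx}_q b)=\underline{\bx}_q(v_i b)$. Summing over $i$ gives $\overline{D}_{\bx_p}(\underline{\bx}_q g)=\underline{\bx}_q(D_{\bx_p}g)$, and therefore $\overline{D}_{\bx_p}(\underline{\bx}_q^{2k+1}g)=\underline{\bx}_q^{2k+1}(D_{\bx_p}g)$. When $\underline{\bx}_q=0$ both sides vanish.
\end{itemize}

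Combining these with $-D_{\underline{\bx}_q}$ applied through \eqref{CK-formular-3} and \eqref{CK-formular-4} flips the signs of the $2k$ and $(2k+q)$ terms. This yields \eqref{CK-formular-1-D-bar} and \eqref{CK-formular-2-D-bar}.

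The only delicate point is the odd-power conjugate identity, where non-associativity could interfere. Lemma \ref{a-w-b} already handles that, since it holds for every $b\in\mathbb{A}$. The remaining steps are bookkeeping; the one convention to note is that the term $2k\,\underline{\bx}_q^{2k-1}g$ is read as zero when $k=0$.
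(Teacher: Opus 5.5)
Your proposal is correct and follows the paper's route: the paper likewise obtains \eqref{CK-formular-1-D} and \eqref{CK-formular-2-D} by adding the corresponding identities of Lemma \ref{CK-formular-Four}, and it disposes of the conjugate versions with a single ``similarly''. Your splitting $\overline{D}_{\bx}=\overline{D}_{\bx_p}-D_{\underline{\bx}_q}$, together with the identity $\overline{D}_{\bx_p}(\underline{\bx}_q g)=\underline{\bx}_q(D_{\bx_p}g)$ obtained from Lemma \ref{a-w-b} with $a=v_i^c$, is exactly what that ``similarly'' leaves implicit.
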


\begin{proof}
From   (\ref{CK-formular-1}) and  (\ref{CK-formular-3}) we get directly (\ref{CK-formular-1-D}), while (\ref{CK-formular-2-D}) follows from   (\ref{CK-formular-2}) and  (\ref{CK-formular-4}). Similarly, we can prove (\ref{CK-formular-1-D-bar}) and (\ref{CK-formular-2-D-bar}).
\end{proof}

\begin{theorem}[Generalized CK-extension]\label{generalized-CK-extension}
Let  $\Omega_{0}\subseteq \mathbb{R}^{p+1}$ be a domain and  consider the real analytic function   $A_{0} \in C^{\omega}(\Omega_0, \mathbb{A})$. Then there exists a unique  sequence of  real analytic functions $\{A_{k}\}_{k=1}^{\infty} \subset C^{\omega}(\Omega_0, \mathbb{A}) $ such that the series
\begin{equation}\label{Cauchy-Kovalevska-extension-sum}
f(\bx)  = \sum_{k=0}^{+\infty} \underline{\bx}_{q}^{k}A_{k}(\bx_p) \end{equation}
is convergent in a p-symmetric  slice domain $\Omega \subseteq \Omega_{0}^{\ast}$ with $\Omega_{0} \subset \Omega$ and its sum $f$ is monogenic in $\Omega$.
Furthermore,  the function $f$ can be written formally as
\begin{equation}\label{Cauchy-Kovalevska-extension-expression}
 f(\bx) =\Gamma(\frac{q}{2})\Big( \frac{r\sqrt{\Delta_{\bx_{p}}}}{2}\Big)^{-\frac{q}{2}} \Big[ \frac{r\sqrt{\Delta_{\bx_{p}}}}{2} J_{\frac{q}{2}-1}(r\sqrt{\Delta_{\bx_{p}}})[A_{0}(\bx_{p})] + \frac{\underline{\bx}_{q}}{2} J_{\frac{q}{2}}(r\sqrt{\Delta_{\bx_{p}}}) [D_{\bx_{p}} A_{0}(\bx_{p})] \Big] , \end{equation}
where $r=|\underline{\bx}_{q}|$, $\sqrt{\Delta_{\bx_{p}}}$ denotes the square root of the Laplacian $\Delta_{\bx_{p}}$ (of which only even powers
occur in the resulting series),  and $J_{n}$ is the Bessel function of the first kind of order $n$ given by
$$J_{n}(x)= \sum_{k=0}^{+\infty}\frac{(-1)^{k}(x/2)^{n+2k}}{k!\Gamma(n+k+1)}.$$
\end{theorem}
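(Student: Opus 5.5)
Apply $D_{\bx}=D_{\bx_p}+D_{\underline{\bx}_q}$ to the ansatz \eqref{Cauchy-Kovalevska-extension-sum} term by term, using Lemma \ref{CK-formular-Four-D}, and collect coefficients of equal powers of $\underline{\bx}_q$. By \eqref{CK-formular-1-D} and \eqref{CK-formular-2-D},
$$D_{\bx} f=\sum_{k\ge0}\underline{\bx}_q^{2k}\big(D_{\bx_p}A_{2k}-(2k+q)A_{2k+1}\big)+\sum_{k\ge0}\underline{\bx}_q^{2k+1}\big(\overline{D}_{\bx_p}A_{2k+1}-(2k+2)A_{2k+2}\big).$$
Each power $\underline{\bx}_q^{j}$ is either real or a real multiple of $\underline{\bx}_q$, so the two sums are the $\mathbb{R}$-part and the $\underline{\omega}$-part along each slice. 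Because $\underline{\omega}$ ranges over a whole sphere, both parts must vanish separately: on each slice $\underline{\omega}$ enters only through left multiplication by $\underline{\omega}$, and $\underline{\omega}b=0$ forces $b=0$ by Artin's theorem. Hence monogenicity is equivalent to the recursion
$$A_{2k+1}=\frac{1}{2k+q}D_{\bx_p}A_{2k},\qquad A_{2k+2}=\frac{1}{2k+2}\overline{D}_{\bx_p}A_{2k+1}.$$
This recursion determines all $A_k$ uniquely from $A_0$, which gives both existence and uniqueness of the sequence.

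Next solve the recursion in closed form. Lemma \ref{d-dbar-laplace} gives $\overline{D}_{\bx_p}D_{\bx_p}=\Delta_{\bx_p}$ acting on $\mathbb{A}$-valued functions of $\bx_p$; this is the same lemma applied in dimension $p+1$, since $(1,v_1,\dots,v_p)$ spans a hypercomplex subspace. Therefore
$$A_{2k}=\frac{\Delta_{\bx_p}^kA_0}{\prod_{j=0}^{k-1}(2j+2)(2j+q)}=\frac{\Gamma(q/2)}{4^k\,k!\,\Gamma(k+q/2)}\Delta_{\bx_p}^kA_0,\qquad A_{2k+1}=\frac{\Gamma(q/2)}{2\cdot4^k\,k!\,\Gamma(k+1+q/2)}\Delta_{\bx_p}^kD_{\bx_p}A_0.$$
The second formula uses that $D_{\bx_p}$ commutes with $\Delta_{\bx_p}$, which has real scalar coefficients. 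Substituting $\underline{\bx}_q^{2k}=(-1)^kr^{2k}$ and comparing with the series of $J_{q/2-1}$ and $J_{q/2}$ yields \eqref{Cauchy-Kovalevska-extension-expression}.

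For convergence, use the Cauchy-type estimate from the proof of Theorem \ref{slice-Cauchy-Kovalevsky-extension}: $|\Delta^k_{\bx_p}A_0|\le\lambda(2k)!C^{2k}$ on compacts $K$, and similarly $|\Delta^k_{\bx_p}D_{\bx_p}A_0|\le\lambda'(2k+1)!C^{2k+1}$. Since $(2k)!/(4^kk!\,\Gamma(k+q/2))$ grows at most polynomially, the series is dominated by $\sum (Cr)^{2k}\cdot\mathrm{poly}(k)$. It therefore converges normally, together with its derivatives (real analyticity), on the same $p$-symmetric slice domain $\Omega=\bigcup_K\mathring K\times\{|\underline{\bx}_q|<1/C_K\}$. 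This justifies the termwise differentiation used in the first step.

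The main obstacle is the non-associativity in the first step. Termwise application of $D_{\underline{\bx}_q}$ to $\underline{\bx}_q^{j}A_j$ must avoid rebracketing products such as $e_i(\underline{\bx}_q A)$, and the separation into real and $\underline{\omega}$ parts must be justified without associativity. Lemma \ref{CK-formular-Four} has already handled the first issue. For the second, I would argue slice-wise as above: fix $\underline{\omega}$, write $D_{\bx}f=\alpha(\bx_p,r)+\underline{\omega}\beta(\bx_p,r)$ with $\alpha,\beta$ independent of $\underline{\omega}$, and replace $\underline{\omega}$ by $-\underline{\omega}$. This gives $\alpha=\beta=0$ for small $r$, and power-series identification in $r$ then yields the recursion.
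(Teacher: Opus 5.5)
Your proposal is correct and follows the paper's proof essentially step by step: apply $D_{\bx}$ termwise via Lemma \ref{CK-formular-Four-D}, extract the two-term recursion for $A_{2k+1}$ and $A_{2k+2}$, solve it through Lemma \ref{d-dbar-laplace} to reach the Gamma/Bessel form, and get convergence from the real-analytic estimate used for Theorem \ref{slice-Cauchy-Kovalevsky-extension}. Two points the paper passes over silently are made explicit in your version: the $\pm\underline{\omega}$ argument with Artin's theorem, which separates the real and $\underline{\omega}$ parts, and the remark on justifying termwise differentiation.
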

\begin{proof}
First, let us show  the uniqueness, i.e.   if the series in (\ref{Cauchy-Kovalevska-extension-sum}) exists  and is convergent, then it is unique and given by (\ref{Cauchy-Kovalevska-extension-expression}). To this end,  applying the operator $D_{\bx}$ to the  sum $f(\bx)$ in (\ref{Cauchy-Kovalevska-extension-sum}), we have by Lemma  \ref{CK-formular-Four-D}
 \begin{eqnarray}\label{D-f}
D_{\bx} f(\bx)&=&
   \sum_{k=0}^{+\infty}D_{\bx} (\underline{\bx}_{q}^{2k}A_{2k}(\bx_p)  )+  D_{\bx} (\underline{\bx}_{q}^{2k+1}A_{2k+1}(\bx_p) ) \notag
 \\
  &=&\sum_{k=0}^{+\infty} \Big(\underline{\bx}_{q}^{2k} (D_{\bx_p}A_{2k}(\bx_p))- 2k\underline{\bx}_{q}^{2k-1}A_{2k}(\bx_p)\notag
 \\
  & &\quad + \underline{\bx}_{q}^{2k+1} ( \overline{D}_{\bx_p}A_{2k+1}(\bx_p) )-(2k+q) \underline{\bx}_{q}^{2k} A_{2k+1}(\bx_p)   \Big)\notag\\
  &=&\sum_{k=0}^{+\infty} \Big(\underline{\bx}_{q}^{2k} (D_{\bx_p}A_{2k}(\bx_p)-(2k+q) A_{2k+1}(\bx_p))\notag\\
  &&\quad +
  \underline{\bx}_{q}^{2k+1} ( \overline{D}_{\bx_p}A_{2k+1}(\bx_p) -(2k+2) A_{2k+2}(\bx_p) )   \Big).
\end{eqnarray}
Hence,  if $f$ is monogenic, then for all $k\in \mathbb{N}$,
\begin{eqnarray}\label{inter-relation-f}
 \left\{
\begin{array}{ll}
(2k+q) A_{2k+1}(\bx_{p})=D_{\bx_p} A_{2k}(\bx_{p}),
\\
(2k+2) A_{2k+2}(\bx_{p})=\overline{D}_{\bx_p} A_{2k+1}(\bx_{p}),
\end{array}
\right.
\end{eqnarray}
which implies that
\begin{eqnarray*}
A_{2k}(\bx_{p})& =&\frac{1}{2k}\overline{D}_{\bx_p} A_{2k-1}(\bx_{p})\\
 &=&\frac{1}{2k(2k+q-2)}   \overline{D}_{\bx_p} (D_{\bx_p}A_{2k-2}(\bx_{p}))
 \\
&=&\frac{1}{2k(2k+q-2)}   \Delta_{\bx_p} A_{2k-2}(\bx_{p}),\end{eqnarray*}
where the last equality follows from  Lemma \ref{d-dbar-laplace}.  \\
Using repeatedly this formula, we get
$$A_{2k}(\bx_{p})=\frac{ \Delta_{\bx_p}^{k} A_{0}(\bx_{p})}{(2k)!!(2k+q-2)(2k+q-4)\cdots q}
=\frac{\Gamma(\frac{q}{2})}{2^{2k}k!\Gamma(k+\frac{q}{2})}   \Delta_{\bx_p}^{k} A_{0}(\bx_{p}).$$
Similarly,    it follows from (\ref{inter-relation-f})  and Lemma \ref{d-dbar-laplace} that
\begin{eqnarray*}
A_{2k+1}(\bx_{p})& =&\frac{1}{2k+q} D_{\bx_p} A_{2k}(\bx_{p})\\
 &=& \frac{1}{2k(2k+q)}  D_{\bx_p}( \overline{D}_{\bx_p} A_{2k-1}(\bx_{p}))
 \\
&=&\frac{1}{2k(2k+q)}  \Delta_{\bx_p} A_{2k-1}(\bx_{p})\\
&=&\cdots
\\
&=&\frac{1}{(2k)!!(2k+q)(2k+q-2)\cdots (q+2)}   \Delta_{\bx_p}^{k} A_{1}(\bx_{p}),
\\
&=&\frac{1}{(2k)!!(2k+q)(2k+q-2)\cdots (q+2)q}   \Delta_{\bx_p}^{k} D_{\bx_p} A_{0}(\bx_{p})
\\
&=&\frac{\Gamma(\frac{q}{2})}{2^{2k+1}k!\Gamma(k+\frac{q}{2}+1)} \Delta_{\bx_p}^{k} D_{\bx_p}A_{0}(\bx_{p}).
\end{eqnarray*}
Consequently,  the expression of $f$ is given by
\begin{equation}\label{Cauchy-Kovalevska-extension-sum-1-1}
f(\bx)=\sum_{k=0}^{+\infty} \frac{\Gamma(\frac{q}{2})(-1)^{k}r^{2k}}{2^{2k}k!\Gamma(k+\frac{q}{2})}   \Delta_{\bx_p}^{k} A_{0}(\bx_{p})+
\underline{\bx}_q  \sum_{k=0}^{+\infty} \frac{\Gamma(\frac{q}{2})(-1)^{k}r^{2k}}{2^{2k+1}k!\Gamma(k+\frac{q}{2}+1)} \Delta_{\bx_p}^{k} (D_{\bx_p}A_{0}(\bx_{p}))\end{equation}
 and thus $f$ takes the form (\ref{Cauchy-Kovalevska-extension-expression}) and so is unique. Finally, following the classical method as in \cite[Theorem 14.8]{Brackx}, we can  show   that the two series in (\ref{Cauchy-Kovalevska-extension-sum-1-1}) both converge  in a  p-symmetric slice domain $\Omega \subseteq \Omega_{0}^{\ast}$ with $\Omega_{0} \subset \Omega$. Here we only consider the first series in (\ref{Cauchy-Kovalevska-extension-sum-1-1}), the convergence of the other can be obtained similarly.  From the proof of Theorem \ref{slice-Cauchy-Kovalevsky-extension},
given $A_{0} \in  C^{\omega}(\Omega_{0}, \mathbb{A})$, there exist  some constants $\lambda=\lambda_{p,K}, C=C_{K}>0$ such that
$$| \Delta_{\bx_p}^{k} A_{0}(\bx_{p}) |\leq \lambda (2k)! C^{2k},  \quad   \bx_{p}\in K,  k\in    \mathbb{N}.  $$
which implies that, for all $\bx_{p}\in K$,
$$\Big|\sum_{k=0}^{+\infty} \frac{  (-1)^{k}r^{2k}}{2^{2k}k!\Gamma(k+\frac{q}{2})}   \Delta_{\bx_p}^{k} A_{0}(\bx_{p})\Big|
\leq \lambda  \sum_{k=0}^{+\infty} \frac{  r^{2k}(2k)! C^{2k} }{2^{2k}k!\Gamma(k+\frac{q}{2})}.   $$
Observing that   the series on the right side of the above inequality converges for $r<C^{-1}$, we infer that
the series $$\sum_{k=0}^{+\infty} \frac{\Gamma(\frac{q}{2})(-1)^{k}r^{2k}}{2^{2k}k!\Gamma(k+\frac{q}{2})}   \Delta_{\bx_p}^{k} A_{0}(\bx_{p})$$ converges  normally in the  p-symmetric slice domain
 $$\Omega=\bigcup_{K\subset  \Omega_{0}} \mathring{K} \times \{\underline{\bx}_{q}\in \mathbb{R}^{q}: |\underline{\bx}_{q}|<\frac{1}{C_{K}}\}.$$
The proof is complete.
\end{proof}

Theorem \ref{generalized-CK-extension} allows to introduce the following:
\begin{definition}
 The function given by the series in  (\ref{Cauchy-Kovalevska-extension-sum}) is called  monogenic  generalized CK-extension of  $A_0$ and is  denoted by $GCK[A_{0}]$.
 \end{definition}
 Note that $GCK[A_{0}]=CK[A_{0}]$ when $q=1$.
\begin{remark}\label{axial}
Setting $\underline{\bx}_{q}= \underline{\omega}r$, $r\geq 0$, it is clear that the function in
\eqref{Cauchy-Kovalevska-extension-sum} can be rewritten as
$$f(\bx)  = \sum_{k=0}^{+\infty}\big( (-1)^kr^{2k}A_{2k}(\bx_p)+\underline{\omega}(-1)^kr^{2k+1}A_{2k+1}(\bx_p)\big)=F_1(\bx')+\underline{\omega}F_2(\bx'),$$
 where $\bx'=(\bx_p,r)$. It is then immediate that $F_1(\bx_\diamond')=F_1(\bx')$ and $F_2(\bx_\diamond')=-F_2(\bx')$ and thus $f$ is a generalized partial-slice function. Thus $f$ in
\eqref{Cauchy-Kovalevska-extension-sum} belongs to the set $\mathcal{M}(\Omega,\mathbb{A})\cap \mathcal{GS}(\Omega,\mathbb{A})$ that later  shall be denoted by $\mathcal{AM}(\Omega,\mathbb{A})$. When $\mathbb{A}=\mathbb{R}_{0,m}$ with $M=\mathbb{R}^{m+1}$, it is the well-known set of axially monogenic functions, which is the range of the Fueter-Sce map.

\end{remark}

\subsection{Generalized CK-extension for harmonic functions}
In this subsection, we shall establish   the generalized CK-extension for harmonic functions over
 real alternative $\ast$-algebras.
\begin{theorem}[Harmonic generalized CK-extension]\label{Harm-generalized-CK-extension}
Let  $\Omega_{0}\subseteq \mathbb{R}^{p+1}$ be a domain and  consider two given real analytic  functions $A_{k} \in C^{\omega}(\Omega_0, \mathbb{A})$, $k=0,1$. Then there exists a unique sequence of real analytic functions $\{A_{k}\}_{k=2}^{\infty}\subset C^{\omega}(\Omega_0, \mathbb{A}) $ such that the series
\begin{equation}\label{Harm-Cauchy-Kovalevska-extension-sum}
f(\bx)  =  \sum_{k=0}^{+\infty} \underline{\bx}_{q}^{k}A_{k}(\bx_p) \end{equation}
is convergent in a p-symmetric  slice domain $\Omega \subseteq \Omega_{0}^{\ast}$ with $\Omega_{0} \subset \Omega$ and its sum $f$ is harmonic  in $\Omega$.
Furthermore,  the function $f$ can be written in a formal way as
\begin{equation}\label{Harm-Cauchy-Kovalevska-extension-expression}
 f(\bx) =\Gamma(\frac{q}{2})\Big( \frac{r\sqrt{\Delta_{\bx_{p}}}}{2}\Big)^{-\frac{q}{2}} \Big[ \frac{r\sqrt{\Delta_{\bx_{p}}}}{2} J_{\frac{q}{2}-1}(r\sqrt{\Delta_{\bx_{p}}})[A_{0}(\bx_{p})] + \frac{q\underline{\bx}_{q} }{2} J_{\frac{q}{2}}(r\sqrt{\Delta_{\bx_{p}}}) [A_{1}(\bx_{p})] \Big] , \end{equation}
and satisfies  the initial condition
\begin{eqnarray}\label{initial-conditions-f}
 \left\{
\begin{array}{ll}
f(\bx)\mid_{\underline{\bx}_{q}=0}= A_{0}(\bx_{p}),
\\
 D_{\underline{\bx}_q}f(\bx)\mid_{\underline{\bx}_{q}=0}=-q A_{1}(\bx_{p}).
\end{array}
\right.
\end{eqnarray}
\end{theorem}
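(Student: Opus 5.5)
We follow the proof of Theorem \ref{generalized-CK-extension}, replacing the first-order operator $D_{\bx}$ by the Laplacian, which we factor as $\Delta_{\bx}=\overline{D}_{\bx}D_{\bx}$ (Lemma \ref{d-dbar-laplace}). The argument has four steps: compute $\Delta_{\bx}$ on the monomials $\underline{\bx}_q^{k}A_k(\bx_p)$; read off a recursion for the coefficients; solve it in closed form; and prove convergence.

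\emph{Step 1: action of the Laplacian on monomials.} Apply $D_{\bx}$ and then $\overline{D}_{\bx}$ using Lemma \ref{CK-formular-Four-D}. All intermediate functions have the form $\underline{\bx}_q^{j}h(\bx_p)$, so each lemma applies at each stage. For the even monomial, \eqref{CK-formular-1-D}, then \eqref{CK-formular-1-D-bar} on the first term and \eqref{CK-formular-2-D-bar} (with $k-1$) on the second, give
\begin{equation*}
\Delta_{\bx}(\underline{\bx}_q^{2k}g)=\underline{\bx}_q^{2k}\Delta_{\bx_p}g-2k(2k-2+q)\underline{\bx}_q^{2k-2}g .
\end{equation*}
The cross terms $\pm 2k\,\underline{\bx}_q^{2k-1}D_{\bx_p}g$ cancel. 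For the odd monomial, \eqref{CK-formular-2-D} followed by \eqref{CK-formular-2-D-bar} and \eqref{CK-formular-1-D-bar} give
\begin{equation*}
\Delta_{\bx}(\underline{\bx}_q^{2k+1}g)=\underline{\bx}_q^{2k+1}\Delta_{\bx_p}g-2k(2k+q)\underline{\bx}_q^{2k-1}g .
\end{equation*}
Here we also use $D_{\bx_p}\overline{D}_{\bx_p}=\Delta_{\bx_p}$, again from Lemma \ref{d-dbar-laplace}. Lemma \ref{CK-formular-Four-D} already takes care of the non-associativity.

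\emph{Step 2: recursion.} Collect the coefficients of $\underline{\bx}_q^{j}$ in $\Delta_{\bx}f=0$. This yields
\begin{equation*}
2k(2k+q-2)A_{2k}=\Delta_{\bx_p}A_{2k-2},\qquad 2k(2k+q)A_{2k+1}=\Delta_{\bx_p}A_{2k-1},\qquad k\ge 1 .
\end{equation*}
Thus $A_0$ and $A_1$ are free, and every $A_k$ with $k\ge2$ is uniquely determined. This is the uniqueness claim.

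\emph{Step 3: closed form.} Iterating the recursion gives
\begin{equation*}
A_{2k}=\frac{\Gamma(\frac q2)}{2^{2k}k!\Gamma(k+\frac q2)}\Delta_{\bx_p}^kA_0,\qquad A_{2k+1}=\frac{\Gamma(\frac q2+1)}{2^{2k}k!\Gamma(k+\frac q2+1)}\Delta_{\bx_p}^kA_1 .
\end{equation*}
Substituting $\underline{\bx}_q^{2k}=(-1)^kr^{2k}$ and comparing with the Bessel series, exactly as in \eqref{Cauchy-Kovalevska-extension-sum-1-1}, reproduces \eqref{Harm-Cauchy-Kovalevska-extension-expression}. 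The extra factor $q$ comes from $\Gamma(\frac q2+1)=\frac q2\Gamma(\frac q2)$.

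\emph{Step 4: convergence and initial data.} We reuse the Cauchy-type estimate $|\Delta_{\bx_p}^kA_i|\le\lambda(2k)!C^{2k}$ on compact sets $K\subset\Omega_0$. It gives normal convergence of both series, together with all their derivatives, on the same p-symmetric slice domain $\Omega$ as in the proof of Theorem \ref{generalized-CK-extension}. This justifies applying $\Delta_{\bx}$ termwise, so by Steps 1–2 the sum is harmonic.

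For the initial conditions, setting $\underline{\bx}_q=0$ gives $f=A_0$. For the second condition, \eqref{CK-formular-3} and \eqref{CK-formular-4} show that $D_{\underline{\bx}_q}(\underline{\bx}_q^{j}A_j)$ vanishes at $\underline{\bx}_q=0$ for every $j\ge2$. The remaining terms give $D_{\underline{\bx}_q}(\underline{\bx}_qA_1)=-qA_1$ and $D_{\underline{\bx}_q}A_0=0$.

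The main obstacle is Step 1: keeping track of the order of multiplication so that the lemma applies at each stage, and checking the cancellation of the cross terms. Everything after that is bookkeeping.
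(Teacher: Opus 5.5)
Your proposal is correct and follows essentially the same route as the paper. The paper applies $\overline{D}_{\bx}$ (via Lemma~\ref{CK-formular-Four-D}) to the expansion \eqref{D-f} of $D_{\bx}f$ already computed in the monogenic case, rather than computing $\Delta_{\bx}$ monomial by monomial, but this yields the same recursion, the same closed form, and the same check of the initial conditions via Lemma~\ref{CK-formular-Four}; your Step 4 is, if anything, more explicit than the paper about justifying termwise differentiation.
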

\begin{proof}
  Given the   initial real analytic  functions  $A_{k} \in C^{\omega}(\Omega_0, \mathbb{A})$, $k=0,1$,  we shall prove the existence and uniqueness of the harmonic extension (\ref{Harm-Cauchy-Kovalevska-extension-expression}). The existence of such a function in  (\ref{Harm-Cauchy-Kovalevska-extension-expression}) can be obtained  reasoning as in the proof of Theorem \ref{generalized-CK-extension}.   The uniqueness can be shown as follows.  By applying the Laplacian $\Delta_{\bx}$ to (\ref{Harm-Cauchy-Kovalevska-extension-sum}) we have, by Lemma \ref{d-dbar-laplace}, (\ref{D-f}),  and Lemma \ref{CK-formular-Four}, that
   \begin{eqnarray*}
\Delta_{\bx} f(\bx)&=&\overline{D}_{\bx} (D_{\bx} f(\bx))
 \\
  &=&\sum_{k=0}^{+\infty} \Big(\overline{D}_{\bx}  (\underline{\bx}_{q}^{2k} (D_{\bx_p}A_{2k}(\bx_p)-(2k+q) A_{2k+1}(\bx_p)))\\
  &&\quad +
  \overline{D}_{\bx} (\underline{\bx}_{q}^{2k+1} ( \overline{D}_{\bx_p}A_{2k+1}(\bx_p) -(2k+2) A_{2k+2}(\bx_p) )  ) \Big)
 \\
  &=&\sum_{k=0}^{+\infty}  \Big( \underline{\bx}_{q}^{2k}( \overline{D}_{\bx_p}  (D_{\bx_p}A_{2k}(\bx_p)-(2k+q) A_{2k+1}(\bx_p)))\\
  &&\quad + 2k \underline{\bx}_{q}^{2k-1} (D_{\bx_p}A_{2k}(\bx_p)-(2k+q) A_{2k+1}(\bx_p))
  \\
  &&\quad+
  \underline{\bx}_{q}^{2k+1} ( D_{\bx_p} (  \overline{D}_{\bx_p}A_{2k+1}(\bx_p) -(2k+2) A_{2k+2}(\bx_p) ))
  \\
  && \quad + (2k+q)   \underline{\bx}_{q}^{2k}  ( \overline{D}_{\bx_p}A_{2k+1}(\bx_p) -(2k+2) A_{2k+2}(\bx_p) ) \Big)
  \\
  &=&\sum_{k=0}^{+\infty}  \Big( \underline{\bx}_{q}^{2k}(  \Delta_{\bx_p}A_{2k}(\bx_p) -(2k+q)(2k+2) A_{2k+2}(\bx_p)  )\\
  &&\quad +
  \underline{\bx}_{q}^{2k+1} (  \Delta_{\bx_p}A_{2k+1}(\bx_p) -(2k+2) (2k+q+2)A_{2k+3}(\bx_p) )
 \Big).
\end{eqnarray*}
Hence, the harmonicity of $f$ gives that for all $k\in \mathbb{N}$,
\begin{eqnarray*}\label{harmonic-condition}
 \left\{
\begin{array}{ll}
\Delta_{\bx_p}A_{2k}(\bx_p)=(2k+q)(2k+2) A_{2k+2}(\bx_p),
\\
\Delta_{\bx_p}A_{2k+1}(\bx_p)=(2k+2) (2k+q+2)A_{2k+3}(\bx_p),
\end{array}
\right.
\end{eqnarray*}
which imply that
\begin{eqnarray*}
A_{2k}(\bx_p)& =&\frac{1}{2k(2k+q-2)} \Delta_{\bx_p}A_{2k-2}(\bx_p)\\
 &=&\cdots
 \\
&=&\frac{ \Delta_{\bx_p}^{k} A_{0}(\bx_{p})}{(2k)!!(2k+q-2)(2k+q-4)\cdots q}
\\
&=&\frac{\Gamma(\frac{q}{2})}{2^{2k}k!\Gamma(k+\frac{q}{2})}   \Delta_{\bx_p}^{k} A_{0}(\bx_{p}),\end{eqnarray*}
and, similarly,
$$A_{2k+1}(\bx_p)=\frac{\Gamma(\frac{q}{2}+1)}{2^{2k}k!\Gamma(k+\frac{q}{2}+1)}   \Delta_{\bx_p}^{k} A_{1}(\bx_{p}).$$
Consequently,   $f$ has the expansion
\begin{equation}\label{Cauchy-Kovalevska-extension-sum-1-1-harmonic}
 f(\bx)=\sum_{k=0}^{+\infty} \frac{\Gamma(\frac{q}{2})(-1)^{k}r^{2k}}{2^{2k}k!\Gamma(k+\frac{q}{2})}   \Delta_{\bx_p}^{k} A_{0}(\bx_{p})+
\underline{\bx}_q     \sum_{k=0}^{+\infty} \frac{\Gamma(\frac{q}{2}+1)(-1)^{k}r^{2k}}{2^{2k}k!\Gamma(k+\frac{q}{2}+1)} \Delta_{\bx_p}^{k} A_{1}(\bx_{p}),
\end{equation}
thus $f$ has the  closed form in (\ref{Harm-Cauchy-Kovalevska-extension-expression}):
$$f(\bx)=\Gamma(\frac{q}{2})\Big( \frac{r\sqrt{\Delta_{\bx_{p}}}}{2}\Big)^{-\frac{q}{2}} \Big[ \frac{r\sqrt{\Delta_{\bx_{p}}}}{2} J_{\frac{q}{2}-1}(r\sqrt{\Delta_{\bx_{p}}})[A_{0}(\bx_{p})] + \frac{q\underline{\bx}_{q} }{2} J_{\frac{q}{2}}(r\sqrt{\Delta_{\bx_{p}}}) [A_{1}(\bx_{p})] \Big].$$

Finally, we show the  function given by (\ref{Harm-Cauchy-Kovalevska-extension-sum}) satisfies  the initial conditions
 (\ref{initial-conditions-f}).    By  Lemma  \ref{CK-formular-Four}, it holds that, for any function $g(\bx_p)\in C^{1}(\mathbb{R}^{p+1}, \mathbb{A})$,
 $$D_{\underline{\bx}_q} (\underline{\bx}_q^{k}g(\bx_p)) \mid_{\underline{\bx}_{q}}=0, \quad k\geq2,$$
and
$$D_{\underline{\bx}_q} (\underline{\bx}_q g(\bx_p)) =-qg(\bx_p).$$
Hence,
$$ D_{\underline{\bx}_q}f(\bx)\mid_{\underline{\bx}_{q}=0}=-q A_{1}(\bx_{p}).$$
Furthermore, the formula $f(\bx)\mid_{\underline{\bx}_{q}=0}= A_{0}(\bx_{p})$ holds trivially.
The proof is complete.
\end{proof}
\begin{definition}
The function given by the series in  (\ref{Harm-Cauchy-Kovalevska-extension-sum}) is called  harmonic generalized CK-extension of the pair $(A_0,A_1)$ and is  denoted by $HGCK[A_{0},A_1]$.
\end{definition}
We introduce a useful notation.
\begin{definition}\label{even-odd-parts}
Let $f\in   \mathcal{GS} (\Omega_{D}, \mathbb{A}) $.
The function $\mathcal{PE}[f]: \Omega_{D} \longrightarrow \mathbb{A}$, called partial even    part  of $f$, and the   function $\mathcal{PO}[f]: \Omega_{D} \longrightarrow \mathbb{A},$  called partial odd   part  of $f$, are defined as, respectively,
$$\mathcal{PE}[f](\bx)=\frac{1}{2}\big( f(\bx)+f(\bx_{\diamond}) \big), $$
$$\mathcal{PO}[f](\bx)=\frac{1}{2} \big( f(\bx)-f(\bx_{\diamond}) \big).$$
where $\bx=\bx_{p}+\underline{\bx}_{q}$ and $\bx_{\diamond}=\bx_{p}-\underline{\bx}_{q}$.
\end{definition}

We denote the class  of harmonic  generalized partial-slice functions on  the p-symmetric domain $\Omega$ by
\begin{eqnarray}\label{AH}
\mathcal{AH}(\Omega,\mathbb{A}):=\mathrm{Harm}(\Omega,\mathbb{A}) \cap \mathcal{GS}(\Omega,\mathbb{A}).
\end{eqnarray}
\begin{remark}\label{HGCK-two-parts}
Since $f$ in \eqref{Harm-Cauchy-Kovalevska-extension-sum} is a generalized partial-slice function, see Remark \ref{axial}.
Theorem \ref{Harm-generalized-CK-extension} gives a one-to-one correspondence between $( C^{\omega}(\Omega_0, \mathbb{A}) )^{2}$  and   $\mathcal{AH}(\Omega,\mathbb{A})$ as
 $$HGCK: (C^{\omega}(\Omega_0, \mathbb{A}) )^{2} \rightarrow  \mathcal{AH}(\Omega,\mathbb{A}),$$
 $$(A_0,A_1)\mapsto HGCK[A_0,A_1],$$
 where   $\Omega$ is a  p-symmetric slice domain in $M$ and  $\Omega_0=\Omega \cap\mathbb{R}^{p+1}$.\\
Furthermore, the harmonic generalized CK-extension can be split into two parts:
$$HGCK[A_{0},A_1]= HGCK[A_{0},0] +HGCK[0,A_1],$$
where
$$ HGCK[A_{0},0]=\mathcal{PE}\circ HGCK[A_{0},A_1],\
 HGCK[0,A_{1}]=\mathcal{PO}\circ HGCK[A_{0},A_1].$$
\end{remark}
By Theorems \ref{generalized-CK-extension} and  \ref{Harm-generalized-CK-extension}, we infer that

\begin{corollary}  \label{GCK-HGCK}
Let  $\Omega_{0}\subseteq \mathbb{R}^{p+1}$ be a domain and  consider the real analytic  functions $A_{k} \in C^{\omega}(\Omega_0, \mathbb{A})$, $k=0,1$. Then  we have
 $$GCK[A_{0}]= HGCK[A_{0},0] +\frac{1}{q}HGCK[0,D_{\bx_p}A_{0}],$$
and
$$HGCK[A_{0},A_1]= \mathcal{PE}\circ GCK[A_{0}]  + q \mathcal{PO} \circ GCK[f_0].$$
where $f_0$ is a solution of the equation $D_{\bx_p} f_0=A_1$, if it exists.
\end{corollary}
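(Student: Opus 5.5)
The plan is to compare coefficients directly, using the explicit series \eqref{Cauchy-Kovalevska-extension-sum-1-1} for $GCK[A_0]$ and \eqref{Cauchy-Kovalevska-extension-sum-1-1-harmonic} for $HGCK[A_0,A_1]$. Both are series of the form $\sum_k \underline{\bx}_q^{k}B_k(\bx_p)$, and by the uniqueness parts of Theorems \ref{generalized-CK-extension} and \ref{Harm-generalized-CK-extension} each is determined by its coefficient sequence. So every claimed identity reduces to matching the even and odd coefficients.

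For the first identity, the even part of $GCK[A_0]$ has coefficients
$$\frac{\Gamma(\frac q2)}{2^{2k}k!\Gamma(k+\frac q2)}\Delta_{\bx_p}^kA_0,$$
which are exactly the coefficients of $HGCK[A_0,0]$. The odd coefficient of $GCK[A_0]$ at $\underline{\bx}_q^{2k+1}$ is
$$\frac{\Gamma(\frac q2)}{2^{2k+1}k!\Gamma(k+\frac q2+1)}\Delta_{\bx_p}^kD_{\bx_p}A_0.$$
The odd coefficient of $HGCK[0,D_{\bx_p}A_0]$ is
$$\frac{\Gamma(\frac q2+1)}{2^{2k}k!\Gamma(k+\frac q2+1)}\Delta_{\bx_p}^kD_{\bx_p}A_0.$$
Since $\Gamma(\frac q2+1)=\frac q2\Gamma(\frac q2)$, the latter equals $q$ times the former, which gives the factor $\frac1q$.

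An alternative route is to note that $GCK[A_0]$ is monogenic, hence harmonic by Lemma \ref{d-dbar-laplace}. By uniqueness in Theorem \ref{Harm-generalized-CK-extension}, it then equals $HGCK[A_0,-\frac1q D_{\underline{\bx}_q}GCK[A_0]|_{\underline{\bx}_q=0}]$. From \eqref{inter-relation-f} with $k=0$ we have $A_1^{GCK}=\frac1q D_{\bx_p}A_0$, and splitting via Remark \ref{HGCK-two-parts} yields the same result. I would present this version because it avoids Gamma bookkeeping, and keep the coefficient check as confirmation.

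For the second identity, Remark \ref{HGCK-two-parts} gives $HGCK[A_0,A_1]=HGCK[A_0,0]+HGCK[0,A_1]$. The first statement shows that $\mathcal{PE}\circ GCK[A_0]=HGCK[A_0,0]$. This uses that $\mathcal{PE}$ picks out precisely the even powers, since $\underline{\bx}_q^{2k}$ is invariant under $\underline{\bx}_q\mapsto-\underline{\bx}_q$ and the odd powers change sign. Likewise, for $f_0$ with $D_{\bx_p}f_0=A_1$, the first statement applied to $f_0$ gives $\mathcal{PO}\circ GCK[f_0]=\frac1q HGCK[0,D_{\bx_p}f_0]=\frac1q HGCK[0,A_1]$. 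Multiplying by $q$ and adding finishes the proof.

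The main subtle point is justifying that $\mathcal{PE}$ and $\mathcal{PO}$ act termwise on the convergent series; normal convergence on the p-symmetric domain makes this immediate. The remaining care is only to ensure the domains of convergence match, for which I would intersect the domains $\Omega$ produced in the two theorems. Non-associativity causes no issue here, because all coefficients are real multiples of $\Delta_{\bx_p}^k$ applied to $\mathbb{A}$-valued functions, and $\underline{\bx}_q^{2k}$ is real.
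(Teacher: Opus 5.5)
Your proposal is correct and is essentially the paper's argument. The paper derives the corollary in one line from Theorems \ref{generalized-CK-extension} and \ref{Harm-generalized-CK-extension}, which amounts to comparing the explicit expansions \eqref{Cauchy-Kovalevska-extension-sum-1-1} and \eqref{Cauchy-Kovalevska-extension-sum-1-1-harmonic} (with $\Gamma(\frac q2+1)=\frac q2\Gamma(\frac q2)$ giving the factor $\frac1q$), then reading off $\mathcal{PE}$ and $\mathcal{PO}$ as the even and odd powers of $\underline{\bx}_q$ and using the splitting in Remark \ref{HGCK-two-parts}; your uniqueness-based variant, which uses the first two coefficients $A_0$ and $\frac1q D_{\bx_p}A_0$ from \eqref{inter-relation-f}, packages the same computation equally validly.
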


 \begin{proposition}  \label{D-GCK-HGCK}
 	Let  $\Omega_{0}\subseteq \mathbb{R}^{p+1}$ be a domain and  consider the real analytic  function  $f_{0} \in C^{\omega}(\Omega_0, \mathbb{A})$. Then  we have
 	$$\overline{D}_{\bx} \circ HGCK[f_0,0]= GCK \circ \overline{D}_{\bx_p}[f_0], $$
 and
 	$$\overline{D}_{\bx}\circ HGCK[0,f_0]=qGCK[f_0]. $$
 \end{proposition}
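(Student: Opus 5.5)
Both identities will be obtained by applying $\overline{D}_{\bx}$ term by term to the explicit series \eqref{Cauchy-Kovalevska-extension-sum-1-1-harmonic} for $HGCK$. We then use Lemma \ref{CK-formular-Four-D}, specifically \eqref{CK-formular-1-D-bar} and \eqref{CK-formular-2-D-bar}, and compare the resulting coefficients with \eqref{Cauchy-Kovalevska-extension-sum-1-1}. The term-by-term differentiation is legitimate because the series converge normally on the p-symmetric slice domain $\Omega$ constructed in Theorem \ref{generalized-CK-extension}, and power series in $r$ with real analytic coefficients may be differentiated termwise there.

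For the first identity, write
$$HGCK[f_0,0]=\sum_{k\ge0}\underline{\bx}_q^{2k}a_k\,\Delta_{\bx_p}^k f_0,\qquad a_k=\frac{\Gamma(\frac q2)}{2^{2k}k!\,\Gamma(k+\frac q2)}.$$
By \eqref{CK-formular-1-D-bar},
$$\overline{D}_{\bx}\big(\underline{\bx}_q^{2k}a_k\Delta^k_{\bx_p}f_0\big)=\underline{\bx}_q^{2k}a_k\,\overline{D}_{\bx_p}\Delta^k_{\bx_p}f_0+2k\,a_k\,\underline{\bx}_q^{2k-1}\Delta^k_{\bx_p}f_0 .$$
The operator $\Delta_{\bx_p}$ has real coefficients, so it commutes with $\overline{D}_{\bx_p}$ and with $D_{\bx_p}$. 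The even part of the result is therefore $\sum_k\underline{\bx}_q^{2k}a_k\Delta^k_{\bx_p}(\overline{D}_{\bx_p}f_0)$, which is exactly the even part of $GCK[\overline{D}_{\bx_p}f_0]$. For the odd part, reindex with $k\to k+1$ to obtain
$$\sum_k\underline{\bx}_q^{2k+1}\,2(k+1)a_{k+1}\,\Delta^{k}_{\bx_p}\Delta_{\bx_p}f_0 .$$
By Lemma \ref{d-dbar-laplace}, $\Delta_{\bx_p}f_0=D_{\bx_p}\overline{D}_{\bx_p}f_0$, so this term has the same form as the odd part of $GCK[\overline{D}_{\bx_p}f_0]$. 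It remains to check the scalar identity
$$2(k+1)a_{k+1}=\frac{\Gamma(\frac q2)}{2^{2k+1}k!\,\Gamma(k+\frac q2+1)},$$
which follows directly from $\Gamma(k+1+\frac q2)$ in the denominator of $a_{k+1}$.

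For the second identity, write
$$HGCK[0,f_0]=\sum_k\underline{\bx}_q^{2k+1}b_k\,\Delta^k_{\bx_p}f_0,\qquad b_k=\frac{\Gamma(\frac q2+1)}{2^{2k}k!\,\Gamma(k+\frac q2+1)}.$$
By \eqref{CK-formular-2-D-bar},
$$\overline{D}_{\bx}\big(\underline{\bx}_q^{2k+1}b_k\Delta^k_{\bx_p}f_0\big)=\underline{\bx}_q^{2k+1}b_k\,\Delta^k_{\bx_p}D_{\bx_p}f_0+(2k+q)\,b_k\,\underline{\bx}_q^{2k}\Delta^k_{\bx_p}f_0 .$$
We compare with $q\,GCK[f_0]$ coefficient by coefficient. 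For the even terms, $(2k+q)b_k=q\,a_k$, using $\Gamma(k+\frac q2+1)=(k+\frac q2)\Gamma(k+\frac q2)$ and $\Gamma(\frac q2+1)=\frac q2\Gamma(\frac q2)$. For the odd terms, $b_k$ must equal $q$ times the odd coefficient $\frac{\Gamma(q/2)}{2^{2k+1}k!\Gamma(k+q/2+1)}$ of \eqref{Cauchy-Kovalevska-extension-sum-1-1}, and this holds again because $\Gamma(\frac q2+1)=\frac q2\Gamma(\frac q2)$.

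The algebra is routine, so the only delicate point is the use of non-associativity. Lemma \ref{CK-formular-Four-D} is stated for arbitrary $\mathbb{A}$-valued $g(\bx_p)$, and it is applied here with $g=\Delta^k_{\bx_p}f_0$ or $g=\Delta^k_{\bx_p}D_{\bx_p}f_0$. Each product has the form $\underline{\bx}_q^{j}g$ with a single left multiplication, so no reassociation is needed beyond what the lemma already covers. The main obstacle I expect is checking that the Gamma-factor bookkeeping and the index shift match exactly; I will verify these by direct computation with $\Gamma(s+1)=s\Gamma(s)$.
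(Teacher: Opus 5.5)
Your proposal is correct and follows essentially the same route as the paper: apply $\overline{D}_{\bx}$ termwise to the series \eqref{Cauchy-Kovalevska-extension-sum-1-1-harmonic}, use \eqref{CK-formular-1-D-bar} and \eqref{CK-formular-2-D-bar} together with Lemma \ref{d-dbar-laplace}, reindex the odd terms, and match coefficients with \eqref{Cauchy-Kovalevska-extension-sum-1-1} using $\Gamma(s+1)=s\Gamma(s)$. Your Gamma checks are exactly the identities the paper's chain of equalities relies on: $2(k+1)a_{k+1}$ equals the odd $GCK$ coefficient, $(2k+q)b_k=q\,a_k$, and $b_k$ is $q$ times the odd $GCK$ coefficient.
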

Note that  the  operators $\overline{D}_{\bx_p} $ and $GCK$ in general do not commute, unless $p=0$.
\begin{proof}
Let $\Omega_{0}\subseteq \mathbb{R}^{p+1}$ be a domain and $f_{0} \in C^{\omega}(\Omega_0, \mathbb{A})$.
Recalling (\ref{Cauchy-Kovalevska-extension-sum-1-1-harmonic}), (\ref{CK-formular-1-D-bar}) in Lemma  \ref{CK-formular-Four-D}, Lemma \ref{d-dbar-laplace}, and (\ref{Cauchy-Kovalevska-extension-sum-1-1}), we have the following chain of equalities
\begin{eqnarray*}
	&&\overline{D}_{\bx} \circ HGCK[f_0,0] (\bx)\\
	& =&  \overline{D}_{\bx} \Big( \sum_{k=0}^{+\infty} \frac{\Gamma(\frac{q}{2}) \underline{\bx}_q ^{2k}}{2^{2k}k!\Gamma(k+\frac{q}{2})}   \Delta_{\bx_p}^{k}   f_{0}(\bx_{p}) \Big)
\\
&=&    \sum_{k=0}^{+\infty} \frac{\Gamma(\frac{q}{2})}{2^{2k}k!\Gamma(k+\frac{q}{2})}   \big(\underline{\bx}_q ^{2k}(\Delta_{\bx_p}^{k}  \overline{D}_{\bx_p}  f_{0}(\bx_{p}))+2k\underline{\bx}_q ^{2k-1} \Delta_{\bx_p}^{k}  f_{0}(\bx_{p}) \big)
\\
&=&\sum_{k=0}^{+\infty} \frac{\Gamma(\frac{q}{2})\underline{\bx}_q ^{2k}}{2^{2k}k!\Gamma(k+\frac{q}{2})}   \Delta_{\bx_p}^{k}  (\overline{D}_{\bx_p}  f_{0}(\bx_{p}))+ \sum_{k=1}^{+\infty} \frac{\Gamma(\frac{q}{2})\underline{\bx}_q ^{2k-1}}{2^{2k-1}(k-1)!\Gamma(k+\frac{q}{2})}  \Delta_{\bx_p}^{k}  f_{0}(\bx_{p}) \\
&=&\sum_{k=0}^{+\infty} \frac{\Gamma(\frac{q}{2})\underline{\bx}_q ^{2k}}{2^{2k}k!\Gamma(k+\frac{q}{2})}   \Delta_{\bx_p}^{k}  (\overline{D}_{\bx_p}  f_{0}(\bx_{p}))+ \sum_{k=0}^{+\infty} \frac{\Gamma(\frac{q}{2})\underline{\bx}_q ^{2k+1}}{2^{2k+1}k!\Gamma(k+1+\frac{q}{2})}  \Delta_{\bx_p}^{k}
(D_{\bx_p}(\overline{D}_{\bx_p} f_{0}(\bx_{p}))) \\
&=& GCK [\overline{D}_{\bx_p}f_0](\bx). \end{eqnarray*}
Furthermore, from (\ref{Cauchy-Kovalevska-extension-sum-1-1-harmonic}),   (\ref{CK-formular-2-D-bar}) in Lemma  \ref{CK-formular-Four-D}, and (\ref{Cauchy-Kovalevska-extension-sum-1-1}), we have
\begin{eqnarray*}
	&&\overline{D}_{\bx} \circ HGCK[0,f_0] (\bx)\\
	& =&  \overline{D}_{\bx} \Big( \sum_{k=0}^{+\infty} \frac{\Gamma(\frac{q}{2}+1) \underline{\bx}_q ^{2k+1}}{2^{2k}k!\Gamma(k+1+\frac{q}{2})}   \Delta_{\bx_p}^{k}   f_{0}(\bx_{p}) \Big)
\\
&=&    \sum_{k=0}^{+\infty} \frac{\Gamma(\frac{q}{2}+1)}{2^{2k}k!\Gamma(k+1+\frac{q}{2})}   \big(\underline{\bx}_q ^{2k+1}\Delta_{\bx_p}^{k}  (D_{\bx_p}  f_{0}(\bx_{p}))+(2k+q)\underline{\bx}_q ^{2k} \Delta_{\bx_p}^{k}  f_{0}(\bx_{p}) \big)
\\
&=&q   \Big( \sum_{k=0}^{+\infty} \frac{\Gamma(\frac{q}{2})\underline{\bx}_q ^{2k+1}}{2^{2k+1}k!\Gamma(k+1+\frac{q}{2})}   \Delta_{\bx_p}^{k}  (D_{\bx_p}  f_{0}(\bx_{p}))+ \sum_{k=0}^{+\infty} \frac{\Gamma(\frac{q}{2})\underline{\bx}_q ^{2k}}{2^{2k}k!\Gamma(k+\frac{q}{2})}  \Delta_{\bx_p}^{k}  f_{0}(\bx_{p}) \Big)\\
&=& q GCK [ f_0]. \end{eqnarray*}
The proof is complete.
\end{proof}

\section{Fueter-Sce theorem}

In the context of generalized partial-slice monogenic functions, a Fueter-Sce theorem was proved in \cite{Xu-Sabadini-2}.
In \cite{Perotti-22}, Perotti  gave a  version of the Fueter-Sce-Qian theorem for slice regular
functions  over  real alternative $\ast$-algebras. In this section we treat the combined case in which we work with generalized partial-slice regular functions in real alternative $*$-algebras.  Note that, when $\mathbb{A}$ is not associative, all results obtained in \cite{Perotti-22} requires that the  involved functions are  $M$-admissible in the sense of
\cite[Definition 6]{Perotti-22}, while we do not make use of this additional assumption.

In this section, we shall use some symbols and terminologies introduced in the next definition. We use some notations in Section \ref{subs2.5}.
\begin{definition}\label{spherical-value}
Let $f\in   \mathcal{GS} (\Omega_{D}, \mathbb{A}) $.
The function $f^{\circ}_{s}: \Omega_{D} \longrightarrow \mathbb{A}$, called spherical value of $f$, and the   function $f^{\prime}_{s}: \Omega_{D}\setminus \mathbb{R}^{p+1} \longrightarrow \mathbb{A}$,  called spherical derivative of $f$, are defined as, respectively,
$$ f^{\circ}_{s}(\bx)=\frac{1}{2}\big( f(\bx)+f(\bx_{\diamond}) \big), $$
$$ f^{\prime}_{s}(\bx)=\frac{1}{2}\underline{\bx}_{q}^{-1}\big( f(\bx)-f(\bx_{\diamond}) \big).$$
\end{definition}

Let  $f(\bx)=F_1(\bx')+\underline{\omega} F_2(\bx') \in  \mathcal{GS} (\Omega_{D}, \mathbb{A})$. By definition, we have,   for $\bx \in \Omega_{D}$,
\begin{equation}\label{spherical-value-e-o}
\mathcal{PE}[f](\bx)=f^{\circ}_{s}(\bx)= F_1 (\bx')=f^{\circ}_{s}(\bx_{\diamond}),\ \ \ f(\bx)= \mathcal{PE}[f](\bx)+\mathcal{PO}[f](\bx),
\end{equation}
and for $\bx \in \Omega_{D}\setminus \mathbb{R}^{p+1}$
\begin{equation}\label{spherical-value-derivative}f^{\prime}_{s}(\bx)= \frac{1}{r}F_2 (\bx')= f^{\prime}_{s}(\bx_{\diamond}),\ \ \ \mathcal{PO}[f](\bx)=\underline{\bx}_{q} f_{s}^{\prime}(\bx), \ \ \
 f(\bx)=f^{\circ}_{s}(\bx)+ \underline{\bx}_{q} f_{s}^{\prime}(\bx).
 \end{equation}

The spherical Dirac operator $\Gamma$ associated to the basis $\mathcal{B}$ of $\mathbb{R}^{q}$  is defined  by
$$\Gamma f=-\frac{1}{2}\sum_{i, j=p+1}^{p+q} v_{i}(v_{j}(L_{ij}f)),\quad f\in C^{1}(\Omega,\mathbb{A}),$$
where
$ L_{ij}=x_{i}  \partial_{x_{j}}-x_{j}  \partial_{x_{i}}$  are tangential differential operators.
From the proof of Lemma \ref{a-w-b}, we have that for all $a\in \mathbb{A}$
$$v_{i}(v_{j}a)=-v_{j}(v_{i}a),  \quad 1\leq i\neq j\leq m. $$
Hence, we obtain
$$\Gamma f= -\sum_{p+1\leq i < j\leq p+q} v_{i}(v_{j} L_{ij}f).$$

\begin{lemma}  \label{Fueter-Sce-Qian-lemma-1}
Let    $f(\bx)=F_1(\bx')+\underline{\omega} F_2(\bx') \in {\mathcal{GS}}^{2}(\Omega_{D}, \mathbb{A})\cap C^{2}(\Omega_{D}, \mathbb{A})$. Then it holds that for all $\bx\in\Omega_{D}$
\begin{eqnarray}\label{relation-harm}
\Delta_{\bx} f(\bx)= \Delta_{\bx'}F_1(\bx')+ \underline{\omega} \Delta_{\bx'}F_2(\bx')+ (q-1) ( (\frac{1}{r}\partial_{r}) F_1(\bx')+\underline{\omega} (\partial_{r}\frac{1}{r}) F_2(\bx') ),
\end{eqnarray}
where  $\Delta_{\bx'}=\Delta_{\bx_p}+\partial_{r}^{2}$ is the Laplacian  in $\mathbb {R} ^{p+2}$ and the values of $(\frac{1}{r}\partial_{r}) F_1$ and $(\partial_{r}\frac{1}{r}) F_2$ at  $\bx'=(\bx_{p},0)\in  D$   are  meant as
\begin{eqnarray}\label{relation-harm-000}
\lim_{r\rightarrow0} (\frac{1}{r}\partial_{r}) F_1(\bx')=\partial_{r}^{2}F_1(\bx_{p},0),
\end{eqnarray}
and
\begin{eqnarray}\label{relation-harm-0001}
\lim_{r\rightarrow0}(\partial_{r}\frac{1}{r}) F_2(\bx')=\frac{1}{2}\partial_{r}^{2}F_2(\bx_{p},0),
\end{eqnarray}
respectively.
\end{lemma}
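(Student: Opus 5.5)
Our plan is to compute $\Delta_{\bx} f$ directly in the variables $(\bx_p,\underline{\bx}_q)$. We split $\Delta_{\bx}=\Delta_{\bx_p}+\Delta_{\underline{\bx}_q}$, where $\Delta_{\underline{\bx}_q}=\sum_{i=p+1}^{p+q}\partial_{x_i}^2$, and work first on $\Omega_D\setminus\mathbb{R}^{p+1}$. There we write
\[
f(\bx)=F_1(\bx_p,r)+\underline{\bx}_q\,\Big(\tfrac{1}{r}F_2(\bx_p,r)\Big), \qquad r=|\underline{\bx}_q|,
\]
using $\underline{\omega}=\underline{\bx}_q/r$ and the fact that $1/r$ is real, so no associativity is needed. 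The operator $\Delta_{\bx_p}$ has real coefficients and does not touch $r$ or $\underline{\bx}_q$. Hence it acts termwise: $\Delta_{\bx_p}f=\Delta_{\bx_p}F_1+\underline{\omega}\,\Delta_{\bx_p}F_2$.

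For the radial part we use the classical formula for radial real-valued functions on $\mathbb{R}^q$. For $G(\bx_p,r)$ (with $\mathbb{A}$-valued components, differentiated componentwise) we have
\[
\Delta_{\underline{\bx}_q}G=\partial_r^2G+\tfrac{q-1}{r}\partial_rG .
\]
This gives the $F_1$ contribution $\partial_r^2F_1+(q-1)\tfrac1r\partial_rF_1$. For the second term we set $h=\tfrac1rF_2$, which is radial. We then apply the Leibniz rule stated in Section 2 for $\partial_{x_i}^2$ to the product $\underline{\bx}_q\,h$. Since $\partial_{x_i}^2\underline{\bx}_q=0$ and $\partial_{x_i}\underline{\bx}_q=v_i$, we get
\[
\Delta_{\underline{\bx}_q}(\underline{\bx}_q h)=\underline{\bx}_q\,\Delta_{\underline{\bx}_q}h+2\sum_{i}v_i\,\partial_{x_i}h .
\]
Moreover $\partial_{x_i}h=\tfrac{x_i}{r}\partial_rh$, with $\partial_r h$ taking values in $\mathbb{A}$ and $x_i/r$ real, so $\sum_i v_i\partial_{x_i}h=\underline{\omega}\,\partial_r h$. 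Collecting terms gives
\[
\underline{\omega}\Big(r\partial_r^2h+(q-1)\partial_rh+2\partial_rh\Big).
\]
A one-variable computation then turns $r\partial_r^2h+(q+1)\partial_rh$ into $\partial_r^2F_2+(q-1)\,\partial_r\big(\tfrac1rF_2\big)$. Adding the $\Delta_{\bx_p}$ part yields (\ref{relation-harm}) on $\Omega_D\setminus\mathbb{R}^{p+1}$. No non-associativity issue arises, because every product involved is of a real scalar with an element of $\mathbb{A}$, or of $\underline{\omega}$ with a single $\mathbb{A}$-valued function.

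At points with $r=0$ we argue by continuity. Since $f\in C^2(\Omega_D,\mathbb{A})$, the left-hand side is continuous, so it suffices to identify the limits of the singular terms. By the even-odd conditions, $F_1$ is even in $r$, so $\partial_rF_1(\bx_p,0)=0$. Hence $\tfrac1r\partial_rF_1\to\partial_r^2F_1(\bx_p,0)$ by l'Hôpital, which is (\ref{relation-harm-000}). Similarly $F_2$ is odd, so $F_2(\bx_p,0)=0$ and $\partial_r^2F_2(\bx_p,0)=0$. Taylor expansion then gives $\partial_r(\tfrac1rF_2)=\tfrac{r\partial_rF_2-F_2}{r^2}\to\tfrac12\partial_r^2F_2(\bx_p,0)$, which is (\ref{relation-harm-0001}).

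The main obstacle is regularity at $r=0$: we only know $F\in C^2$, so these Taylor arguments need care. We would use a second-order Taylor expansion with remainder $o(r^2)$, which $C^2$ regularity supplies. Together with the parity of $F_1$ and $F_2$, this is enough to justify the stated limits.
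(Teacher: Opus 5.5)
Your proposal is correct and follows essentially the same route as the paper. You split $\Delta_{\bx}=\Delta_{\bx_p}+\Delta_{\underline{\bx}_q}$ and use the radial form of $\Delta_{\underline{\bx}_q}$ for $F_1$. You apply the Leibniz rule to $\underline{\bx}_q f^{\prime}_{s}$ with $f^{\prime}_{s}=F_2/r$, finish with the one-variable identity, and extend to $r=0$ by continuity. Your explicit parity and Taylor argument for the two limits at $r=0$ fills in what the paper only calls ``standard arguments.''
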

\begin{proof}
Let    $f(\bx)=F_1(\bx')+\underline{\omega} F_2(\bx') \in {\mathcal{GS}}^{2}(\Omega_{D}, \mathbb{A})\cap C^{2}(\Omega_{D}, \mathbb{A})$.
First, it should be  pointed  out that   the existence of limits in  (\ref{relation-harm-000}) and (\ref{relation-harm-0001}) depends on   the  even-oddness of $(F_1,F_2)$    in the $(p+2)$-th variable, which   can be justified  with the standard arguments. Then we prove (\ref{relation-harm}) for  $\bx\in \Omega_{D}\setminus \mathbb{R}^{p+1}$ so that the case  $\bx=(\bx_{p},0)\in\Omega_{D}$  follows immediately from  the  continuity.

In fact, (\ref{relation-harm}) can be obtained   by showing that
\begin{eqnarray}\label{relation-harm-1}
\Delta_{\bx} f^{\circ}_{s}(\bx)= \Delta_{\bx'}F_1(\bx')+  (q-1) (\frac{1}{r}\partial_{r}) F_1(\bx'),
\end{eqnarray}
and
\begin{eqnarray}\label{relation-harm-2}
\Delta_{\bx} (\underline{\bx}_{q}f^{\prime}_{s}(\bx))=  \underline{\omega} ( \Delta_{\bx'}F_2(\bx')+ (q-1) (\partial_{r}\frac{1}{r}) F_2(\bx') ).
\end{eqnarray}
To this end, consider the decomposition
$$\Delta_{\bx}=\Delta_{\bx_p}+\Delta_{\underline{\bx}_q}, \quad \Delta_{\bx_p}=\sum_{i=0}^{p} \partial_{x_i}^{2}, \quad \Delta_{\underline{\bx}_q}=\sum_{i=p+1}^{p+q} \partial_{x_i}^{2},$$
where $\Delta_{\underline{\bx}_q}$  can be represented in  spherical coordinates as
$$\Delta_{\underline{\bx}_q}=\partial_{r}^{2}+\frac{q-1}{r}\partial_{r}+\frac{1}{r^{2}} \Delta_{\underline{\omega}},$$
and $\Delta_{\underline{\omega}}$ is    the Laplace-Beltrami operator on  $\mathbb{S}$, see e.g. \cite[Lemma 1.4.1]{Dai-Xu} for more details.
Note that     $\Delta_{\underline{\omega}}  f^{\circ}_{s}(\bx)=0$, and so (\ref{relation-harm-1}) holds immediately. To prove (\ref{relation-harm-2}), we
observe  that $$\Delta_{\bx} \underline{\bx}_{q}=0,\ \Delta_{\underline{\omega}} f^{\prime}_{s}(\bx)=0,  $$  we have
 \begin{eqnarray*}
\Delta_{\bx} (\underline{\bx}_{q}f^{\prime}_{s}(\bx))
&=&2\sum_{i=0}^{p+q} ( \partial_{x_i} \underline{\bx}_{q} ) ( \partial_{x_i}f^{\prime}_{s}(\bx))+ \underline{\bx}_{q} (\Delta_{\bx} f^{\prime}_{s}(\bx)) \\
&=&2\sum_{i=p+1}^{p+q} v_i  \Big( \frac{x_i}{r} \partial_r \frac{ F_2(\bx')}{r}\Big)+ \underline{\bx}_{q} (\Delta_{\bx} f^{\prime}_{s}(\bx)) \\
&=& 2\sum_{i=p+1}^{p+q} (v_ix_i)   \frac{1} {r} \partial_r \frac{ F_2(\bx')}{r} + \underline{\bx}_{q} (\Delta_{\bx} f^{\prime}_{s}(\bx))
 \\
&=&2 \frac{\underline{\bx}_q}{r}   \partial_r \frac{ F_2(\bx')}{r}+ \underline{\bx}_{q} \Big((\Delta_{\bx_p} +\partial_{r}^{2} +\frac{q-1}{r}\partial_{r}) \frac{ F_2(\bx')}{r}\Big)\\
&=&\underline{\omega} ( \Delta_{\bx'}F_2(\bx')+ (q-1) (\partial_{r}\frac{1}{r}) F_2(\bx') ),
\end{eqnarray*}
where the last equality follows from the formula
$$ \partial_{r}^{2} \frac{ F_2(\bx')}{r}=-\frac{2}{r} \partial_{r}  \frac{ F_2(\bx')}{r}+\frac{1}{r} \partial_{r}^{2}F_2(\bx'),$$
and the proof is complete.
\end{proof}

\begin{proposition}  \label{GSR-Harmonic}
  Let $f(\bx)=F_1(\bx')+\underline{\omega} F_2(\bx') \in  \mathcal{GSR}  (\Omega_{D},\mathbb{ A})$. Then
  $F_1, F_2$  are   real analytic on $D, f$ is real analytic on $\Omega_{D}$, and
    \[\Delta_{\bx'}F_1 =0,\quad \Delta_{\bx'}F_2 =0,\]
  where $\Delta_{\bx'}$ is the Laplacian  in $\mathbb {R} ^{p+2}$.
\end{proposition}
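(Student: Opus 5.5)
The plan is to show first that $F_1,F_2$ are harmonic in $D$ by differentiating the generalized Cauchy--Riemann system (\ref{C-R}). Real analyticity of $F_1,F_2$ then follows from harmonicity, and real analyticity of $f$ comes from expressing $f$ through the generalized CK-extension.

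\textbf{Step 1: harmonicity.} Assume first that $F\in C^2$. Apply $\overline{D}_{\bx_p}$ to the first equation of (\ref{C-R}) and $\partial_r$ to the second. By Lemma \ref{d-dbar-laplace} in $\mathbb{R}^{p+1}$, $\overline{D}_{\bx_p}(D_{\bx_p}F_1)=\Delta_{\bx_p}F_1$; this holds without associativity. Since $\partial_r$ is real and commutes with $\overline{D}_{\bx_p}$, we get
$$\Delta_{\bx_p}F_1=\partial_r\overline{D}_{\bx_p}F_2=-\partial_r^2F_1,$$
that is, $\Delta_{\bx'}F_1=0$. Symmetrically, applying $D_{\bx_p}$ to the second equation and $\partial_r$ to the first gives $D_{\bx_p}(\overline{D}_{\bx_p}F_2)=-\partial_rD_{\bx_p}F_1=-\partial_r^2F_2$, hence $\Delta_{\bx'}F_2=0$.

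\textbf{Step 2: removing the $C^2$ assumption.} I expect this to be the main technical point, since $F$ is only assumed $C^1$. Combine the two real equations into one Cauchy--Riemann type equation on $D\subseteq\mathbb{R}^{p+2}$. Pick any $\underline{\omega}\in\mathbb{S}$ and set $g=F_1+\underline{\omega}F_2$ on $D$, using all of $D$ rather than only $r\ge0$. Lemma \ref{a-w-b} gives $D_{\bx_p}(\underline{\omega}F_2)=\underline{\omega}(\overline{D}_{\bx_p}F_2)$, and Artin's theorem gives $\underline{\omega}(\underline{\omega}\partial_rF_2)=-\partial_rF_2$. Hence
$$(D_{\bx_p}+\underline{\omega}\partial_r)g=\big(D_{\bx_p}F_1-\partial_rF_2\big)+\underline{\omega}\big(\overline{D}_{\bx_p}F_2+\partial_rF_1\big)=0.$$
So $g$ is a $C^1$ null solution of $D_{\underline{\omega}}$ on the open set $D$. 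The Cauchy formula (Theorem \ref{Cauchy-slice}), applied on small balls, represents $g$ by integrals against a real analytic kernel; this shows $g\in C^\infty$ and in fact real analytic, as in Remark \ref{C1-C00}.

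To recover $F_1$ and $F_2$, use the reflection. Because $F_1$ is even and $F_2$ is odd in $r$, we have $g(\bx'_\diamond)=F_1-\underline{\omega}F_2$. Therefore
$$F_1=\tfrac12\big(g+g\circ\diamond\big),\qquad \underline{\omega}F_2=\tfrac12\big(g-g\circ\diamond\big),$$
and by Artin's theorem $F_2=-\underline{\omega}(\underline{\omega}F_2)$. Thus $F_1,F_2$ are real analytic, in particular $C^2$, and Step 1 applies. Alternatively, harmonic functions are real analytic (Weyl's lemma), so once $C^2$ is known this also follows from Step 1.

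\textbf{Step 3: analyticity of $f$ on $\Omega_D$.} Real analyticity of $f(\bx)=F_1(\bx_p,|\underline{\bx}_q|)+\underline{\omega}F_2(\bx_p,|\underline{\bx}_q|)$ on $\Omega_D$ needs care, because $|\underline{\bx}_q|$ is not smooth at $\underline{\bx}_q=0$.

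\emph{Away from $\mathbb{R}^{p+1}$:} for $r>0$, write $f=F_1(\bx_p,r)+\underline{\bx}_q\,\frac{F_2(\bx_p,r)}{r}$. Here $r$ and $1/r$ are real analytic, so $f$ is real analytic there.

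\emph{Near $r=0$:} $F_1$ is even and real analytic in $r$, so it is a real analytic function of $r^2=|\underline{\bx}_q|^2$. Likewise $F_2/r$ is even and real analytic in $r$, so it is analytic in $r^2$. Concretely, expand
$$F_1=\sum_k r^{2k}a_k(\bx_p),\qquad F_2=\sum_k r^{2k+1}b_k(\bx_p)$$
locally. Then $f=\sum_k(-1)^k\big(\underline{\bx}_q^{2k}a_k+\underline{\bx}_q^{2k+1}b_k\big)$, which is a convergent power series in $(\bx_p,\underline{\bx}_q)$. This mirrors the series structure used in Theorem \ref{generalized-CK-extension}, and it gives real analyticity of $f$ near $\mathbb{R}^{p+1}$.
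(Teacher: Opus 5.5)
Your proposal is correct and follows essentially the same route as the paper: harmonicity of $F_1,F_2$ comes from the system (\ref{C-R}) and Lemma \ref{d-dbar-laplace} once $C^2$ regularity is secured via the Cauchy formula, and analyticity of $f$ comes from writing $F_1=G_1(\bx_p,r^2)$, $F_2=rG_2(\bx_p,r^2)$, so that $f=G_1+\underline{\bx}_qG_2$. The only differences are small ones: you verify $D_{\underline{\omega}}(F_1+\underline{\omega}F_2)=0$ on $D$ and split into even and odd parts directly, where the paper passes through Theorem \ref{relation-GSR-GSM} and Proposition \ref{GSM-Harm}; and you obtain the $r^2$-factorization by an elementary Taylor-series argument, where the paper cites Whitney's theorems.
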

\begin{proof}
Let $f(\bx)=F_1(\bx')+\underline{\omega} F_2(\bx') \in {\mathcal{GSR}}(\Omega_{D},\mathbb{ A})$.
From   Theorem  \ref{relation-GSR-GSM}, it follows that  $f \in {\mathcal{GSM}}(\Omega_{D},\mathbb{ A})$. Hence, by Proposition \ref{GSM-Harm},  we know that $F_1, F_2\in C^{2}(D, \mathbb{A})$.   Then, by (\ref{C-R}) and Lemma \ref{d-dbar-laplace}, we have
$$\Delta_{\bx_{p}}  F_1(\bx')
 =\overline{D}_{\bx_p} (D_{\bx_p} F_1(\bx'))=\overline{D}_{\bx_p}   (\partial_{r} F_2 (\bx'))=\partial_{r}\overline{D}_{\bx_p}   F_2(\bx') =-\partial_{r}^{2} F_1(\bx'),$$
 which gives $\Delta_{\bx'}F_1 =0$.  Now  the harmonicity of $F_1$ guarantees that it is also  real analytic  on $D$.
 Similarly, we can prove $\Delta_{\bx'}F_2 =0$ as
$$\Delta_{\bx_{p}}  F_2(\bx')
 =D_{\bx_p} (\overline{D}_{\bx_p}  F_2(\bx'))=-D_{\bx_p}  (  \partial_{r} F_1(\bx')) =-\partial_{r}D_{\bx_p}    F_1 (\bx')=-\partial_{r}^{2} F_2(\bx').$$

Now it remains to prove that $f$   is real analytic on $\Omega_{D}$. To this end, recalling that $(F_1,F_2)$  is an even-odd pair in the $(p+2)$-th variable, from \cite[Theorem 1]{Whitney} and \cite[Theorem 2]{Whitney}, there exist real analytic functions $G_{1}$ and $G_{2}$  such that
$$F_1(\bx_{p}, r)=G_{1}(\bx_{p}, r^{2}), \ F_2(\bx_{p}, r)=rG_{2}(\bx_{p}, r^{2}), \quad (\bx_{p}, r)\in D.$$
Then the function
$$f(\bx)=f(\bx_p+\underline{\bx}_q)=G_{1}(\bx_{p}, r^{2})+ \underline{\bx}_q G_{2}(\bx_{p}, r^{2}), \quad r=|\underline{\bx}_q|,$$
is real analytic on $\Omega_{D}$,  which completes the proof.
\end{proof}

  \begin{remark}\label{0-lim}
 Let $f(\bx)=F_1(\bx')+\underline{\omega} F_2(\bx') \in {\mathcal{GSR}}(\Omega_{D})$. Then,  for all $k \in \mathbb{N}$,  Proposition \ref{GSR-Harmonic}  guarantees the   functions
 \begin{equation}\label{r}
(\frac{1}{r}\partial_{r})^{k} F_1(\bx'), \ (\partial_{r}\frac{1}{r})^{k}F_2(\bx')
\end{equation}
 are well-defined for  $\bx'=(\bx_{p},r) \in  D$ with $r\neq0$.
  Reasoning as in the proof of Lemma \ref{Fueter-Sce-Qian-lemma-1}, one can deduce that the following  two limits exist
   \begin{equation}\label{r-lim}
\lim_{r\rightarrow0} (\frac{1}{r}\partial_{r})^{k} F_1(\bx'), \qquad \lim_{r\rightarrow0}(\partial_{r}\frac{1}{r})^{k}F_2(\bx').
\end{equation}
{\bf Notation.} For the sake of simplicity, in Lemma  \ref{lemma-inter-relation}, Lemma \ref{Fueter-Sce-Qian-lemma-k}, and  Theorem \ref{Fueter-theorem} below,  we will use the convention that the values of  functions in (\ref{r}) at $(\bx_{p},0)\in  D$  are  meant as limits in (\ref{r-lim}), respectively.
  \end{remark}

\begin{lemma} \label{lemma-inter-relation}
Let $f(\bx)=F_1(\bx')+\underline{\omega} F_2(\bx') \in  \mathcal{GSR} (\Omega_{D}, \mathbb{A})$ and denote for $k \in \mathbb{N}$
\begin{equation}\label{A-k-B-k}
 A_{k}(\bx')=   (\frac{1}{r}\partial_{r})^{k}F_1(\bx'), \quad
 B_{k}(\bx')=    (\partial_{r}\frac{1}{r})^{k}F_2(\bx').
\end{equation}
  Then it holds that, for all $k \in \mathbb{N}$ and $\bx\in \Omega_{D}$,
  \begin{eqnarray}\label{inter-relation}
 \left\{
\begin{array}{ll}
D_{\bx_p}  A_{k}(\bx')- \partial_{r} B_{k}(\bx')=\frac{2k}{r}B_{k}(\bx'),
\\
 \overline{D}_{\bx_p} B_{k}(\bx') + \partial_{r} A_{k}(\bx')=0.
\end{array}
\right.
\end{eqnarray}
 \end{lemma}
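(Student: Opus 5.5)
Induction on $k$, with the base case $k=0$ given by the generalized Cauchy--Riemann system (\ref{C-R}): $A_0=F_1$, $B_0=F_2$, so the first equation reads $D_{\bx_p}F_1-\partial_r F_2=0$ and the second $\overline{D}_{\bx_p}F_2+\partial_rF_1=0$. The identities are first proved for $r\neq0$, where every operation is a genuine derivative, and then extended to $r=0$ by the limit convention of Remark \ref{0-lim}, by continuity. Since $\partial_r$, $\frac1r$ and the real-coefficient operators commute with $D_{\bx_p}$, $\overline{D}_{\bx_p}$ (which act only on $\bx_p$, multiplying by the $v_i$ on the left), no associativity issue arises: every step only pulls real scalar functions of $r$ through $v_i(\cdot)$. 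The non-associativity of $\mathbb A$ therefore plays no role here.

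For the inductive step, write $A_{k+1}=\frac1r\partial_rA_k$ and $B_{k+1}=\partial_r(\frac1rB_k)=\frac1r\partial_rB_k-\frac1{r^2}B_k$. The second equation follows directly: apply $\frac1r\partial_r$ to $\overline{D}_{\bx_p}B_k+\partial_rA_k=0$. This gives $\overline{D}_{\bx_p}(\frac1r\partial_rB_k)+\partial_rA_{k+1}+\frac1{r^2}\partial_rA_k=0$. Rewrite $\frac1r\partial_rB_k=B_{k+1}+\frac1{r^2}B_k$, so the leftover term is $\frac1{r^2}(\overline{D}_{\bx_p}B_k+\partial_rA_k)$, which vanishes by the induction hypothesis. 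Hence $\overline{D}_{\bx_p}B_{k+1}+\partial_rA_{k+1}=0$.

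For the first equation, apply $\frac1r\partial_r$ to $D_{\bx_p}A_k=\partial_rB_k+\frac{2k}rB_k$. The left side becomes $D_{\bx_p}A_{k+1}$. The right side is $\frac1r\partial_r^2B_k+\frac{2k}{r^2}\partial_rB_k-\frac{2k}{r^3}B_k$. Now express $\partial_rB_{k+1}+\frac{2k+2}rB_{k+1}$ in terms of $B_k$. From $B_{k+1}=\frac1r\partial_rB_k-\frac1{r^2}B_k$ we get $\partial_rB_{k+1}=\frac1r\partial_r^2B_k-\frac2{r^2}\partial_rB_k+\frac2{r^3}B_k$, and $\frac{2k+2}rB_{k+1}=\frac{2k+2}{r^2}\partial_rB_k-\frac{2k+2}{r^3}B_k$. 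Summing gives exactly $\frac1r\partial_r^2B_k+\frac{2k}{r^2}\partial_rB_k-\frac{2k}{r^3}B_k$, which closes the induction. Notably, the first equation needs only its own hypothesis at level $k$, not the second equation.

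The only delicate point is the passage to $r=0$. Here the left-hand sides extend continuously by Remark \ref{0-lim}, using real analyticity and the even--odd structure from Proposition \ref{GSR-Harmonic}: $A_k$ is even in $r$ and $B_k$ is odd. The term $\frac{2k}rB_k$ also has a finite limit, since $B_k$ is odd and smooth, so $B_k/r\to\partial_rB_k(\bx_p,0)$. Thus (\ref{inter-relation}) holds on all of $D$, interpreted via those limits, and hence for all $\bx\in\Omega_D$. I expect this bookkeeping, rather than the algebra, to be the main thing to state carefully.
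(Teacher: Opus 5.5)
Your proof is correct and follows the paper's route: induction on $k$ starting from the generalized Cauchy--Riemann system, commuting $D_{\bx_p}$ and $\overline{D}_{\bx_p}$ with the real radial operators, and your computation for the first equation matches the paper's. The differences are cosmetic: the paper gets the second equation in one line as $\overline{D}_{\bx_p}B_{k+1}=\partial_r\big(\frac1r\overline{D}_{\bx_p}B_k\big)=-\partial_r\big(\frac1r\partial_rA_k\big)=-\partial_rA_{k+1}$, which avoids your leftover-term cancellation, and it leaves the extension to $r=0$ implicit in the convention of Remark \ref{0-lim}, whereas you make that extension explicit.
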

\begin{proof}For $k=0$, the system (\ref{inter-relation}) coincides with the generalized Cauchy-Riemann equations in (\ref{C-R}); see Definition \ref{definition-GSR}. The step from $k-1$ to $k$ for (\ref{inter-relation}) can be obtained as follows:
 \begin{eqnarray*}
 D_{\bx_p}  A_{k}(\bx')
&=&\frac{1}{r} \partial_{r} D_{\bx_p} A_{k-1}(\bx')\\
&=& \frac{1}{r} \partial_{r}  (\partial_{r} B_{k-1}(\bx')+\frac{2k-2}{r}B_{k-1}(\bx'))
 \\
 &=& \frac{1}{r}   \partial_{r}^{2} B_{k-1}(\bx')+\frac{2k-2}{r}B_{k}(\bx')
 \\
&=&\partial_{r} B_{k}(\bx')+\frac{2k}{r}B_{k}(\bx'),
\end{eqnarray*}
and
 \begin{eqnarray*}
 \overline{D}_{\bx_p} B_{k}(\bx')
&=&\overline{D}_{\bx_p} ( \partial_{r} \frac{B_{k-1}(\bx')}{r}) \\
&=& \partial_{r} \frac{\overline{D}_{\bx_p} B_{k-1}(\bx')}{r}
 \\
&=&\partial_{r} \frac{- \partial_{r} A_{k-1}(\bx') }{r}\\
&=&- \partial_{r} A_{k}(\bx'),
\end{eqnarray*}
as asserted.

 \end{proof}

Now we need to establish  the following results  for generalized partial-slice functions, which generalize \cite[Proposition 5.3]{Xu-Sabadini}
from Clifford algebras to  general real alternative $\ast$-algebras.
We refer to the works \cite[Proposition 3.3.2]{Perotti-19} in the framework of  Clifford algebras  and   \cite[Theorem 8 and Proposition 9]{Perotti-22} for $M$-admissible functions over   real alternative $\ast$-algebras.  It should be pointed out, one more time, that for our results we do not impose the additional $M$-admissible condition.
\begin{proposition}\label{relation-Dirac-and-Dirac-slice}
Let $f \in {\mathcal{GS}}^{1}(\Omega_{D}, \mathbb{A}) \cap C^{1}(\Omega_{D}, \mathbb{A}) $. Then the following two formulas hold on $\Omega_{D}\setminus \mathbb{R}^{p+1}$:\\
(i) $\Gamma f(\bx)=(q-1)   \underline{\bx}_{q}f^{\prime}_{s}(\bx)$;\\
(ii) $(D_{\bx}-{D_{\underline{\omega}})}f(\bx) =(1-q)f^{\prime}_{s}(\bx)$.
\end{proposition}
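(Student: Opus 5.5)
We use the decomposition \eqref{spherical-value-derivative}: on $\Omega_D\setminus\mathbb{R}^{p+1}$,
$$f(\bx)=f^{\circ}_{s}(\bx)+\underline{\bx}_q f^{\prime}_{s}(\bx),\qquad f^{\circ}_{s}=F_1(\bx_p,r),\quad f'_s=\tfrac{1}{r}F_2(\bx_p,r).$$
Both $f^\circ_s$ and $f'_s$ depend on $\underline{\bx}_q$ only through $r=|\underline{\bx}_q|$, so both are radial in the $\mathbb{R}^q$ variable.

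\emph{Part (i).} Since $L_{ij}$ is tangential, it annihilates any function of $(\bx_p,r)$. Hence $L_{ij}f^\circ_s=0$ and $L_{ij}f'_s=0$, and the Leibniz rule gives
$$L_{ij}(\underline{\bx}_q f'_s)=(L_{ij}\underline{\bx}_q)f'_s=(x_iv_j-x_jv_i)f'_s,$$
with $p+1\le i<j\le p+q$. Therefore
$$\Gamma f=-\sum_{i<j}v_i\big(v_j((x_iv_j-x_jv_i)f'_s)\big).$$
Write $a=f'_s$. Artin's theorem (Proposition \ref{artin}, with $v_j\in M$) gives $v_j(v_ja)=-a$. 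For the other term, $v_j(v_ia)=-v_i(v_ja)$ (remarked before the lemma), and then $v_i(v_i(v_ja))=-v_ja$. Each pair $i<j$ therefore contributes
$$-\big(-x_iv_i a - x_j v_j a\big)=x_iv_ia+x_jv_ja.$$
Each index $i$ occurs in exactly $q-1$ pairs, so summing gives $(q-1)\sum_i x_i v_i a=(q-1)\underline{\bx}_q f'_s$. The only care needed is that the associativity steps used involve only two of the imaginary units at a time, together with the alternating property.

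\emph{Part (ii).} Fix $\bx=\bx_p+r\underline{\omega}$. The $D_{\bx_p}$ part appears in both $D_{\bx}$ and $D_{\underline{\omega}}$ and cancels, so the difference is
$$(D_{\underline{\bx}_q}-\underline{\omega}\partial_r)f.$$
For the radial term, $D_{\underline{\bx}_q}F_1(\bx_p,r)=\sum_i v_i\frac{x_i}{r}\partial_rF_1=\underline{\omega}\,\partial_rF_1$, which coincides with $\underline{\omega}\partial_r F_1$, so this contribution vanishes. For the second term, the proof of \eqref{CK-formular-4} (the vanishing associator $[D_{\underline{\bx}_q},\underline{\bx}_q,g]$, generalized to radial $g$) gives
$$D_{\underline{\bx}_q}(\underline{\bx}_q g)=\sum_{i,j}v_i\big(\partial_{x_i}(x_jv_jg)\big)=-qg+\sum_{i,j}x_j v_i(v_j\partial_{x_i}g).$$
Here $\partial_{x_i}g=\frac{x_i}{r}\partial_rg$, so the last sum equals $\frac1r\sum_{i,j}x_ix_jv_i(v_j\partial_rg)$. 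Its off-diagonal terms cancel by the anticommutation $v_i(v_ja)=-v_j(v_ia)$, and its diagonal terms give $-r\,\partial_r g$. Thus
$$D_{\underline{\bx}_q}(\underline{\bx}_q g)=-qg-r\partial_r g.$$
On the other hand, with $g=F_2/r$, we have $\underline{\bx}_q g=\underline{\omega}F_2$, so by Artin
$$\underline{\omega}\partial_r(\underline{\omega}F_2)=-\partial_rF_2=-\partial_r(rg)=-g-r\partial_rg.$$
Subtracting the two expressions gives $(1-q)g=(1-q)f'_s$, as claimed.

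The main obstacle is keeping every product in a form where only two imaginary units and one arbitrary element of $\mathbb{A}$ are associated. At each step we must invoke either Proposition \ref{artin} or the identity $v_i(v_ja)=-v_j(v_ia)$, rather than rebracketing freely; this is what lets us avoid the $M$-admissibility hypothesis.
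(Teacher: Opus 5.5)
Your proof is correct and follows essentially the same route as the paper: you annihilate $f^\circ_s$ and $f'_s$ with the tangential operators to reduce $\Gamma f$ to $\Gamma(\underline{\bx}_q f'_s)$, and for (ii) you cancel $D_{\bx_p}$ and the $F_1$-terms, then compare $D_{\underline{\bx}_q}(\underline{\bx}_q f'_s)=-qf'_s-r\partial_r f'_s$ with $\underline{\omega}(\underline{\omega}\partial_rF_2)=-\partial_rF_2$. The differences are cosmetic: in (i) you get $v_i(v_j(v_ia))=v_ja$ from the anticommutation $v_j(v_ia)=-v_i(v_ja)$ plus Artin, where the paper uses a Moufang identity, and in (ii) you expand the double sum, where the paper first collects $\sum_i x_iv_i=\underline{\bx}_q$ and then applies Artin.
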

\begin{proof}
First of all, by the Artin's theorem  and Moufang identities,  we have,  for all $a\in \mathbb{A}$ and $i,j=1,\ldots,p+q$ with $i\neq j$,
\begin{equation}\label{vvv-a-1}
 v_{i}(v_{j}( v_ja))= v_{i}((v_{j}  v_j)a)= - v_{i}a,
\end{equation}
and
\begin{equation}\label{vvv-a-2}
v_{i}(v_{j} (v_i a))= (v_{i}v_{j}  v_i )a=-(v_{j} v_{i} v_i )a=v_{j}  a.\end{equation}
Let $f(\bx)=F_1(\bx')+\underline{\omega} F_2(\bx')\in {\mathcal{GS}}^{1}(\Omega_{D}, \mathbb{A}) \cap C^{1}(\Omega_{D}, \mathbb{A})$.
Then it holds that for $x\in \Omega_{D}\setminus \mathbb{R}^{p+1}$
 $$L_{ij} f^{\circ}_{s}(\bx)=L_{ij} f^{\prime}_{s}(\bx)=0.$$
Therefore, by (\ref{vvv-a-1}) and (\ref{vvv-a-2}),
 \begin{eqnarray*}
\Gamma f(\bx)
 &=& \Gamma (\underline{\bx}_{q}f^{\prime}_{s}(\bx))
 \\
 &=& -\frac{1}{2}\sum_{p+1\leq i\neq j\leq p+q} v_{i}(v_{j}((x_iv_j-x_jv_i) f^{\prime}_{s}(\bx)))
 \\
  &=&\frac{1}{2}\Big(\sum_{p+1\leq i\neq j\leq p+q} (x_i v_{i} +x_jv_j)\Big)f^{\prime}_{s}(\bx)
 \\
 &=&(q-1) \underline{\bx}_{q}f^{\prime}_{s}(\bx),
\end{eqnarray*}
which proves (i).

To show (ii), we first  show  the following formula
\begin{equation}\label{Dq-F}
	D_{\underline{\bx}_q} (\underline{\omega} F_2(\bx')) =(1-q)  f^{\prime}_{s}(\bx)-  \partial_{r} F_2(\bx').\end{equation}
In fact, it holds that by direct  calculations and  by  the Artin's theorem
 \begin{eqnarray*}
 D_{\underline{\bx}_q} (\underline{\omega} F_2(\bx'))
 &=& D_{\underline{\bx}_q} (\underline{\bx}_q f^{\prime}_{s}(\bx))
 \\
 &=& \sum_{i=p+1}^{p+q}  v_i \Big( (\partial_{x_i}\underline{\bx}_q) f^{\prime}_{s}(\bx)+\underline{\bx}_q ( \partial_{x_i}f^{\prime}_{s}(\bx)) \Big)
 \\
 &=& \sum_{i=p+1}^{p+q}  v_i \Big( v_i f^{\prime}_{s}(\bx)+\underline{\bx}_q \big( \frac{x_i}{r}\partial_{r}f^{\prime}_{s}(\bx) \big) \Big)
 \\
 &=&  \sum_{i=p+1}^{p+q} v_i^{2}  f^{\prime}_{s}(\bx)+  \sum_{i=p+1}^{p+q} (v_i \frac{x_i}{r}) (\underline{\bx}_q \partial_{r}f^{\prime}_{s}(\bx) )
 \\
 &=&  -q  f^{\prime}_{s}(\bx)+\frac{ \underline{\bx}_q }{r}(\underline{\bx}_q \partial_{r}f^{\prime}_{s}(\bx) )
 \\
 &=&  -q  f^{\prime}_{s}(\bx)-r \partial_{r}f^{\prime}_{s}(\bx)
  \\
 &=& (1-q)  f^{\prime}_{s}(\bx)-  \partial_{r} F_2(\bx').
 \end{eqnarray*}
Hence, using again the Artin's theorem and (\ref{Dq-F}), we get
 \begin{eqnarray*}
 (D_{\bx}- D_{\underline{\omega}} ) f(\bx)
 &=& (D_{\underline{\bx}_q}-\underline{\omega}\partial_{r})(F_1(\bx')+\underline{\omega} F_2(\bx'))
 \\
 &=& D_{\underline{\bx}_q} F_1(\bx') -\underline{\omega} \partial_{r} F_1(\bx') +
 D_{\underline{\bx}_q} (\underline{\omega} F_2(\bx')) -    \underline{\omega} (\underline{\omega}\partial_{r} F_2(\bx'))
 \\
 &=& \frac{\underline{\bx}_q}{r}   \partial_r F_1(\bx')  -\underline{\omega} \partial_{r} F_1(\bx')+
 D_{\underline{\bx}_q} (\underline{\omega} F_2(\bx')) + \partial_{r} F_2(\bx')
 \\
 &=&(1-q)  f^{\prime}_{s}(\bx),
\end{eqnarray*}
which completes the proof.
\end{proof}

\begin{lemma}  \label{Fueter-Sce-Qian-lemma-k}
Let $f(\bx)=F_1(\bx')+\underline{\omega} F_2(\bx') \in  \mathcal{GSR} (\Omega_{D}, \mathbb{A})$. Then it holds that, for all $k \in \mathbb{N}$ and $\bx\in \Omega_{D}$,
\begin{equation}\label{k}
 \Delta_{\bx}^{k}f(\bx)= C_{q}(k)( A_{k}(\bx') +\underline{\omega} B_{k}(\bx') ),
\end{equation}
and
\begin{equation}\label{d-k}
D_{\bx} \Delta_{\bx}^{k}f(\bx)= -C_{q}(k+1) \frac{ B_{k}(\bx')}{r},
\end{equation}
where  $C_{q}(k):=(q-1)(q-3)\cdots(q-2k+1)$ for $k\in \mathbb{N}\setminus\{0\}$ and $C_{q}(0)=1$, and $A_{k}, B_{k}$ are given by (\ref{A-k-B-k}).
\end{lemma}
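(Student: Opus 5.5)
We argue by induction on $k$, proving (\ref{k}) first and then deducing (\ref{d-k}) from it. For $k=0$, (\ref{k}) is just $f=F_1+\underline{\omega}F_2$, since $A_0=F_1$, $B_0=F_2$ and $C_q(0)=1$.

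For the inductive step of (\ref{k}), assume $\Delta_{\bx}^{k}f=C_q(k)(A_k+\underline{\omega}B_k)$. The right-hand side is a generalized partial-slice function: $A_k$ is even in $r$ and $B_k$ is odd, because each operator $\frac1r\partial_r$ and $\partial_r\frac1r$ preserves the parity of $F_1$ (even) and $F_2$ (odd). It is also of class $C^2$, since $f$ is real analytic by Proposition \ref{GSR-Harmonic}. Hence Lemma \ref{Fueter-Sce-Qian-lemma-1} applies and gives
$$\Delta_{\bx}^{k+1}f=C_q(k)\Big(\Delta_{\bx'}A_k+\underline{\omega}\Delta_{\bx'}B_k+(q-1)\big(A_{k+1}+\underline{\omega}B_{k+1}\big)\Big).$$

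It remains to compute $\Delta_{\bx'}A_k$ and $\Delta_{\bx'}B_k$, using Lemma \ref{lemma-inter-relation} together with Lemma \ref{d-dbar-laplace} on $\R^{p+1}$. Apply $\overline{D}_{\bx_p}$ to the first equation of (\ref{inter-relation}) and substitute the second equation, $\overline{D}_{\bx_p}B_k=-\partial_rA_k$. This yields
$$\Delta_{\bx_p}A_k=-\partial_r^2A_k-\frac{2k}{r}\partial_rA_k,$$
so that $\Delta_{\bx'}A_k=-2kA_{k+1}$.

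For $B_k$, apply $D_{\bx_p}$ to the second equation of (\ref{inter-relation}):
$$\Delta_{\bx_p}B_k=-\partial_rD_{\bx_p}A_k=-\partial_r\Big(\partial_rB_k+\frac{2k}{r}B_k\Big).$$
Then use the identity $\partial_r\big(\frac{1}{r}B_k\big)=B_{k+1}$ to obtain $\Delta_{\bx'}B_k=-2kB_{k+1}$.

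The commutation of $\partial_r$ and $\frac1r$ with $D_{\bx_p}$ and $\overline{D}_{\bx_p}$ is harmless, since these operators have constant coefficients. Substituting into the displayed formula gives the coefficient $C_q(k)(q-1-2k)=C_q(k+1)$, which closes the induction.

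For (\ref{d-k}), apply $D_{\bx}$ to (\ref{k}) and split it as $D_{\bx}=D_{\underline{\omega}}+(D_{\bx}-D_{\underline{\omega}})$. By Proposition \ref{relation-Dirac-and-Dirac-slice}(ii), the second piece gives $(1-q)\frac{1}{r}C_q(k)B_k$, because the spherical derivative of $A_k+\underline{\omega}B_k$ is $B_k/r$. For the first piece, use Lemma \ref{a-w-b} to write $D_{\bx_p}(\underline{\omega}B_k)=\underline{\omega}\,\overline{D}_{\bx_p}B_k$, and Artin's theorem to write $\underline{\omega}(\underline{\omega}\partial_rB_k)=-\partial_rB_k$. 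This gives
$$D_{\underline{\omega}}(A_k+\underline{\omega}B_k)=D_{\bx_p}A_k-\partial_rB_k+\underline{\omega}\big(\overline{D}_{\bx_p}B_k+\partial_rA_k\big)=\frac{2k}{r}B_k,$$
by (\ref{inter-relation}). The total is $C_q(k)(2k+1-q)\frac{B_k}{r}=-C_q(k+1)\frac{B_k}{r}$. At points with $r=0$, both identities hold by continuity, using the limit conventions of Remark \ref{0-lim}.

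The main obstacle is bookkeeping rather than conceptual. The computation of $\Delta_{\bx'}B_k$ requires careful handling of the non-commuting operators $\partial_r$ and $\frac1r$. The nonassociative steps must also be justified, and these are exactly the ones covered by Lemma \ref{a-w-b} and Artin's theorem.
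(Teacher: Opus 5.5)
Your proposal is correct and follows essentially the same route as the paper. For (\ref{k}) both arguments run an induction through Lemma \ref{Fueter-Sce-Qian-lemma-1}, obtaining $\Delta_{\bx'}A_k=-2kA_{k+1}$ and $\Delta_{\bx'}B_k=-2kB_{k+1}$ from Lemma \ref{lemma-inter-relation} and Lemma \ref{d-dbar-laplace}; for (\ref{d-k}) both split $D_{\bx}=D_{\underline{\omega}}+(D_{\bx}-D_{\underline{\omega}})$ and use Proposition \ref{relation-Dirac-and-Dirac-slice}(ii). The only differences are cosmetic: you start the induction at $k=0$ instead of $k=1$, and you make explicit the parity of $(A_k,B_k)$ needed to invoke Lemma \ref{Fueter-Sce-Qian-lemma-1}, which the paper leaves implicit.
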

\begin{proof}
Let $f(\bx)=F_1(\bx')+\underline{\omega} F_2(\bx') \in  \mathcal{GSR} (\Omega_{D}, \mathbb{A})$.
 In view of Proposition \ref{GSR-Harmonic}  and Remark \ref{0-lim}, both sides of  (\ref{k}) and (\ref{d-k}) are  well-defined.   As in the proof of Lemma \ref{Fueter-Sce-Qian-lemma-1},  we only prove assertions  for  $\bx\in \Omega_{D}\setminus \mathbb{R}^{p+1}$ since  the case of $\bx=(\bx_{p},0)\in\Omega_{D}$  follows by the continuity. We prove (\ref{k}) by induction.  By Lemma    \ref{Fueter-Sce-Qian-lemma-1} and Proposition \ref{GSR-Harmonic}, we have
 $$\Delta_{\bx} f(\bx)=   (q-1) ( (\frac{1}{r}\partial_{r}) F_1(\bx')+\underline{\omega} (\partial_{r}\frac{1}{r}) F_2(\bx') ),$$
which shows (\ref{k}) for $k=1$.

Now we suppose that (\ref{k}) holds for $k-1$, i.e.,
\begin{equation}\label{k-1}
 \Delta_{\bx}^{k-1}f(\bx)=C_{q}(k-1)(  ( A_{k-1}(\bx') +\underline{\omega} B_{k-1}(\bx') ) ),
\end{equation}
where $A_{k-1}, B_{k-1}$ are given by (\ref{A-k-B-k}) and we show that (\ref{k}) holds for $k$.  By Lemma \ref{d-dbar-laplace} and  (\ref{inter-relation}) in Lemma  \ref{lemma-inter-relation}, we have
 \begin{eqnarray*}
 \Delta_{\bx_{p}}  A_{k-1}(\bx')
&=&\overline{D}_{\bx_p} (D_{\bx_p}  A_{k-1}(\bx'))\\
&=& \overline{D}_{\bx_p} (  \partial_{r}  +\frac{2k-2}{r})B_{k-1}(\bx')
 \\
&=&(  \partial_{r}  +\frac{2k-2}{r})\overline{D}_{\bx_p}B_{k-1}(\bx')\\
&=&(  \partial_{r}  +\frac{2k-2}{r}) (-\partial_{r}  A_{k-1}(\bx'))
\\
&=&-\partial_{r}^{2} A_{k-1}(\bx')-(2k-2)A_{k}(\bx'),
\end{eqnarray*}
and
 \begin{eqnarray*}
 \Delta_{\bx_{p}}  B_{k-1}(\bx')
&=&D_{\bx_p} (\overline{D}_{\bx_p} B_{k-1}(\bx'))\\
&=&D_{\bx_p} (- \partial_{r} A_{k-1}(\bx') )
 \\
&=&-\partial_{r}^{2} B_{k-1}(\bx')-(2k-2)\partial_{r} \frac{B_{k-1}(\bx')}{r}
\\
&=& -\partial_{r}^{2} B_{k-1}(\bx')-(2k-2) B_{k}(\bx').
\end{eqnarray*}
Hence,  combining with Lemma  \ref{Fueter-Sce-Qian-lemma-1}, we have
\begin{eqnarray*}
\frac{\Delta_{\bx}^{k}f(\bx)}{C_{q}(k-1)}
&=&\Delta_{\bx'} A_{k-1}(\bx')+\underline{\omega} \Delta_{\bx'} B_{k-1}(\bx') + (q-1)(\frac{1}{r}\partial_{r} A_{k-1}(\bx') +\underline{\omega} \partial_{r} \frac{B_{k-1}(\bx') }{r})
\\
&=&-(2k-2)(A_{k}(\bx') + \underline{\omega}  B_{k}(\bx')) +(q-1) (  A_{k}(\bx') +\underline{\omega}  B_{k}(\bx') )
\\
&=&(q-2k+1) (  A_{k}(\bx') +\underline{\omega}  B_{k}(\bx') ),
\end{eqnarray*}
which gives (\ref{k}).

 Now we can use  (\ref{k}), Lemma  \ref{lemma-inter-relation}, and  Proposition \ref{relation-Dirac-and-Dirac-slice}  to obtain (\ref{d-k}) as follows:
\begin{eqnarray*}
&&D_{\bx} \Delta_{\bx}^{k}f(\bx)\\
&=&C_{q}(k) \Big(D_{\underline{\omega}} ( A_{k}(\bx') +\underline{\omega} B_{k}(\bx') ) + (1-q)\frac {B_{k}(\bx')}{r} \Big) \\
&=&C_{q}(k)\Big(D_{\bx_p} A_{k}(\bx')- \partial_r B_{k}(\bx')+ \underline{\omega} (\overline{D}_{\bx_p} B_{k}(\bx') + \partial_{r} A_{k}(\bx') )+(1-q)\frac{B_{k}(\bx')}{r}\Big)
 \\
&=&C_{q}(k) (2k+1-q) \frac{B_{k}(\bx')}{r}\\
&=& -C_{q}(k+1) \frac{ B_{k}(\bx')}{r}.
\end{eqnarray*}
The proof is complete.
\end{proof}

In the next result we prove that, in the setting of generalized partial-slice  functions over
 real alternative $\ast$-algebras,     the  condition  of  monogenicity can be described by  a Vekua-type system as follows.
\begin{lemma}\label{GS-monogenic-lemma}
Let $D \subseteq \mathbb{R}^{p+2}$  be a domain invariant under the reflection with respect the $(p+2)$-th variable, and let $f(\bx)=F_1(\bx')+\underline{\omega} F_2(\bx')  \in {\mathcal{GS}}^{1}(\Omega_{D}, \mathbb{A}) \cap C^{1}(\Omega_{D}, \mathbb{A})$, where $\bx=\bx_p+r\underline{\omega}\in \Omega_{D}$, $\bx'=(\bx_p, r)\in D$. Then $f \in \mathcal {M}(\Omega_{D}, \mathbb{A}) $ if and only if  the components $F_1,F_2$ of
$f$ satisfy the system
 \begin{eqnarray}\label{GS-is-Monogenic}
 \left\{
\begin{array}{ll}
D_{\bx_p}  F_1(\bx')- \partial_{r} F_2(\bx')=\frac{q-1}{r}F_2(\bx'),
\\
 \overline{D}_{\bx_p}  F_2(\bx')+ \partial_{r} F_1(\bx')=0,
\end{array}\quad \quad \bx'\in  D \setminus \mathbb{R}^{p+1}.
\right.
\end{eqnarray}
\end{lemma}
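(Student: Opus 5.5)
The plan is to reduce the monogenicity of $f$ to the slice operator $D_{\underline{\omega}}$ via Proposition \ref{relation-Dirac-and-Dirac-slice}(ii). On $\Omega_D\setminus\mathbb{R}^{p+1}$ it gives
$$D_{\bx}f(\bx)=D_{\underline{\omega}}f(\bx)+(1-q)f'_s(\bx),$$
with $f'_s(\bx)=\frac{1}{r}F_2(\bx')$ by (\ref{spherical-value-derivative}).

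The first step is to expand $D_{\underline{\omega}}f=(D_{\bx_p}+\underline{\omega}\partial_r)(F_1+\underline{\omega}F_2)$ exactly as in the last computation in the proof of Lemma \ref{Fueter-Sce-Qian-lemma-k}. Lemma \ref{a-w-b} gives $D_{\bx_p}(\underline{\omega}F_2)=\underline{\omega}(\overline{D}_{\bx_p}F_2)$, and Artin's theorem gives $\underline{\omega}(\underline{\omega}\partial_rF_2)=-\partial_rF_2$. Together these yield
$$D_{\bx}f(\bx)=\Big(D_{\bx_p}F_1-\partial_rF_2-\frac{q-1}{r}F_2\Big)+\underline{\omega}\big(\overline{D}_{\bx_p}F_2+\partial_rF_1\big),$$
valid for every $\bx=\bx_p+r\underline{\omega}$ with $r>0$. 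Denote the two brackets by $G_1(\bx')$ and $G_2(\bx')$. Both depend only on $\bx'$ and not on $\underline{\omega}$.

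The ``if'' direction is then immediate. If the system (\ref{GS-is-Monogenic}) holds, then $D_{\bx}f=0$ on $\Omega_D\setminus\mathbb{R}^{p+1}$. Since $f\in C^1(\Omega_D,\mathbb{A})$, $D_{\bx}f$ is continuous, and $\Omega_D\cap\mathbb{R}^{p+1}$ has empty interior in $M$ (as $q\geq1$), so $D_{\bx}f=0$ on all of $\Omega_D$ by density.

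For the ``only if'' direction, I must extract $G_1=G_2=0$ from $G_1+\underline{\omega}G_2=0$. This separation step is the only real obstacle, because $G_1,G_2$ are $\mathbb{A}$-valued and $\mathbb{A}$ is not a division algebra in general. The plan is to use the reflection $\bx\mapsto\bx_\diamond$, i.e.\ replace $\underline{\omega}$ by $-\underline{\omega}$ at the same $\bx'$. Both points lie in $\Omega_D$ because it is p-symmetric, and the formula above holds at both because $G_1,G_2$ are independent of $\underline{\omega}$. Monogenicity gives $G_1+\underline{\omega}G_2=0$ and $G_1-\underline{\omega}G_2=0$. Adding yields $G_1=0$, and subtracting yields $\underline{\omega}G_2=0$. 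Then $G_2=-\underline{\omega}(\underline{\omega}G_2)=0$, again by Artin's theorem. Note that we only use that $\mathbb{S}$ is symmetric, $-\underline{\omega}\in\mathbb{S}$, and no injectivity of multiplication beyond $\underline{\omega}^2=-1$.

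Two points need care. First, $F_1,F_2$ must be $C^1$ so that the slice computation is legitimate; this is guaranteed since $f\in\mathcal{GS}^1$. Second, the system is only claimed on $D\setminus\mathbb{R}^{p+1}$, which matches the range where Proposition \ref{relation-Dirac-and-Dirac-slice} applies and where $\frac{1}{r}F_2$ makes sense.
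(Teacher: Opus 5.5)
Your proposal is correct and follows the paper's own proof. You expand $D_{\underline{\omega}}f$ via Lemma \ref{a-w-b} and Artin's theorem, add the $(1-q)f'_s$ correction from Proposition \ref{relation-Dirac-and-Dirac-slice}(ii), separate the two components for the ``only if'' direction, and use density of $\Omega_D\setminus\mathbb{R}^{p+1}$ plus continuity for the ``if'' direction. Your $\pm\underline{\omega}$ step just spells out what the paper calls ``the arbitrariness of $\underline{\omega}$''; the points of $D\setminus\mathbb{R}^{p+1}$ with $r<0$ then follow because your $G_1$ is even and $G_2$ is odd in $r$.
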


\begin{proof}
Let $f(\bx)=f(\bx_p+r\underline{\omega})=F_1(\bx')+\underline{\omega} F_2(\bx')  \in {\mathcal{GS}}^{1}(\Omega_{D}, \mathbb{A})\cap C^{1}(\Omega_{D}, \mathbb{A})$.   Recalling the Artin's theorem and Lemma \ref{a-w-b}, it holds that
$$D_{\underline{\omega}}f(\bx)  = D_{\bx_p}  F_1(\bx')- \partial_{r} F_2(\bx')+ \underline{\omega}(\overline{D}_{\bx_p} F_2(\bx')  +  \partial_{r} F_1(\bx')).$$
Hence, we deduce that  from  Proposition \ref{relation-Dirac-and-Dirac-slice} (ii)
\begin{eqnarray}\label{relation-slice-and-slice-monogenic}
D_{\bx} f(\bx) =D_{\bx_p}  F_1(\bx')- \partial_{r} F_2(\bx')+\frac{1-q}{r}F_2(\bx')+\underline{\omega}( \overline{D}_{\bx_p}  F_2(\bx')+ \partial_{r} F_1(\bx')),
\end{eqnarray}
 for all $\bx\in\Omega_{D}\setminus \mathbb{R}^{p+1}$.

Suppose that  $f \in \mathcal {M}(\Omega_{D}, \mathbb{A}) $. Then  we deduce that the system \eqref{GS-is-Monogenic} holds  by the arbitrariness of $\underline{\omega}\in\mathbb{S}$ in (\ref{relation-slice-and-slice-monogenic}).

 Conversely, if the components  $F_1,F_2$ of $f(\bx)=F_1(\bx')+\underline{\omega} F_2(\bx')  \in {\mathcal{GS}}^{1}(\Omega_{D}, \mathbb{A})\cap C^{1}(\Omega_{D}, \mathbb{A})$   satisfy   (\ref{GS-is-Monogenic}). Then it follows from (\ref{relation-slice-and-slice-monogenic})  that  $D_{\bx} f=0$ on $\Omega_{D}\setminus \mathbb{R}^{p+1}$. In view of that  $\Omega_{D}\setminus \mathbb{R}^{p+1}$ is dense in $\Omega_D$ and  $f\in C^1(\Omega_D, \mathbb{A})$, we deduce that  $D_{\bx} f=0$ for all $\bx\in \Omega_{D}$, which completes the proof.
\end{proof}

With all the previous results we can now prove the main result  in this section.
\begin{theorem}[Fueter-Sce  theorem]\label{Fueter-theorem}
Let     $f(\bx)=F_1(\bx')+\underline{\omega} F_2(\bx') \in \mathcal{GSR}(\Omega_{D}, \mathbb{A})$.  Then, for odd $q\in \mathbb{N}$,
the function $ \Delta_{\bx}^{\frac{q-1}{2}}f(\bx)$ is monogenic in $\Omega_{D}$ and is given by
\begin{eqnarray}\label{q-1-formula}
 \Delta_{\bx}^{\frac{q-1}{2}}f(\bx)=(q-1)!! (A(\bx')+\underline{\omega} B(\bx')),
\end{eqnarray}
 where
\begin{eqnarray}\label{q-1-formula-A-B}
A(\bx') =  (\frac{1}{r}\partial_{r})^{\frac{q-1}{2}}F_1(\bx'),\quad  B(\bx')=  (\partial_{r}\frac{1}{r})^{\frac{q-1}{2}}F_2(\bx').
 \end{eqnarray}
\end{theorem}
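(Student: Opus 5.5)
The plan is to read both assertions off Lemma \ref{Fueter-Sce-Qian-lemma-k} by specializing to $k=\frac{q-1}{2}$, which is a nonnegative integer because $q$ is odd.

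First I would settle regularity. By Proposition \ref{GSR-Harmonic}, $f$ is real analytic on $\Omega_{D}$ and $F_1,F_2$ are real analytic on $D$. Hence $\Delta_{\bx}^{\frac{q-1}{2}}f$ is a well-defined $C^{\infty}$ (indeed real analytic) function on all of $\Omega_{D}$, including the points of $\Omega_D\cap\mathbb{R}^{p+1}$. By Remark \ref{0-lim}, the functions $A=A_{\frac{q-1}{2}}$ and $B=B_{\frac{q-1}{2}}$ of (\ref{q-1-formula-A-B}) extend continuously to $r=0$.

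Next, formula (\ref{q-1-formula}) is exactly (\ref{k}) with $k=\frac{q-1}{2}$. It only remains to evaluate the constant:
$$C_q\Big(\tfrac{q-1}{2}\Big)=(q-1)(q-3)\cdots\Big(q-2\cdot\tfrac{q-1}{2}+1\Big)=(q-1)(q-3)\cdots 2=(q-1)!!.$$
For $q=1$ the product is empty and $C_q(0)=1=0!!$, consistent with the convention. On $\Omega_D\cap\mathbb{R}^{p+1}$ the identity holds with the limit convention of Remark \ref{0-lim}, by continuity.

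For monogenicity I would apply (\ref{d-k}) with the same $k$:
$$D_{\bx}\Delta_{\bx}^{\frac{q-1}{2}}f(\bx)=-C_q\Big(\tfrac{q+1}{2}\Big)\frac{B(\bx')}{r}, \qquad \bx\in\Omega_D\setminus\mathbb{R}^{p+1}.$$
The product $C_q(\frac{q+1}{2})=(q-1)(q-3)\cdots(q-q)$ contains the factor $q-2\cdot\frac{q+1}{2}+1=0$, so the right-hand side vanishes identically off $\mathbb{R}^{p+1}$. Since $\Omega_D\setminus\mathbb{R}^{p+1}$ is dense in $\Omega_D$ and $D_{\bx}\Delta_{\bx}^{\frac{q-1}{2}}f$ is continuous, it vanishes on all of $\Omega_D$. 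This is the same density argument used at the end of Lemma \ref{GS-monogenic-lemma}.

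The main obstacle is the points on $\mathbb{R}^{p+1}$, where the factor $B/r$ in (\ref{d-k}) is singular and the lemma is stated through limits. I would deliberately avoid evaluating at $r=0$ and rely on the global smoothness from Proposition \ref{GSR-Harmonic} together with density, as above.

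As a cross-check, one could instead verify the Vekua system (\ref{GS-is-Monogenic}) of Lemma \ref{GS-monogenic-lemma} for the stem pair $(A,B)$. This follows from (\ref{inter-relation}) with $2k=q-1$, and it yields monogenicity by the same lemma.
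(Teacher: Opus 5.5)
Your proposal is correct and is essentially the paper's proof: formula (\ref{q-1-formula}) is read off from (\ref{k}) at $k=\frac{q-1}{2}$ with $C_q(\frac{q-1}{2})=(q-1)!!$, exactly as the paper does. For monogenicity, the paper checks the Vekua system of Lemma \ref{GS-monogenic-lemma} for $(A,B)$ via (\ref{inter-relation}) with $2k=q-1$, which is your ``cross-check''. Your route through (\ref{d-k}) with $C_q(\frac{q+1}{2})=0$ plus density is the same computation, since (\ref{d-k}) is derived from exactly those ingredients.
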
 \begin{proof}
Let     $f(\bx)=F_1(\bx')+\underline{\omega} F_2(\bx') \in \mathcal{GSR}(\Omega_{D}, \mathbb{A})$.
By (\ref{k}) in  Lemma     \ref{Fueter-Sce-Qian-lemma-k}, the function $\Delta_{\bx}^{\frac{q-1}{2}}f(\bx)$  takes the form  (\ref{q-1-formula}). Then,  by Lemma  \ref{GS-monogenic-lemma},   it remains  to   verify that the pair $(A, B)$ given by (\ref{q-1-formula-A-B}) satisfies   (\ref{GS-is-Monogenic}), which is exactly (\ref{inter-relation}) with $k=\frac{q-1}{2}$ in Lemma  \ref{lemma-inter-relation}. The proof is complete.
 \end{proof}
   When $\mathbb{A}=\mathbb{H}=M,$   Theorem \ref{Fueter-theorem}  was originally obtained by Fueter  \cite{Fueter}.  The Fueter theorem  was generalized by Sce  \cite{Sce} (see an English translation \cite{Colombo-Sabadini-Struppa-20}) in 1957 to the case of Theorem \ref{Fueter-theorem} for  $\mathbb{A}=\mathbb{R}_{0,m}, M=\mathbb{R}^{m+1},$  $(p,q)=(0,m)$  with odd $m$  and   by Qian  \cite{Qian-97} in 1997   for
 even $m$. See e.g. \cite{Qian-15} and references therein for more details of the Fueter-Sce-Qian theorem.

\section{Connections  between the Fueter-Sce theorem and  CK-extensions}
In this section, we shall establish some connections  between the Fueter-Sce theorem and the three types of  CK-extensions given by Section 3.

We denote the monogenic  class   of generalized partial-slice functions on  the p-symmetric domain $\Omega$ by
$$\mathcal{AM}(\Omega,\mathbb{A}):=\mathcal{M}(\Omega,\mathbb{A}) \cap \mathcal{GS}(\Omega,\mathbb{A}),$$
and by $\mathcal{A}(\Omega_{0}, \mathbb{A})$  the set of all $\mathbb{A}$-valued  real analytic  functions defined in $ \Omega_{0} \subseteq \mathbb{R}^{p+1}$ with unique generalized partial-slice monogenic  extension to $\Omega$.
\begin{theorem}\label{CK-Fueter-relation-M}
Let $\Omega\subseteq M$ be a p-symmetric slice domain and $f:\Omega\rightarrow \mathbb{A}$ be a  generalized partial-slice monogenic function.    Then,  for odd $q\in \mathbb{N}$,
we have
 $$\Delta_{\bx}^{\frac{q-1}{2}} f(\bx)
 =\gamma_q GCK [ \Delta_{\bx_{p}}^{\frac{q-1}{2}}f(\bx_{p}) ],$$
which gives the  commutative diagram
$$\xymatrix{
 \mathcal{A}(\Omega_{0}, \mathbb{A}) \ar[d]_{ \gamma_q \Delta_{\bx_{p}}^{\frac{q-1}{2}}} \ar[r]^{CK} & \mathcal{GSM}(\Omega, \mathbb{A}) \ar[d]^{\Delta_{\bx}^{\frac{q-1}{2}}} \\
  \mathcal{A}(\Omega_{0}, \mathbb{A}) \ar[r]^{GCK} & \mathcal{AM}(\Omega, \mathbb{A}),}$$
 where $\gamma_q$ is given by
 \begin{equation}\label{gamma-q}
\gamma_q:=(-1)^{\frac{q-1}{2}}\frac{(q-1)!!}{(q-2)!!}.\end{equation} \end{theorem}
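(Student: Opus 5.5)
We will reduce the identity to the restrictions of both sides to $\Omega_0=\Omega\cap\mathbb{R}^{p+1}$, using uniqueness of the monogenic generalized CK-extension. By Theorem \ref{relation-GSR-GSM}(ii), $f\in\mathcal{GSR}(\Omega_D,\mathbb{A})$. Theorem \ref{Fueter-theorem} then shows that $g:=\Delta_{\bx}^{\frac{q-1}{2}}f$ is monogenic and generalized partial-slice, so $g\in\mathcal{AM}(\Omega,\mathbb{A})$. Proposition \ref{GSR-Harmonic} shows that $f$, and hence $g$, is real analytic. As in the proof of Proposition \ref{GSR-Harmonic}, we write $g(\bx)=G_1(\bx_p,r^2)+\underline{\bx}_q G_2(\bx_p,r^2)$ and expand in powers of $r^2=-\underline{\bx}_q^2$. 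This gives $g=\sum_k \underline{\bx}_q^k A_k(\bx_p)$ near $\Omega_0$, with $A_0=g|_{\Omega_0}$. The uniqueness part of Theorem \ref{generalized-CK-extension} then yields $g=GCK[g|_{\Omega_0}]$ near $\Omega_0$, and on all of $\Omega$ by real analyticity and connectedness of the slice domain. It therefore suffices to prove
$$\Delta_{\bx}^{\frac{q-1}{2}}f(\bx_p)=\gamma_q\,\Delta_{\bx_p}^{\frac{q-1}{2}}f(\bx_p),\qquad \bx_p\in\Omega_0 .$$

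To compute the left side, set $n=\frac{q-1}{2}$. By (\ref{q-1-formula}), at $r=0$ we get $(q-1)!!\,A(\bx_p,0)$ with $A=(\frac1r\partial_r)^nF_1$, since the $B$-term vanishes there by oddness. Because $F_1$ is even in $r$, we expand $F_1(\bx_p,r)=\sum_j a_j(\bx_p)r^{2j}$. Then $(\frac1r\partial_r)^n r^{2j}|_{r=0}$ is nonzero only for $j=n$, where it equals $2^n n!$. Hence the left side is $(q-1)!!\,2^n n!\,a_n(\bx_p)$.

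To identify $a_n$, we use $\Delta_{\bx'}F_1=0$ from Proposition \ref{GSR-Harmonic}. This gives $(2j+2)(2j+1)a_{j+1}=-\Delta_{\bx_p}a_j$, so $a_n=(-1)^n\Delta_{\bx_p}^n a_0/(2n)!$ with $a_0=F_1(\bx_p,0)=f(\bx_p)$. This is consistent with $f=CK[f|_{\Omega_0}]$ by Theorem \ref{CK-slice-monogenic} and (\ref{series-CK}). Combining, the constant is
$$(-1)^n\frac{(q-1)!!\,2^n n!}{(2n)!}=(-1)^n\frac{(q-1)!!\,(q-1)!!}{(q-1)!}=(-1)^n\frac{(q-1)!!}{(q-2)!!},$$
using $2^nn!=(2n)!!=(q-1)!!$ and $(q-1)!=(q-1)!!(q-2)!!$. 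This is exactly $\gamma_q$.

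The main obstacle is the first step: checking that $g$ is indeed of the form (\ref{Cauchy-Kovalevska-extension-sum}) on a neighbourhood compatible with Theorem \ref{generalized-CK-extension}, and that the two sides agree on the whole of $\Omega$. If the power-series route is awkward, we instead compare $g$ and $\gamma_q GCK[\Delta_{\bx_p}^nf]$ directly. Both lie in $\mathcal{AM}$, and the stem components of each satisfy the Vekua system (\ref{GS-is-Monogenic}) from Lemma \ref{GS-monogenic-lemma}. Both are therefore determined by their even-in-$r$ restriction at $r=0$, by the recursion (\ref{inter-relation-f}), and we then conclude by the identity principle for real analytic functions. The commutative diagram follows immediately.
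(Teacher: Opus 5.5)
Your proposal is correct and follows essentially the same route as the paper. You apply the Fueter--Sce formula (\ref{q-1-formula}), restrict to $r=0$ where the odd component $B$ vanishes, evaluate $(\frac{1}{r}\partial_r)^{\frac{q-1}{2}}F_1$ at $r=0$ from the even expansion of $F_1$, and conclude by the uniqueness in Theorem~\ref{generalized-CK-extension}; the paper reads those coefficients off the CK series, while you derive the same ones from $\Delta_{\bx'}F_1=0$. Your extra check, via the Whitney representation, that $\Delta_{\bx}^{\frac{q-1}{2}}f$ really admits an expansion $\sum_k\underline{\bx}_q^kA_k(\bx_p)$ near $\Omega_0$ makes explicit a point the paper only asserts when it invokes that uniqueness.
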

  \begin{proof}
  Let     $f  \in \mathcal{GSM}(\Omega, \mathbb{A})$, where  $\Omega\subseteq M$ is a p-symmetric slice domain.
 From Theorem    \ref{relation-GSR-GSM}, we can set    $f(\bx)=F_1(\bx')+\underline{\omega} F_2(\bx') \in \mathcal{GSR}(\Omega_{D}, \mathbb{A})$.
By Theorem \ref{Fueter-theorem}, we have
  $$\Delta_{\bx}^{\frac{q-1}{2}}f(\bx)=(q-1)!! (A(\bx')+\underline{\omega} B(\bx'))  \in \mathcal {AM}(\Omega, \mathbb{A}),$$
 where $A, B$  are given by (\ref{q-1-formula-A-B}).
Note that $B(\bx_{p}, r)$ is odd with respect to  the variable $r$, so we  obtain that
 $$\Delta_{\bx}^{\frac{q-1}{2}}f(\bx) \mid_{\underline{\bx}_{q}=0}=(q-1)!! A(\bx_p,0) =(q-1)!!  (\frac{1}{r}\partial_{r})^{\frac{q-1}{2}}[F_1](\bx_p,0),$$
 where $ F_1$ is given by, from  Theorems \ref{slice-Cauchy-Kovalevsky-extension} and \ref{Identity-theorem},
 $$F_1(\bx')= \sum_{k=0}^{+\infty} \frac{r^{2k}}{(2k)!}  (-\Delta_{\bx_p})^{k} f(\bx_{p})=
\sum_{k=0}^{+\infty} \frac{r^{2k}}{(2k)!}  (-\Delta_{\bx_p})^{k} [F_1](\bx_{p},0 ).$$
Through direct calculations, we have
 $$(\frac{1}{r}\partial_{r})^{\frac{q-1}{2}}[F_1](\bx_{p},0)= \frac{(-1)^{\frac{q-1}{2}}}{(q-2)!!}  \Delta_{\bx_p}^{\frac{q-1}{2}} f(\bx_{p}).$$
By Theorem \ref{generalized-CK-extension}, the   monogenic function $\Delta_{\bx}^{\frac{q-1}{2}}f(\bx)$ is completely determined by its restriction to $\mathbb{R}^{p+1}$. Hence,
 $$\Delta_{\bx}^{\frac{q-1}{2}} f(\bx)= (-1)^{\frac{q-1}{2}}\frac{(q-1)!!}{(q-2)!!} GCK
   [ \Delta_{\bx_{p}}^{\frac{q-1}{2}}f(\bx_{p}) ](\bx)=\gamma_q GCK [ \Delta_{\bx_{p}}^{\frac{q-1}{2}}f(\bx_{p}) ](\bx),$$
which completes the proof.
\end{proof}
Recalling the definition of $\mathcal{AH}(\Omega, \mathbb{A})$ in (\ref{AH}) we can state and prove the following result.
\begin{theorem}\label{CK-P-E-O}
Let   $\Omega_{0}$ be a domain in  $\mathbb{R}^{p+1}$ and  consider the real analytic function $f_{0}\in C^{\omega}(\Omega_0, \mathbb{A})$.  Then,  for odd $q\in\mathbb{N}$,  the partial  even and odd parts of   $\Delta_{\bx}^{\frac{q-1}{2}}\circ CK[f_0] $ are harmonic  and generalized partial-slice functions in an $(m +1)$-dimensional p-symmetric slice domain $\Omega$ in $M$ with $\Omega_0\subset \Omega$, which give
$$\xymatrix@C=0.5cm{
    & \mathcal{A}(\Omega_{0}, \mathbb{A}) \ar[rr]^{CK} && \mathcal{GSM}(\Omega, \mathbb{A}) \ar[rr]^{\mathcal{PE}\circ  \Delta_{\bx}^{\frac{q-1}{2}}}
     &&\mathcal{AH}(\Omega, \mathbb{A}), }$$
 and
    $$\xymatrix@C=0.5cm{
    & \mathcal{A}(\Omega_{0}, \mathbb{A}) \ar[rr]^{CK} && \mathcal{GSM}(\Omega, \mathbb{A}) \ar[rr]^{\mathcal{PO}\circ  \Delta_{\bx}^{\frac{q-1}{2}}} &&\mathcal{AH}(\Omega, \mathbb{A})}.$$ \end{theorem}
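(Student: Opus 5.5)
Let $g:=CK[f_0]$, which by Theorem \ref{slice-Cauchy-Kovalevsky-extension} lies in $\mathcal{GSM}(\Omega,\mathbb{A})$ on a p-symmetric slice domain $\Omega\subseteq\Omega_0^\ast$ with $\Omega_0\subset\Omega$. By Theorem \ref{relation-GSR-GSM}(ii) we may write $g(\bx)=F_1(\bx')+\underline{\omega}F_2(\bx')\in\mathcal{GSR}(\Omega_D,\mathbb{A})$, with $F_1,F_2$ explicitly given by the two series in (\ref{series-CK}). The plan is to compute $h:=\Delta_{\bx}^{\frac{q-1}{2}}g$ explicitly and then read off its partial even and odd parts. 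By Theorem \ref{Fueter-theorem} (equivalently (\ref{k}) of Lemma \ref{Fueter-Sce-Qian-lemma-k}),
$$h(\bx)=(q-1)!!\big(A(\bx')+\underline{\omega}B(\bx')\big),$$
with $A,B$ as in (\ref{q-1-formula-A-B}). We know $h$ is monogenic, hence harmonic by Lemma \ref{d-dbar-laplace}, and is real analytic by Proposition \ref{GSR-Harmonic}.

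\textbf{Step 1.} We check that $h$ is a generalized partial-slice function. The operator $\frac1r\partial_r$ maps $r$-even functions to $r$-even functions, and $\partial_r\frac1r$ maps $r$-odd functions to $r$-odd functions. Hence $(A,B)$ is again an even-odd pair, so $h\in\mathcal{GS}(\Omega,\mathbb{A})$. By (\ref{spherical-value-e-o}) and (\ref{spherical-value-derivative}) we then have
$$\mathcal{PE}[h]=(q-1)!!\,A(\bx'),\qquad \mathcal{PO}[h]=(q-1)!!\,\underline{\omega}B(\bx').$$
Both of these are generalized partial-slice functions, induced by the stem functions $(A,0)$ and $(0,B)$ respectively.

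\textbf{Step 2.} We show harmonicity of the two parts. The key observation is that $\bx\mapsto\bx_\diamond=\bx_p-\underline{\bx}_q$ is an orthogonal reflection of $M\cong\mathbb{R}^{m+1}$ with real coefficients, and $\Omega$ is invariant under it because it is p-symmetric. Hence $\bx\mapsto h(\bx_\diamond)$ is harmonic whenever $h$ is, since the Laplacian has real coefficients and commutes with orthogonal changes of variables. Therefore $\mathcal{PE}[h]=\frac12(h+h\circ\diamond)$ and $\mathcal{PO}[h]=\frac12(h-h\circ\diamond)$ are harmonic on $\Omega$, and so lie in $\mathcal{AH}(\Omega,\mathbb{A})$.

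As an alternative check, and to exhibit the CK-data, Corollary \ref{GCK-HGCK} together with Theorem \ref{CK-Fueter-relation-M} gives $h=\gamma_q\,GCK[\Delta_{\bx_p}^{\frac{q-1}{2}}f_0]$. Consequently
$$\mathcal{PE}[h]=\gamma_q\,HGCK\big[\Delta_{\bx_p}^{\frac{q-1}{2}}f_0,0\big],\qquad \mathcal{PO}[h]=\frac{\gamma_q}{q}\,HGCK\big[0,D_{\bx_p}\Delta_{\bx_p}^{\frac{q-1}{2}}f_0\big],$$
using the splitting in Remark \ref{HGCK-two-parts}. Both are manifestly in $\mathcal{AH}$ by Theorem \ref{Harm-generalized-CK-extension}.

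\textbf{Main obstacle.} The delicate point is the domain and the behaviour on $\mathbb{R}^{p+1}$. We must make sure that $\Omega$, built as in the proof of Theorem \ref{slice-Cauchy-Kovalevsky-extension}, is a p-symmetric slice domain of full dimension $m+1$ on which the normally convergent series may be differentiated termwise. We must also confirm that the limits in Remark \ref{0-lim} make $A,B$ and $h$ well defined and continuous at $r=0$. I would handle both by invoking the Whitney representation from the proof of Proposition \ref{GSR-Harmonic}, which writes $A=G_1(\bx_p,r^2)$ and $B=rG_2(\bx_p,r^2)$ with $G_1,G_2$ real analytic.
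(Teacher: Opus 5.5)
Your proposal is correct. Its setup and Step 1 match the paper's proof: you write $CK[f_0]=F_1+\underline{\omega}F_2$ via Theorem \ref{relation-GSR-GSM}(ii) and use Theorem \ref{Fueter-theorem} to get $h=(q-1)!!\,(A+\underline{\omega}B)$. You also make explicit the even-odd parity of $(A,B)$, which the paper leaves implicit when it writes $h\in\mathcal{AM}(\Omega,\mathbb{A})$.

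The real difference is in how you prove that the two parts are harmonic. The paper applies Lemma \ref{Fueter-Sce-Qian-lemma-1} to $h$ and obtains
$\Delta_{\bx'}A+(q-1)\frac{1}{r}\partial_{r}A+\underline{\omega}\big(\Delta_{\bx'}B+(q-1)\partial_{r}\frac{1}{r}B\big)=0$.
It separates the two brackets by replacing $\underline{\omega}$ with $-\underline{\omega}$. It then identifies the brackets with $\Delta_{\bx}A$ and $\Delta_{\bx}(\underline{\omega}B)$ through (\ref{relation-harm-1}) and (\ref{relation-harm-2}).

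You instead use the map $\bx\mapsto\bx_\diamond$, which flips the signs of $x_{p+1},\dots,x_m$. It is an orthogonal involution, not a single reflection when $q>1$, but that does not affect the argument. The map preserves $\Omega$ because $-\underline{\omega}\in\mathbb{S}$, and it commutes with the real-coefficient operator $\Delta_{\bx}$. Hence $\mathcal{PE}$ and $\mathcal{PO}$ preserve harmonicity of any $C^2$ function on a p-symmetric set.

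The paper's ``$\pm\underline{\omega}$'' step is exactly this reflection, written at the level of the stem functions. Your version is shorter and needs only that $h$ is $C^2$ and harmonic. It avoids Lemma \ref{Fueter-Sce-Qian-lemma-1} and the $r\to0$ limit conventions of Remark \ref{0-lim}. The paper's version gives extra information: the stem-level equations $\Delta_{\bx'}A+(q-1)\frac{1}{r}\partial_{r}A=0$ and $\Delta_{\bx'}B+(q-1)\partial_{r}\frac{1}{r}B=0$.

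Your ``alternative check'' via $HGCK$ is not needed for this statement; it is essentially Theorem \ref{CK-Fueter-relation-M-H}. It is not circular, since that theorem rests only on Theorem \ref{CK-Fueter-relation-M} and Corollary \ref{GCK-HGCK}.
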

\begin{proof}
Let $f_{0}\in C^{\omega}(\Omega_0, \mathbb{A})$.  By Theorem \ref{slice-Cauchy-Kovalevsky-extension}, $f=CK[f_0]$ is   generalized partial-slice monogenic   in an $(m+1)$-dimensional p-symmetric slice domain $\Omega$ in $M$ with $\Omega_0\subset \Omega$.
From    Theorem  \ref{relation-GSR-GSM} (ii), we can set $f(\bx)=F_1(\bx')+\underline{\omega} F_2(\bx')$ and by
Theorem \ref{Fueter-theorem}, it holds  that for odd $q\in \mathbb{N}$,
 \begin{eqnarray*}
 \Delta_{\bx}^{\frac{q-1}{2}}f(\bx)=(q-1)!! (A(\bx')+\underline{\omega} B(\bx'))\in \mathcal{AM}(\Omega, \mathbb{A}),
\end{eqnarray*}
 where $A(\bx')$ and $B(\bx')$ are given by (\ref{q-1-formula-A-B}).\\
Hence, by Lemma  \ref{d-dbar-laplace}, we have
\begin{eqnarray*}
  \Delta_{\bx}^{\frac{q+1}{2}}f(\bx) = \Delta_{\bx} (\Delta_{\bx}^{\frac{q-1}{2}}f(\bx))  = 0.
\end{eqnarray*}
 Then by Lemma \ref{Fueter-Sce-Qian-lemma-1}, we get
 \begin{eqnarray*}
 	\frac{ \Delta_{\bx}^{\frac{q+1}{2}}f(\bx)}{(q-1)!!}=
 	\Delta_{\bx'} A(\bx')+ (q-1)   (\frac{1}{r}\partial_{r}) A(\bx')
 	+ \underline{\omega} (\Delta_{\bx'}B(\bx')+  (q-1) (\partial_{r}\frac{1}{r}) B(\bx') )=0.
 \end{eqnarray*}
By the arbitrariness of $\underline{\omega}\in\mathbb{S}$ (for example by taking $\pm\underline{\omega}$),  we deduce that
\begin{eqnarray*}
 \Delta_{\bx'} A(\bx')+ (q-1)   (\frac{1}{r}\partial_{r}) A(\bx') =0,
\end{eqnarray*}
and
\begin{eqnarray*}
 \Delta_{\bx'}B(\bx')+  (q-1) (\partial_{r}\frac{1}{r}) B(\bx')  =0.
\end{eqnarray*}
Hence, from (\ref{relation-harm-1}) and (\ref{relation-harm-2}),
$$\Delta_{\bx} A(\bx')=\Delta_{\bx}(\underline{\omega} B(\bx'))=0,$$
which, recalling (\ref{spherical-value-e-o}) and (\ref{spherical-value-derivative}),  implies that $\mathcal{PE}[\Delta_{\bx}^{\frac{q-1}{2}} f]$ and $\mathcal{PO}[\Delta_{\bx}^{\frac{q-1}{2}}f]$ are both in $ \mathcal{AH}(\Omega, \mathbb{A})$.
The proof is complete.
  \end{proof}

\begin{theorem}\label{CK-Fueter-relation-M-H}
Let $\Omega\subseteq M$ be a p-symmetric slice domain and $f:\Omega\rightarrow \mathbb{A}$ be a  generalized partial-slice monogenic function.    Then,  for odd $q\in \mathbb{N}$,
we have
 $$\mathcal{PO} \circ  \Delta_{\bx}^{\frac{q-1}{2}} f(\bx)
 =  \frac{\gamma_q }{q}HGCK [0, \Delta_{\bx_{p}}^{\frac{q-1}{2}} D_{\bx_{p}}f(\bx_{p}) ],$$
and
 $$\mathcal{PE} \circ  \Delta_{\bx}^{\frac{q-1}{2}} f(\bx)
 =\gamma_q    HGCK [  \Delta_{\bx_{p}}^{\frac{q-1}{2}} f(\bx_{p}),0].$$
which   give the  following commutative diagrams
$$\xymatrix{
 \mathcal{A}(\Omega_{0},\mathbb{A})  \ar[d]_{\frac{\gamma_q}{q}(0,  \Delta_{\bx_{p}}^{\frac{q-1}{2}} D_{\bx_{p}} )}   \ar[r]^{CK} &
 \mathcal{GSM}(\Omega,\mathbb{A}) \ar[d]^{\mathcal{PO} \circ  \Delta_{\bx}^{\frac{q-1}{2}} }
 \\
 ( \mathcal{A}(\Omega_{0},\mathbb{A}) )^{2}  \ar[d]_{q   P_2}  \ar[r]^{HGCK} & \mathcal{AH}(\Omega,\mathbb{A})  \ar[d]^{\overline{D}_{\bx}}
   \\
  \mathcal{A}(\Omega_{0},\mathbb{A})   \ar[r]^{GCK} & \mathcal{AM}(\Omega,\mathbb{A}),}
 \xymatrix{
 \mathcal{A}(\Omega_{0},\mathbb{A})  \ar[d]_{ \gamma_q (  \Delta_{\bx_{p}}^{\frac{q-1}{2}}, 0)}   \ar[r]^{CK} &
 \mathcal{GSM}(\Omega,\mathbb{A}) \ar[d]^{\mathcal{PE} \circ  \Delta_{\bx}^{\frac{q-1}{2}} }
 \\
 ( \mathcal{A}(\Omega_{0},\mathbb{A}) )^{2}  \ar[d]_{ P_1}  \ar[r]^{HGCK} & \mathcal{AH}(\Omega,\mathbb{A})  \ar[d]^{\overline{D}_{\bx}}
   \\
  \mathcal{A}(\Omega_{0},\mathbb{A})   \ar[r]^{GCK \circ \overline{D}_{\bx_p} } & \mathcal{AM}(\Omega,\mathbb{A}),} $$
  where $P_k$ is the projection of $k$-th component with $k=1,2$,   $\gamma_q$ is given by  (\ref{gamma-q}). \end{theorem}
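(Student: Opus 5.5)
Our plan is to combine Theorem \ref{CK-Fueter-relation-M} with Corollary \ref{GCK-HGCK} and Remark \ref{HGCK-two-parts}. By Theorem \ref{CK-Fueter-relation-M}, for $f\in\mathcal{GSM}(\Omega,\mathbb{A})$ and odd $q$ we have
$$\Delta_{\bx}^{\frac{q-1}{2}} f=\gamma_q\, GCK[g_0],\qquad g_0:=\Delta_{\bx_p}^{\frac{q-1}{2}}f(\bx_p).$$
By the first identity of Corollary \ref{GCK-HGCK},
$$GCK[g_0]=HGCK[g_0,0]+\frac1q HGCK[0,D_{\bx_p}g_0].$$
The operators $\mathcal{PE}$ and $\mathcal{PO}$ are real linear. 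Remark \ref{HGCK-two-parts} identifies $HGCK[A_0,0]$ as the partial even part and $HGCK[0,A_1]$ as the partial odd part: in the series (\ref{Cauchy-Kovalevska-extension-sum-1-1-harmonic}) the first sum contains only even powers $\underline{\bx}_q^{2k}$, which are invariant under $\underline{\bx}_q\mapsto-\underline{\bx}_q$, and the second contains only odd powers. Applying $\mathcal{PE}$ and $\mathcal{PO}$ therefore gives
$$\mathcal{PE}\circ\Delta_{\bx}^{\frac{q-1}{2}}f=\gamma_q HGCK[g_0,0],\qquad \mathcal{PO}\circ\Delta_{\bx}^{\frac{q-1}{2}}f=\frac{\gamma_q}{q}HGCK[0,D_{\bx_p}g_0].$$

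It then remains to write $D_{\bx_p}g_0=\Delta_{\bx_p}^{\frac{q-1}{2}}D_{\bx_p}f(\bx_p)$. This holds because $\Delta_{\bx_p}$ has real scalar coefficients and commutes with each $v_i\partial_{x_i}$. With that, both displayed formulas of the statement follow.

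For the diagrams, the upper squares are exactly these two formulas, with $CK$ sending $f_0=f|_{\Omega_0}$ to $f$ by Theorem \ref{CK-slice-monogenic} and the identity theorem (Theorem \ref{Identity-theorem}). For the lower squares we use Proposition \ref{D-GCK-HGCK}:
$$\overline{D}_{\bx}\circ HGCK[0,h]=q\,GCK[h]=GCK[qP_2(0,h)],\qquad \overline{D}_{\bx}\circ HGCK[h,0]=GCK[\overline{D}_{\bx_p}h]=(GCK\circ\overline{D}_{\bx_p})[P_1(h,0)].$$
These are precisely the commutativity of the bottom squares. We also check that the maps land in the stated classes: $HGCK$ values lie in $\mathcal{AH}$ by Remark \ref{HGCK-two-parts}, and $GCK$ values lie in $\mathcal{AM}$ by Remark \ref{axial}.

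The only delicate point is that all these series are defined on a common p-symmetric slice domain. Theorem \ref{CK-Fueter-relation-M} already asserts the identity on $\Omega$. The HGCK and GCK series converge near $\Omega_0$ by the bounds in Theorem \ref{generalized-CK-extension}, so the identities hold there and extend as identities among the functions considered on $\Omega$. Beyond this, the argument is bookkeeping with linearity and the parity splitting.
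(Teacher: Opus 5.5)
Your proposal is correct and follows essentially the same route as the paper: you combine Theorem \ref{CK-Fueter-relation-M} with the splitting of Corollary \ref{GCK-HGCK} under $\mathcal{PE}$ and $\mathcal{PO}$ to get the two formulas, and you use Proposition \ref{D-GCK-HGCK} for the bottom squares of the diagrams. You also make explicit two points the paper leaves implicit, namely the commutation $D_{\bx_p}\Delta_{\bx_p}^{\frac{q-1}{2}}=\Delta_{\bx_p}^{\frac{q-1}{2}}D_{\bx_p}$ and the even/odd parity of the two HGCK series.
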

\begin{proof}
 Let $f\in \mathcal{GSM}(\Omega,\mathbb{A})$, where $\Omega $ is a p-symmetric slice domain in $M$.
From Theorem \ref{CK-Fueter-relation-M} and Proposition  \ref{GCK-HGCK}, it holds that
  $$\mathcal{PO} \circ  \Delta_{\bx}^{\frac{q-1}{2}} f(\bx)
 =\gamma_q \mathcal{PO} \circ  GCK [ \Delta_{\bx_{p}}^{\frac{q-1}{2}}f(\bx_{p}) ]
 =  \frac{\gamma_q }{q}HGCK [0, \Delta_{\bx_{p}}^{\frac{q-1}{2}} D_{\bx_{p}}f(\bx_{p}) ],$$
and
 $$\mathcal{PE} \circ  \Delta_{\bx}^{\frac{q-1}{2}} f(\bx)
 =\gamma_q \mathcal{PE} \circ  GCK [ \Delta_{\bx_{p}}^{\frac{q-1}{2}}f(\bx_{p}) ]
 = \gamma_q   HGCK [  \Delta_{\bx_{p}}^{\frac{q-1}{2}} f(\bx_{p}),0].$$
Combining these two formulas above  with Proposition  \ref{D-GCK-HGCK},  we can get the desired  commutative diagrams.  The proof is complete.
\end{proof}

\begin{theorem}\label{CK-Fueter-relation-H}
Let $\Omega\subseteq M$ be a p-symmetric slice domain and $f:\Omega\rightarrow \mathbb{A}$ be a  generalized partial-slice monogenic function.  Then,  for odd $q\in \mathbb{N}$, we have
\begin{equation}\label{HGCK-D}
D_{\bx} \Delta_{\bx}^{\frac{q-3}{2}}f(\bx) =\gamma_q  HGCK[\Delta_{\bx_p}^{ \frac{q-3}{2}}  D_{\bx_p} f(\bx_p),0],\end{equation}
 which gives the   commutative diagram
$$\xymatrix{
 \mathcal{A}(\Omega_{0},\mathbb{A})  \ar[d]_{(\gamma_q \Delta_{\bx_{p} }^{\frac{q-3}{2}}\circ D_{\bx_p},0)}   \ar[r]^{CK} &
 \mathcal{GSM}(\Omega,\mathbb{A}) \ar[d]^{\Delta_{\bx}^{\frac{q-3}{2}} \circ D_{\bx} }
 \\
 ( \mathcal{A}(\Omega_{0},\mathbb{A}) )^{2}  \ar[d]_{\overline{D}_{\bx_p}\circ P_1 }  \ar[r]^{HGCK} & \mathcal{AH}(\Omega, \mathbb{A})  \ar[d]^{\overline{D}_{\bx}}
   \\
  \mathcal{A}(\Omega_{0},\mathbb{A})   \ar[r]^{GCK} & \mathcal{AM}(\Omega,\mathbb{A}),}
  $$
where   $\gamma_q$ is given by  (\ref{gamma-q}) and $P_1$ is the projection of the first component. \end{theorem}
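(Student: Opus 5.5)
The idea is to identify $g:=D_{\bx}\Delta_{\bx}^{\frac{q-3}{2}}f$ explicitly via Lemma \ref{Fueter-Sce-Qian-lemma-k}, show that it is a harmonic function with no odd part, and then pin down its restriction to $\mathbb{R}^{p+1}$. The uniqueness in Theorem \ref{Harm-generalized-CK-extension} then forces $g$ to equal the stated harmonic CK-extension.

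\textbf{Step 1: structure of $g$.} Since $\Omega$ is a p-symmetric slice domain, Theorem \ref{relation-GSR-GSM} gives $f=F_1+\underline{\omega}F_2\in\mathcal{GSR}(\Omega_D,\mathbb{A})$. Apply (\ref{d-k}) with $k=\frac{q-3}{2}$ (for $q\ge 3$; the case $q=1$ would need a separate convention and I would comment on it). This gives
$$g(\bx)=-C_q\big(\tfrac{q-1}{2}\big)\frac{B_{\frac{q-3}{2}}(\bx')}{r}.$$
Here $B_{\frac{q-3}{2}}$ is odd in $r$, so $g$ is even in $r$ and independent of $\underline{\omega}$. Hence $g\in\mathcal{GS}$ with $\mathcal{PO}[g]=0$.

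\textbf{Step 2: harmonicity.} By Lemma \ref{d-dbar-laplace},
$$\Delta_{\bx}g=D_{\bx}\Delta_{\bx}^{\frac{q-1}{2}}f=0,$$
using Theorem \ref{Fueter-theorem}. So $g\in\mathcal{AH}(\Omega,\mathbb{A})$. By the series representation in Theorem \ref{Harm-generalized-CK-extension}, such a $g$ equals $HGCK[A_0,A_1]$ with $A_0=g|_{\mathbb{R}^{p+1}}$. The vanishing of the odd part gives $A_1=0$, via Remark \ref{HGCK-two-parts}, equivalently $D_{\underline{\bx}_q}g|_{\underline{\bx}_q=0}=0$.

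\textbf{Step 3: compute $A_0$.} This is the main computation. I would go through the slice CK formula rather than through $B$ directly. By Theorem \ref{CK-Fueter-relation-M}, applied to the order $\frac{q-3}{2}$ computation, or more simply by expanding $F_2$ from Theorem \ref{slice-Cauchy-Kovalevsky-extension}, we have
$$F_2(\bx')=\sum_k \frac{r^{2k+1}}{(2k+1)!}(-\Delta_{\bx_p})^k D_{\bx_p}f(\bx_p).$$
Then compute $\lim_{r\to0}\frac1r(\partial_r\frac1r)^{j}F_2$ with $j=\frac{q-3}{2}$. Only the term $k=j$ survives. Applying $(\partial_r\frac1r)^j$ to $r^{2j+1}$ gives the constant $(2j-1)!!$ times $r$, so
$$A_0=-C_q\big(\tfrac{q-1}{2}\big)\frac{(2j-1)!!}{(2j+1)!}(-1)^j\Delta_{\bx_p}^{j}D_{\bx_p}f(\bx_p).$$
With $C_q(\frac{q-1}{2})=(q-1)!!$ and $2j+1=q-2$, the constant simplifies to $(-1)^{\frac{q-1}{2}}\frac{(q-1)!!}{(q-2)!!}=\gamma_q$, since $(q-4)!!/(q-2)!=1/(q-2)!!$. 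Checking these double-factorial manipulations is the fiddly part, together with justifying the limits via Remark \ref{0-lim}. This yields (\ref{HGCK-D}).

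\textbf{Step 4: the diagram.} The top square is (\ref{HGCK-D}) with $f=CK[f_0]$. The bottom square is Proposition \ref{D-GCK-HGCK}:
$$\overline{D}_{\bx}HGCK[h,0]=GCK[\overline{D}_{\bx_p}h].$$
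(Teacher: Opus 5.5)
Your route is essentially the paper's: apply (\ref{d-k}) with $k=\frac{q-3}{2}$, insert the slice CK expansion of $F_2$, read off $g|_{\mathbb{R}^{p+1}}$ for $g=D_{\bx}\Delta_{\bx}^{\frac{q-3}{2}}f$, show the second initial datum vanishes, and invoke uniqueness in Theorem \ref{Harm-generalized-CK-extension}. The differences from the paper are minor, and mostly improvements:
\begin{itemize}
\item You obtain $A_1=0$ from parity: $B_{\frac{q-3}{2}}/r$ is even in $r$ and independent of $\underline{\omega}$. The paper instead differentiates its series with $D_{\underline{\bx}_q}$.
\item You justify harmonicity of $g$ through Theorem \ref{Fueter-theorem}, which the paper only asserts.
\item You close the bottom square with Proposition \ref{D-GCK-HGCK}, rather than Theorem \ref{CK-Fueter-relation-M} plus $\overline{D}_{\bx}D_{\bx}=\Delta_{\bx}$. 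Both work.
\end{itemize}
One clarification for Step 2. The uniqueness in Theorem \ref{Harm-generalized-CK-extension} only applies to functions already of the form $\sum_k\underline{\bx}_q^{k}A_k(\bx_p)$. Your Step 3 expansion supplies this: dividing the series for $B_j$ by $r$ gives a power series in $r^2=-\underline{\bx}_q^2$. Say so, instead of appealing to a ``series representation''.

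The real problem is Step 3: two arithmetic errors that happen to cancel.
\begin{itemize}
\item $(\partial_r\frac{1}{r})^{j}r^{2j+1}=2^{j}j!\,r=(2j)!!\,r$, not $(2j-1)!!\,r$. Already for $j=1$, $\partial_r(r^2)=2r$.
\item The identity $(q-4)!!/(q-2)!=1/(q-2)!!$ is false: for $q=5$ the left side is $\frac16$ and the right side is $\frac13$. The correct identity is $(q-3)!!/(q-2)!=1/(q-2)!!$, since $(q-2)!=(q-2)!!\,(q-3)!!$.
\end{itemize}
Taken literally, your displayed formula for $A_0$ gives the constant $\frac43$ when $q=5$, not $\gamma_5=\frac83$. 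With the corrected factor, using $2j+1=q-2$ and $C_q(\frac{q-1}{2})=(q-1)!!$, you get
$$A_0=-(q-1)!!\,\frac{(2j)!!}{(2j+1)!}\,(-1)^{j}\Delta_{\bx_p}^{j}D_{\bx_p}f(\bx_p)=(-1)^{j+1}\frac{(q-1)!!}{(q-2)!!}\,\Delta_{\bx_p}^{j}D_{\bx_p}f(\bx_p)=\gamma_q\,\Delta_{\bx_p}^{\frac{q-3}{2}}D_{\bx_p}f(\bx_p).$$
With these two lines fixed, the argument is complete for $q\ge 3$, which the statement tacitly requires since $\Delta_{\bx}^{\frac{q-3}{2}}$ must make sense.
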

 \begin{proof}
 Let $f\in \mathcal{GSM}(\Omega,\mathbb{A})$, where $\Omega $ is a p-symmetric slice domain in $M$. By  Theorems \ref{slice-Cauchy-Kovalevsky-extension} and \ref{Identity-theorem}, it holds that
$$f(\bx)=\sum_{k=0}^{+\infty} \frac{r^{2k}}{(2k)!}  (-\Delta_{\bx_p})^{k} f(\bx_p)
+\underline{\omega} \sum_{k=0}^{+\infty} \frac{r^{2k+1}}{(2k+1)!}(-\Delta_{\bx_p})^{k}(D_{\bx_p}f(\bx_p)).$$
 From the formula
  $$(\partial_{r}\frac{1}{r})^{s} r^{2k+1}=2^{s} \frac{k!}{(k-s)!}  r^{2k-2s+1}, \quad s\geq k,  $$
we  get
 \begin{eqnarray*}
 & & (\partial_{r}\frac{1}{r})^{s}   \sum_{k=0}^{+\infty} \frac{r^{2k+1}}{(2k+1)!}(-\Delta_{\bx_p}) ^{k}  D_{\bx_p} f(\bx_p)\\
&=&  2^{s} \sum_{k=s}^{+\infty} \frac{ k!   r^{2k-2s+1}}{(k-s)!(2k+1)!}(-\Delta_{\bx_p}) ^{k}  D_{\bx_p} f(\bx_p) \\
&=& 2^{s} \sum_{k=0}^{+\infty} \frac{ (k+s)!   r^{2k+1}}{ k!(2k+2s+1)!}(-\Delta_{\bx_p}) ^{k+s}  D_{\bx_p} f(\bx_p).
\end{eqnarray*}
Hence,  by Lemma  \ref{Fueter-Sce-Qian-lemma-k} for $s=\frac{q-3}{2}$, we get
\begin{equation}\label{D-k}
D_{\bx} \Delta_{\bx}^{\frac{q-3}{2}}f(\bx)= -C_{q}(\frac{q-1}{2}) 2^{\frac{q-3}{2}} \sum_{k=0}^{+\infty} \frac{ (k+\frac{q-3}{2})!   r^{2k}}{ k!(2k+q-2 )!}(-\Delta_{\bx_p}) ^{k+\frac{q-3}{2}}  D_{\bx_p} f(\bx_p),  \end{equation}
which yields
 \begin{eqnarray} \label{D-k-itself-k}
 D_{\bx} \Delta_{\bx}^{\frac{q-3}{2}}f(x) \mid_{\underline{\bx}_{q}=0}
&=&  -C_{q}(\frac{q-1}{2}) 2^{\frac{q-3}{2}} \frac{ ( \frac{q-3}{2})!   }{  ( q-2 )!}(-\Delta_{\bx_p}) ^{ \frac{q-3}{2}}  D_{\bx_p} f(\bx_p) \notag \\
&=& (-1)^{\frac{q-1}{2}} 2^{q-2} \frac{( \frac{q-1}{2})!  (\frac{q-3}{2})!}{(q-2)!}\Delta_{\bx_p}^{ \frac{q-3}{2}}  D_{\bx_p} f(\bx_p) .
\end{eqnarray}
Observing  that
$$ 2^{q-2} \frac{( \frac{q-1}{2})!  (\frac{q-3}{2})!}{(q-2)!}=2^{q-3}  (q-1) \frac{  \Big((\frac{q-3}{2})!\Big)^{2}}{(q-2)!}
= \frac{(q-1)!!}{(q-2)!!} ,$$
we can rewrite  (\ref{D-k-itself-k}) as
 \begin{equation}\label{D-k-itself-final-0}
D_{\bx} \Delta_{\bx}^{\frac{q-3}{2}}f(\bx)  \mid_{\underline{\bx}_{q}=0} =\gamma_q  \Delta_{\bx_p}^{ \frac{q-3}{2}}  D_{\bx_p} f(\bx_p),   \end{equation}
where $\gamma_q$ is given by  (\ref{gamma-q}).

Noticing  that (\ref{CK-formular-3}) in Lemma \ref{CK-formular-Four} and applying    the operator $D_{\underline{\bx}_q} $ to  (\ref{D-k})
$$D_{\bx} \Delta_{\bx}^{\frac{q-3}{2}}f(\bx)= (-1) ^{\frac{q-1}{2}}C_{q}(\frac{q-1}{2}) 2^{\frac{q-3}{2}} \sum_{k=0}^{+\infty} \frac{ (k+\frac{q-3}{2})!   \underline{\bx}_q^{2k}}{ k!(2k+q-2 )!} \Delta_{\bx_p}  ^{k}  (D_{\bx_p} f(\bx_p)),$$  we get
$$D_{\underline{\bx}_q}(D_{\bx} \Delta_{\bx}^{\frac{q-3}{2}}f(\bx))= (-1) ^{\frac{q+1}{2}}C_{q}(\frac{q-1}{2}) 2^{\frac{q-1}{2}} \sum_{k=1}^{+\infty} \frac{ (k+\frac{q-3}{2})!   \underline{\bx}_q^{2k-1}}{ (k-1)!(2k+q-2 )!} \Delta_{\bx_p}  ^{k}  (D_{\bx_p} f(\bx_p)),$$
which means
\begin{equation}\label{D-k-itself-D}
D_{\underline{\bx}_q} (D_{\bx} \Delta_{\bx}^{\frac{q-3}{2}}f(\bx)) \mid_{\underline{\bx}_{q}=0} = 0.\end{equation}
 From the harmonic generalized CK-extension (Theorem \ref{Harm-generalized-CK-extension}),  the   harmonic function $D_{\bx} \Delta_{\bx}^{\frac{q-3}{2}}f(\bx)$ is completely determined by both  its restriction to $\mathbb{R}^{p+1}$  and $D_{\underline{\bx}_q} (D_{\bx} \Delta_{\bx}^{\frac{q-3}{2}}f(\bx))$ restriction to $\mathbb{R}^{p+1}$. Hence, from (\ref{D-k-itself-final-0}) and  (\ref{D-k-itself-D}), we get
$$D_{\bx} \Delta_{\bx}^{\frac{q-3}{2}}f(\bx)=\gamma_q  HGCK [ \Delta_{\bx_p}^{ \frac{q-3}{2}}  D_{\bx_p} f(\bx_p),0 ](\bx).$$
Finally, combining    Theorem \ref{CK-Fueter-relation-M} and  Lemma \ref{d-dbar-laplace}, we can conclude the desired  commutative diagram.  The proof is complete.
\end{proof}
\begin{remark}
When $\mathbb A=\mathbb{R}_{0, m}$ with $M=\mathbb{R}^{m+1}$,  the slice monogenic version  of  Theorem \ref{CK-Fueter-relation-H} was obtained  by De Martino and  Guzm\'{a}n Ad\'{a}n  in \cite[Theorem 5.6]{De-Adan}.
\end{remark}


\section{Further remarks  and future directions of research}
 The investigations of functions with values in a real alternative $*$-algebra,  which are analytic in a suitable sense, are rather advanced in the case of slice regular functions but less developed for the classical monogenic functions. In this framework, it may be of interest to consider the more general case of functions which are poly-monogenic. Being in a non-associative setting the first question to address is a good definition of powers of an operator.
For any integer $k>1$, one may define the operator $D^{k}$ as
$$D^{k}f(x)=D(D^{k-1} f(x)), \quad f\in C^{k}(\Omega,\mathbb{A}),$$
i.e.,  recalling the notation in Section \ref{secFueterpoly},
$$D^{k}f(x)=\sum_{i_1,i_2,\ldots, i_k=0}^m  \Big(v_{i_1}v_{i_2}\ldots v_{i_k}\frac{\partial^k}{\partial_{x_{i_1}}\partial_{x_{i_2}}\ldots \partial_{x_{i_k}}}f(x) \Big)_{R}.$$
Using these notations,  we  can introduce poly-monogenic functions  over alternative $\ast$-algebras.
\begin{definition}[Poly-monogenic functions] \label{definition-poly-monogenic}
 Let $k\in \mathbb{N}\setminus \{0\}$ and $\Omega$ be an open set in $M$.    A function $f\in C^{k}(\Omega,\mathbb{A})$  is called left  poly-monogenic   of order $k$ if
$$ D^{k} f(x)= 0, \quad x\in \Omega. $$
\end{definition}
Denote by $\mathcal {PM}_k^{L}(\Omega, \mathbb{A})$ (or $\mathcal {PM}_k(\Omega, \mathbb{A})$ for short)  the function class of  all  left   monogenic functions   of order $k$.
When $k=1$, $\mathcal {PM}_k(\Omega, \mathbb{A})=\mathcal {M}(\Omega, \mathbb{A})$  coincides with the set of monogenic functions.

More generally, given  an associative order $\otimes_{(k+1)}$, one may  define
\begin{equation}\label{Dk}
(\underbrace{D D \cdots D}_{k}  f(x))_{\otimes_{(k+1)}}=
\sum_{i_1,i_2,\ldots, i_k=0}^m  \Big(v_{i_1}v_{i_2}\ldots v_{i_k} \big(\frac{\partial^k f}{\partial {x_{i_1}}\partial {x_{i_2}}\ldots \partial {x_{i_n}}}(x)\big) \Big)_{\otimes_{(k+1)}}.\end{equation}
Note that, for $f\in C^{k}(\Omega,\mathbb{A}) $, on the right hand side of (\ref{Dk}) we can group all the derivatives of the form
  $\partial^{\mathrm{k}}f=\frac{\partial^{k} f}{\partial_{x_{0}}^{k_0}\partial_{x_{1}}^{k_1}\cdots\partial_{x_{m}}^{k_m} }$ with $\mathrm{k}=(k_0,k_1,\ldots,k_{m})\in \mathbb{N}^{m+1}$ and $k=|\mathrm{k}|$ as
  \begin{eqnarray}\label{sum-two}
 && \sum_{i_1,i_2, \ldots, i_k=0}^m \Big(v_{i_1}v_{i_2}\ldots v_{i_k} \big(\frac{\partial^k f}
 {\partial {x_{i_1}}\ldots \partial {x_{i_k}}} (x) \big) \Big)_{\otimes_{(k+1)}}\notag
 \\
 &=&\sum_{|\mathrm{k}|=k}  \sum_{(i_1,i_2, \ldots, i_k)\in \sigma(\overrightarrow{\mathrm{k}})  }
  (v_{i_1}  v_{i_2} \cdots  v_{i_k}  \partial^{\mathrm{k}} f(x))_{\otimes_{(k+1)}},
\end{eqnarray}
where $\sigma(\overrightarrow{\mathrm{k}})$ is as in Definition \ref{definition-Fueter}.

 Hence,  by Proposition \ref{no-order}, $(\underbrace{D D \cdots D}_{k}  f(x))_{\otimes_{(k+1)}}$ does not depend on the   associative order $\otimes_{(k+1)}$ and then we can write (\ref{Dk}) as $D^k f(x)$ as there is  no ambiguity.

\begin{example}\label{k-Cauchy-kernel-example}
Let $\Omega=M\setminus \{0\}$ and define, for $k=1,2,\ldots$,
$$E^{[k]}(x)=\frac{1}{\sigma_{m}}\frac{1}{(k-1)!}\frac{\overline{x}x_{0}^{k-1}}{|x|^{m+1}}, \quad x \in\Omega,$$
where $\sigma_{m}=2\frac{\Gamma^{m+1}(\frac{1}{2}) }{\Gamma (\frac{m+1}{2})} $ is the surface area of the unit ball in $\mathbb{R}^{m+1}$. \\
One may readily check that
$$D^{n}E^{[k]}=E^{[k]}D^{n} =E^{[k-n]}, \quad n=1,2,\ldots,k,$$
and
$$D^{k} E^{[k]}=E^{[k]}D^{k} =0,$$
which  shows that $E^{[k]} \in \mathcal {PM}_{k}(\Omega, \mathbb{A})$.
\end{example}

\begin{remark}
 When $ \mathbb{A}=\mathbb{O}=M$,  the operator $D$ is power-associative in the sense of \cite[Definition 4.5]{HRSX};  see also \cite[Proposition 8.3]{HRSX}.
Now we give a direct proof of this fact by showing  that $D^k $ right octonionic para-linear for all $k\in\mathbb{N}$. First of all, we observe that  (\ref{sum-two}) gives that, for all $a\in \mathbb{O}$,
 \[
  \begin{split}
 D^k (f(x)a)=& \sum_{|\mathrm{k}|=k}  \sum_{(i_1,i_2, \ldots, i_k)\in \sigma(\overrightarrow{\mathrm{k}})  }
  (v_{i_1}  v_{i_2} \cdots  v_{i_k}  (\partial^{\mathrm{k}}f(x)a))_{L}\\
  =& \sum_{|\mathrm{k}|=k} \Big( \sum_{(i_1,i_2, \ldots, i_k)\in \sigma(\overrightarrow{\mathrm{k}})  }
  (v_{i_1}  v_{i_2} \cdots  v_{i_k})_{L} \Big)(\partial^{\mathrm{k}}f(x)a).\end{split}
  \]
  where $\sigma(\overrightarrow{\mathrm{k}})$ is as in Definition \ref{definition-Fueter}.

Hence, recalling the fundamental formulas
$${\rm{Re}}\,( a+b)={\rm{Re}}\,  a +{\rm{Re}}\,  b, \quad {\rm{Re}}\,((ab)c)={\rm{Re}}\, (a(bc)), \quad a, b, c\in \mathbb{O},$$
 we have, for $f\in C^{k}(\Omega,\mathbb{O})$ and  $a\in \mathbb{O}$,
    \[
  \begin{split}
{\rm{Re}}\, (D^k (f(x)a))
  =& \sum_{|\mathrm{k}|=k}{\rm{Re}}\, \Big( \Big( \sum_{(i_1,i_2, \ldots, i_k)\in \sigma(\overrightarrow{\mathrm{k}})  }
  (v_{i_1}  v_{i_2} \cdots  v_{i_k})_{L} \Big)(\partial^{\mathrm{k}}f(x)a) \Big)\\
   =& \sum_{|\mathrm{k}|=k}{\rm{Re}}\, \Big( \Big(\Big(\sum_{(i_1,i_2, \ldots, i_k)\in \sigma(\overrightarrow{\mathrm{k}})  }
  (v_{i_1}  v_{i_2} \cdots  v_{i_k})_{L} \Big) \partial^{\mathrm{k}} f(x) \Big) a\Big)\\
  =& \sum_{|\mathrm{k}|=k}{\rm{Re}}\, \Big( \Big(\sum_{(i_1,i_2, \ldots, i_k)\in \sigma(\overrightarrow{\mathrm{k}})  }
  ( (v_{i_1}  v_{i_2} \cdots  v_{i_k})_{L} \partial^{\mathrm{k}} f(x)  )  \Big) a\Big)\\
   =& \sum_{|\mathrm{k}|=k}{\rm{Re}}\, \Big( \Big(\sum_{(i_1,i_2, \ldots, i_k)\in \sigma(\overrightarrow{\mathrm{k}})  }
  (v_{i_1}  v_{i_2} \cdots  v_{i_k}\partial^{\mathrm{k}} f(x))_{L}   \Big) a\Big)\\
  =& \sum_{|\mathrm{k}|=k}{\rm{Re}}\, \Big( \Big(\sum_{(i_1,i_2, \ldots, i_k)\in \sigma(\overrightarrow{\mathrm{k}})  }
  (v_{i_1}  v_{i_2} \cdots  v_{i_k}\partial^{\mathrm{k}} f(x) )_{L} \Big) a\Big)\\
= &{\rm{Re}}\, ((D^kf(x)) a),  \end{split}
  \]
which allows us to conclude that all  operators   $D^k$ are right octonionic para-linear for all $k$.
 \end{remark}

\begin{remark}
Note  that  (\ref{sum-two}) relies on the fact that the partial derivatives  $\partial_{x_{s}} $  commute each other  on $\mathcal{C}^k(\Omega,\mathbb{A})$  for $s=0,1,\ldots,m$, which still holds for   Dunkl operators  associated with the Coxeter group  and the multiplicity function, i.e.
  $\mathfrak{D}_s \mathfrak{D}_t =\mathfrak{D}_t \mathfrak{D}_s $. For the precise definition of $\mathfrak{D}_s$, see for example \cite{Dunkl}. Hence, we can  define poly-Dunkl-monogenic   functions which are in the kernel $\mathfrak{D}^{k}$, where $\mathfrak{D}$ is the so-called Dunkl-Dirac operator given by
 $$\mathfrak{D}f(x)=\sum _{s=0}^{m}v_{s} \mathfrak{D}_sf(x)= \sum _{s =0}^{m} \sum _{t=0}^{d}   v_{s}v_{t} \mathfrak{D}_sf_t(x), \quad x\in \Omega, $$
 where $f=\sum_{t=0}^{d}  v_{t}   f_{t} \in C^{1}(\Omega,\mathbb{A})$.
  \end{remark}
See  \cite{CKR-06} for $k=1$ and $\mathbb A=\mathbb{R}_{0, m}$ with $M=\mathbb{R}^{m+1}$.
Besides,  we  can also define  the class  of   generalized partial-slice poly-monogenic functions   (or poly generalized  partial-slice monogenic functions).
\begin{definition}[Generalized  partial-slice poly-monogenic functions] \label{definition-poly-partial-slice-monogenic}
 Let $k\in \mathbb{N}\setminus \{0\}$ and $\Omega$ be an open set in $M$. A function $f :\Omega \rightarrow \mathbb{A}$ is called left generalized  partial-slice poly-monogenic   of type $(p,q)$ and order $k$ if, for all $\underline{\omega} \in \mathbb S$, its restriction $f_{\underline{\omega}}\in C^{k}(\Omega_{\underline{\omega}}, \mathbb{A})$   satisfies
$$D_{\underline{\omega}}^{k}f_{\underline{\omega}}(\bx_p+r\underline{\omega})=(D_{\bx_p}+\underline{\omega} \partial_{r})^{k} f_{\underline{\omega}}(\bx_p+r\underline{\omega})=0,$$
for all $\bx=\bx_p+r\underline{\omega} \in  \Omega_{\underline{\omega}}$.
 \end{definition}

To conclude the paper, we give an example of left (and right) generalized  partial-slice poly-monogenic   of type $(p,q)$ and order $k$. More properties of generalized partial-slice poly-monogenic functions over alternative $\ast$-algebras will be explored in a future work.
\begin{example}\label{k-Cauchy-kernel-example}
Let $\Omega=M\setminus \{0\}$ and define, for $k=1,2,\ldots$,
$$E^{[k]}(\bx)=\frac{1}{\sigma_{p+1}}\frac{1}{(k-1)!}\frac{\overline{\bx}x_{0}^{k-1}}{|\bx|^{p+2}}, \quad \bx \in\Omega,$$
where $\sigma_{p+1}=2\frac{\Gamma^{p+2}(\frac{1}{2}) }{\Gamma (\frac{p+2}{2})} $ is the surface area of the unit ball in $\mathbb{R}^{p+2}$.
\end{example}

\par
\noindent
\textbf{Author contributions}
All authors have contributed equally to all aspects of this manuscript and have reviewed its final draft.
\\
\textbf{Conflict of interest}
There is no financial or non-financial interests that are directly or indirectly related to the work submitted for publication.
\\
\textbf{Data availability}
Data sharing is not applicable to this article as no datasets were generated  during the current study.




\vskip 10mm
\end{document}